\tikzstyle{vertex}=[circle, draw, inner sep=0pt, minimum size=6pt]
\newcommand{\vertex}{\node[vertex]}
\newtheorem{theorem}{Theorem}[section]
\patchcmd{\ttlh@hang}{\parindent\z@}{\parindent\z@\leavevmode}{}{}
\patchcmd{\ttlh@hang}{\noindent}{}{}{}
\titleformat*{\section}{\large\bfseries}
\titleformat*{\subsection}{\small\bfseries}
\titleformat*{\subsubsection}{\small\bfseries}
\titleformat*{\paragraph}{\small\bfseries}
\titleformat*{\subparagraph}{\small\bfseries}
\newcommand{\N}{\mathbb{N}}
\newcommand{\R}{\mathbb{R}}
\newcommand{\Z}{\mathbb{Z}}
\newcommand{\E}{\mathbb{E}}
\newcommand{\p}{\mathbb{P}}
\newcommand{\md}{\ensuremath{\mathrm{d}}}
\newcommand{\eps}{\varepsilon}
\newcommand{\cA}{\mathcal{A}}
\newcommand{\cB}{\mathcal{B}}
\newcommand{\cC}{\mathcal{C}}
\newcommand{\cE}{\mathcal{E}}
\newcommand{\cF}{\mathcal{F}}
\newcommand{\cG}{\mathcal{G}}
\newcommand{\cT}{\mathcal{T}}
\newcommand{\mz}{\mathbf{0}}
\newtheorem{lemma}[theorem]{Lemma}
\newtheorem{remark}[theorem]{Remark}
\newtheorem{proposition}[theorem]{Proposition}
\newtheorem{definition}[theorem]{Definition}
\newtheorem{notation}[theorem]{Notation}
\newtheorem{corollary}[theorem]{Corollary}
\newtheorem{claim}[theorem]{Claim}
\newtheorem{question}[theorem]{Question}
\definecolor{darkgreen}{RGB}{30,137,37}
\definecolor{lightblue}{RGB}{48,121,216}
\begin{document}

	\title{\vspace{-1cm}Continuity of the critical value and a shape theorem for long-range percolation}

	\author{Johannes B\"aumler\footnote{ \textsc{Mathematical Institute, University of Koblenz}.\\ \text{  \ \ \ \ \ \ } E-Mail: \href{mailto:jbaeumler@uni-koblenz.de}{jbaeumler@uni-koblenz.de}
	}
	}

	\maketitle
	
	\begin{center}
		\parbox{12cm}{ \textbf{Abstract.} 
			Consider supercritical long-range percolation on $\Z^d$ where two vertices $x,y \in \Z^d$ are connected with probability asymptotic to $\|x-y\|^{-s}$ for some $s>2d$. Conditioned that the origin is in the infinite cluster, we prove a shape theorem for the set of points that can be reached within $n$ steps from the origin. 
			As part of the proof, we show that for long-range percolation with polynomially decaying connection probabilities in dimensions $d\geq 2$, the critical value depends continuously on the precise specifications of the model.
	}
	\end{center}
	
	\let\thefootnote\relax\footnotetext{{\sl MSC Class}: 82B43, 60K35, 05C12, 05C81 }
	\let\thefootnote\relax\footnotetext{{\sl Keywords}: Graph distance, shape theorem, long-range percolation, truncation}

%

\section{Introduction}

Consider Bernoulli bond percolation on $\Z^d$, where we include an edge between the vertices $x,y\in \Z^d$ with probability $1-e^{-\beta J(x-y)}$, independent of all other edges. The function $J:\Z^d \rightarrow \left[0,\infty\right)$ is a \textbf{symmetric} kernel, where symmetric means that for all vectors $x=(x_1, \ldots, x_d) \in \Z^d$, the value of $J\left(x\right)$ is invariant under flipping the sign of one or more coordinates and under permutations of the coordinates of $x$. We denote the resulting probability measure by $\p_{\beta,J}$ or $\p_\beta$ and its expectation by $\E_{\beta,J}$ or $\E_\beta$. Edges that are included are also referred to as open. We write $x \sim y$ if the edge between $x$ and $y$ is open. We do not allow the case $x=y$ here, i.e., we do not consider self-loops. So in particular the value $J(\mz)$ does not influence the percolation configuration and we can also consider $J$ as a function from $\Z^d \setminus \{\mz\}$ to $\left[0,\infty\right)$.
From this construction, we directly get that the resulting measure is translation invariant.  As the kernel $J$ has all symmetries of the integer lattice, so does the measure $\p_\beta$. We are interested in the case where the kernel is \textbf{integrable}, meaning that $\sum_{x\in \Z^d} J(x)< \infty$.
The integrability condition guarantees that the resulting open subgraph is almost surely locally finite under the measure $\p_\beta$. Further, we require that the kernel $J$ is \textbf{irreducible}, meaning that for all $x\in \Z^d$ there exist $\mz=a_0,a_1,\ldots,a_n=x\in \Z^d$ such that $J(a_{i}-a_{i-1}) > 0$ for all $i\in \{1,\ldots,n\}$. Bond percolation on $\Z^d$ with the measure $\p_{\beta,J}$ creates clusters, which are the connected components in the resulting random graph. Write $K_x$ for the cluster containing the vertex $x\in \Z^d$. A central question in percolation theory is the emergence of infinite clusters, for which we define the critical parameter $\beta_c$ by
\begin{equation*}
\beta_{c} = \beta_c(J) = \inf \left\{\beta \geq 0 : \p_\beta \left( |K_\mz| = \infty \right) > 0 \right\} \text.
\end{equation*}
A comparison with a subcritical branching process shows that there are no infinite clusters for $\beta < \left(\sum_{x\in \Z^d} J(x)\right)^{-1}$, which implies $\beta_{c}>0$. In dimensions $d\geq 2$ it is well known that $\beta_c < \infty$, as long as $J\neq 0$. For dimension $d=1$, Newman and Schulman showed that $\beta_c < \infty$ as long as $J$ does not decay faster than quadratic \cite{newman1986one,schulman1983long}. For (long-range) percolation there is also the question whether there can exist two or more infinite open clusters simultaneously. It was first proven by Aizenman, Kesten, and Newman \cite{aizenman1987uniqueness} that the number of infinite open clusters is almost surely either $0$ or $1$. Later, Burton and Keane \cite{burton1989density} gave a different proof of this fact by using the amenability of $\Z^d$. This proof also works for long-range percolation on $\Z^d$. Indeed, the integrability of the kernel $J$ implies that
\begin{equation*}
	\sum_{x: \|x\| \leq n} \ \sum_{y: \|y\| > n} \p_{\beta,J} (x\sim y) = o(n^d)
\end{equation*}
which implies uniqueness of the infinite component, cf. \cite{burton1989density}. So in particular, the {\sl infinite cluster} $\cC_\infty$ defined by
\begin{equation*}
	\cC_\infty \coloneqq \left\{x \in \Z^d : |K_x|= \infty\right\}
\end{equation*}
is almost surely connected.\\

Long-range percolation is mostly studied in the case where $J(x) \simeq \|x\|^{-s}$ for some parameter $s >0$, where we write $J(x)\simeq \|x\|^{-s}$ if the ratio between the two quantities satisfies $\eps < \frac{J(x)}{\|x\|^{-s}} < \eps^{-1}$ for a small enough $\eps>0$ and all $x\in \Z^d$ with $\|x\|$ large enough. If $s > d$, then the resulting kernel $J$ is integrable.  In general, it is expected that for $s > 2d$ the resulting random graph looks similar to nearest-neighbor percolation, is very well connected for $s < 2d$, and shows a self-similar behavior for $s = 2d$. See \cite{baumler2022behavior,baumler2022distances,benjamini2001diameter,berger2004lower,biskup2004scaling,biskup2011graph} for results pointing in this direction. 

In this paper, we are interested in several different properties of the {\sl supercritical} percolation cluster, i.e., in the case $\beta > \beta_c$. We are mostly interested in the {\sl chemical distance} between different points in the infinite cluster. 

For two points $x,y \in \Z^d$, we write $D(x,y)$ for the chemical distance (also called graph distance or hop-count distance) between $x$ and $y$, which is the length of the shortest open path between $x$ and $y$. For a kernel $J$ satisfying $J(x) \simeq \|x\|^{-s}$ and $\beta > \beta_c(J)$, the typical graph distance between $x$ and $y$ depends heavily on the parameter $s$. There are five different regimes, with the transitions happening at $s=d$ and $s=2d$:

\begin{itemize}
	\item For $s<d$, the graph distance between any two points $x,y\in \Z^d$ is at most $\lceil\frac{d}{d-s}\rceil$ \cite{benjamini2011geometry}.
	
	\item For $s=d$, the graph distance between $x$ and $y$ is of order $\frac{\log(\|x-y\|)}{\log \log(\|x-y\|)}$ \cite{coppersmith2002diameter}. 
	
	\item For $s\in (d,2d)$, the graph distance between $x$ and $y$ is of order $\log(\|x-y\|)^\Delta$ where $\Delta^{-1} = \log_2 \left(\frac{2d}{s}\right)$ \cite{biskup2004scaling, biskup2011graph, biskup2019sharp}.
	(Note that the graph containing the open edges is locally finite if and only if $s>d$.)
	
	\item  For $s=2d$, the graph distance between $x$ and $y$ is of order $\|x-y\|^{\theta}$ for some $\theta \in (0,1)$ \cite{baumler2022distances,baumler2022behavior,ding2013distances}.
	
	\item For $s>2d$, the graph distance between $\mz$ and $x$ satisfies
	\begin{equation}\label{eq:noam}
		\liminf_{x\to \infty} \frac{D(\mz,x)}{\|x\|} > 0 \text{ almost surely},
	\end{equation}
	see \cite{berger2004lower,luchtrath2026all}. 
	Furthermore, Berger conjectured that an analogous upper bound holds in the supercritical regime \cite[Conjecture 3]{berger2004lower}.
\end{itemize}

\subsection{Main results}

In this paper, we provide a strong affirmative answer to \cite[Conjecture 3]{berger2004lower}. We do this by proving a shape theorem for the ball of radius $n$ (in the intrinsic metric) around the origin. For the shape theorem, we need to define distances on $\R^d$ instead of $\Z^d$, which we do in the following. We identify a point $x\in \Z^d$ with the set $x+\left[-\tfrac{1}{2},\tfrac{1}{2}\right)^d$. For $x\in \R^d$, we write $x_d$ for the unique point in the integer lattice $x_d \in \Z^d$ that satisfies $x \in x_d+\left[-\tfrac{1}{2},\tfrac{1}{2}\right)^d$.
If $\sum_{x} J(x)< \infty$, the long-range percolation graph has holes for $\beta \in (\beta_c(J),\infty)$, in the sense that with probability $1$ there are infinitely many connected components, and only one of them is infinite. In particular, the graph is not connected. For $x,y \in \Z^d$, we write $x \leftrightarrow y$ if there exists an open path connecting $x$ to $y$, we write $x\leftrightarrow \infty$ if there exists an open path from $x$ to infinity, and we write $\cC_\infty = \left\{x \in \Z^d: x \leftrightarrow \infty\right\}$ for the infinite cluster. To circumvent the non-connectedness of $\Z^d$, for every $a\in \Z^d$ we define $\hat{a}$ as the point $y\in \cC_\infty$ such that $\|a-y\|_\infty$ is minimal. If there are several such points $y$, we choose the smallest in the lexicographic ordering. For $x\in \R^d$, we define $\hat{x}\coloneqq\widehat{x_d}$. So in particular the point $\widehat{x_d}$ is a point in the infinite cluster $\mathcal{C}_\infty$. (Note that this definition is consistent with the earlier definition of $\hat{x}$ when $x\in \Z^d$.) The uniqueness of the infinite open cluster implies that in the supercritical regime one has almost surely $D(\hat{x},\hat{y})<\infty$ for all $x,y\in \R^d$. We also define the pseudometric $\hat{D}$ on $\R^d$ by
\begin{equation*}
	\hat{D}:\R^d \times \R^d \to \left[0,\infty\right), \ \ \ \ \ \ \
	\hat{D}(x,y) = D(\hat{x},\hat{y}).
\end{equation*}
Note that this pseudometric satisfies the triangle inequality and $\hat{D}(x,y)=D(x,y)$ for all $x,y\in \mathcal{C}_\infty$, but does not satisfy $\hat{D}(x,y)=0$ for all distinct $x,y \in \R^d$, as one can have distinct $x,y\in \R^d$ with $\hat{x}=\hat{y}$. We define the balls in this metric by
\begin{equation*}
	\hat{B}_t(x) = \left\{z \in \R^d : \hat{D}(z,x) \leq t \right\}.
\end{equation*}

\begin{theorem}\label{theo:chemical dist}
	Let $d\geq 2$, $s>2d$, and let $J : \Z^d \to \left[0,\infty\right)$ be a symmetric and irreducible kernel satisfying $J(x) = \mathcal{O}(\|x\|^{-s})$. Let $\beta > \beta_c(J)$. Then there exists a deterministic function $\mu:\R^d \rightarrow \left[0,\infty\right)$ such that for all $x\in \R^d$
	\begin{equation}\label{eq:chemdist limsup}
		\lim_{n\to \infty} \frac{\hat{D}(\mz,nx)}{n} = \mu(x) \ \ \text{ almost surely and in $L_1$.}
	\end{equation}
	The function $\mu$ is a norm on $\R^d$, so in particular $\mu(x) > 0$ for all $x\in \R^d \setminus \{\mz\}$. The convergence is uniform in the sense that
	\begin{equation}\label{eq:strongshape1}
		\lim_{n\to \infty} \ \sup_{x\in \R^d : \|x\|\geq n} \Big| \frac{\hat{D}(\mz,x) - \mu(x)}{\|x\|} \Big| = 0 \text{ almost surely.}
	\end{equation}
	Define the $1$-ball in the $\mu$-metric by $\cB_\mu = \left\{z \in \R^d: \mu(z)\leq 1\right\}$. Then for all $\eps > 0$ there exists almost surely some $t_0 < \infty$ such that
	\begin{equation}\label{eq:strongshape2}
		(1-\eps) \cB_\mu \subset \frac{\hat{B}_t(\mz)}{t} \subset (1+\eps) \cB_\mu 
	\end{equation}
	for all $t\geq t_0$.
\end{theorem}

Chemical distances in the setting of finite-range percolation were also considered in \cite{garet2004asymptotic,cerf2016weak,antal1996chemical}; we use the corresponding result for finite-range percolation as an input and show that, typically, all points contained in the infinite percolation cluster are relatively close (both in terms of Euclidean and chemical distance) to the infinite finite-range cluster. The convergence of the distance $D(0,x)$, when properly rescaled, was also studied for $s\in (d,2d)$ in \cite{biskup2019sharp,biskup2024arithmetic} and for $s=2d$ in \cite{baumler2023polynomial,ding2026uniqueness}.

 One reason for the technical difficulties in the proof of Theorem \ref{theo:chemical dist} is that the distance $\hat{D}(x,y)= D(\hat{x},\hat{y})$ is not monotone in the edge set. Indeed, including and edge, say between $x$ and $z\in \Z^d$ with $z \in \cC_\infty$ might change the position of $\hat{x}$ and thus also the distance $\hat{D}(x,y)= D(\hat{x},\hat{y})$ can increase when adding an edge to the percolation environment.
The main technical innovations in the proof of Theorem \ref{theo:chemical dist} are Propositions \ref{prop:stretched exp} and \ref{propo:zeta} below. These propositions allow to rule out that there are points $x,y \in B_n(\mz) = \{ z\in \Z^d: \|z\|_\infty \leq n\}$ with an unusually large (but finite) graph distance $D(x,y)$.\\

In order to use known results for chemical distances for finite-range percolation, for example \cite{garet2004asymptotic,cerf2016weak,antal1996chemical}, we need to compare {\sl finite-range} percolation with {\sl long-range} percolation. This naturally leads to the following question:
\begin{question}
	Let $J:\Z^d \to \left[0,\infty\right)$ be a kernel and let $\beta>\beta_c(J)$. Does there exist $N \in \N$ so that the kernel $\tilde{J}$ defined by 
	\begin{equation*}
		\tilde{J}(x)= \begin{cases}
			J(x) & \text{ if } \|x\| \leq N\\
			0 & \text{ else }
		\end{cases}
	\end{equation*}
	satisfies $\p_{\beta, \tilde{J}}\left(|K_\mz|=\infty\right)>0$?
\end{question}
This question is known as the so-called {\sl(long-range percolation) truncation problem} and was first studied by Meester and Steif for long-range percolation with exponentially decaying tails \cite{meester1996continuity}. Their proof followed a similar strategy as the proof of the continuity of the critical value for slab percolation by Grimmett and Marstrand \cite{grimmett1990supercritical}. For long-range percolation with polynomially decaying tails, the truncation problem was studied by Berger \cite{berger2002transience} under the assumption that $J(x)\simeq \|x\|^{-s}$ for some $s \in (d,2d)$. 
With a possible application to the proof of Theorem \ref{theo:chemical dist} in mind, we study the truncation problem when $J(x)$ decays polynomially with exponent $s \geq 2d$. We will mostly work under the assumption where the kernel $J$ satisfies
\begin{equation}\label{eq:condition main}
	J(x) \leq C \|x\|^{-2d}
\end{equation}
for some constant $C<\infty$ and all $x \in \Z^d\setminus \{\mz\}$. Here we show that for $\beta > \beta_c(J)$ the truncation problem has an affirmative answer. This result provides an answer to a question posed by Berger \cite[Question 6.5]{berger2002transience}.

\begin{theorem}\label{theo:main}
	Let $d\geq 2$ and let $J:\Z^d \to \left[0,\infty\right)$ be an irreducible and symmetric kernel such that $J(x)=\mathcal{O}(\|x\|^{-2d})$. Let $\beta > \beta_c\left(J\right)$. Then there exists $N \in \N$ so that the kernel $\tilde{J}$ defined by 
	\begin{equation*}
		\tilde{J}(x)= \begin{cases}
			J(x) & \text{ if } \|x\| \leq N\\
			0 & \text{ else }
		\end{cases}
	\end{equation*}
	satisfies $\p_{\beta, \tilde{J}}\left(|K_\mz|=\infty\right)>0$.
\end{theorem}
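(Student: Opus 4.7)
The plan is a Grimmett-Marstrand style renormalization, with the decay hypothesis $J(x) = \mathcal{O}(\|x\|^{-2d})$ providing the tail control needed to localize the argument to finite boxes. Fix an intermediate $\beta_0 \in (\beta_c(J), \beta)$ and set $\delta = \beta - \beta_0 > 0$. Using the identity $1-e^{-\beta J(x)} = 1 - e^{-\beta_0 J(x)} e^{-\delta J(x)}$, I would realize the $(\beta,J)$-configuration as $\omega_\beta = \omega_{\beta_0} \vee \omega_\delta$, the union of two independent percolation configurations at parameters $\beta_0$ and $\delta$; after truncation this becomes $\tilde\omega_\beta^N = \tilde\omega_{\beta_0}^N \vee \tilde\omega_\delta^N$. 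The idea is to use $\tilde\omega_{\beta_0}^N$ to build dense intrinsic clusters inside large blocks and to exploit $\tilde\omega_\delta^N$ as independent sprinkling to repair any damage caused by truncation.

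At a block scale $L$ (to be chosen sufficiently large), I partition $\Z^d$ into cubes $B_z = Lz + [0,L)^d$. For each $z$ I define a good-block event $G_z$ requiring that, inside a constant-factor enlargement $\tilde B_z$ of $B_z$, the configuration $\tilde\omega_\beta^N$ contains a connected component using only edges with both endpoints in $\tilde B_z$ that occupies a positive fraction of the vertices of $B_z$ and meets each face of $B_z$ in many contact points. Every edge internal to $\tilde B_z$ has length at most $\mathrm{diam}(\tilde B_z) = O(L)$, so $G_z$ depends only on short edges and is unaffected by the truncation for $N \geq O(L)$. By a Liggett-Schonmann-Stacey type comparison, it suffices to show that $\p_{\beta,\tilde J_N}(G_0) \geq 1 - \eps_d$ for $\eps_d > 0$ small enough (depending only on $d$): the events $\{G_z\}$ are then finite-range dependent and dominate a supercritical Bernoulli site percolation on the block lattice, and the shared contact points merge neighboring good blocks into a single infinite cluster in $\tilde\omega_\beta^N$.

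The core task is thus the finite-volume criterion $\p_{\beta,\tilde J_N}(G_0) \to 1$ as $L \to \infty$. Since $\beta_0 > \beta_c(J)$, the full-model infinite cluster $C_\infty$ is unique and has positive density $\theta(\beta_0)$ by Burton-Keane, so $|C_\infty \cap B_0| \geq \theta(\beta_0) L^d/2$ with high probability by ergodicity. The decay assumption controls the truncation error: the expected number of edges of length exceeding $N$ with at least one endpoint in $\tilde B_0$ is bounded by
\begin{equation*}
	\beta \sum_{x \in \tilde B_0} \sum_{y:\|y-x\| > N} J(x-y) \leq C \beta L^d N^{-d},
\end{equation*}
which tends to $0$ as $N/L \to \infty$, so with high probability truncation removes at most finitely many edges from $\tilde B_0$. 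The independent sprinkling $\tilde\omega_\delta^N$ then contributes short-range edges that reassemble the intrinsic cluster of $\omega_{\beta_0}$ in $\tilde B_0$ into a single dense component reaching each face.

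The main obstacle is precisely this sprinkling / intrinsic-connectivity step: although the number of edges affected by truncation is small, a single removed edge can in principle split the cluster into two macroscopically distant pieces, and one must show that $C_\infty \cap \tilde B_0$ admits a realization using only edges internal to $\tilde B_0$ after the sprinkling. This is the long-range analogue of Grimmett and Marstrand's slab-crossing lemma, and I expect it to require a multi-scale sprinkling argument that combines FKG-type inequalities, the above tail estimate applied at successive scales, and the uniqueness of the infinite cluster to ensure that the surviving intrinsic pieces genuinely merge under the extra edges provided by $\tilde\omega_\delta^N$.
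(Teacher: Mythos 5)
Your high-level framework is the right one: a Grimmett--Marstrand renormalization, an intermediate parameter $\beta_0\in(\beta_c,\beta)$, and sprinkling $\omega_\beta=\omega_{\beta_0}\vee\omega_{\beta-\beta_0}$ are all ingredients of the paper's proof. You also correctly locate where the hypothesis $J(x)=\mathcal{O}(\|x\|^{-2d})$ enters and your tail estimate $\sum_{x\in\tilde B_0}\sum_{\|y-x\|>N}J(x-y)\lesssim L^dN^{-d}$ is correct; it is the same integral estimate used in the paper's Lemma~\ref{lem2} (there in the form $J(B_n(\mz),B_{(1+\delta)n}(\mz)^C)\lesssim\delta^{-d}$ uniformly in $n$).

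The genuine gap is the step you yourself flag as the ``main obstacle,'' and the mechanism you propose for it is not the right one. You suggest bounding the number of truncated (length $>N$) edges touching $\tilde B_0$ and then sprinkling to repair their removal. But even if truncation removes \emph{zero} edges meeting $\tilde B_0$, the set $\mathcal{C}_\infty\cap\tilde B_0$ need not be internally connected inside $\tilde B_0$: two points of $\mathcal{C}_\infty$ in $\tilde B_0$ can be joined only by a chain of edges of length at most $N$ that strays arbitrarily far outside $\tilde B_0$ before returning. Those edges are short, so they survive truncation, yet they are discarded when you insist on connections internal to $\tilde B_0$. Hence ``few truncated edges'' does not control intrinsic connectivity, and your sprinkling step has no hook to latch onto: you have not quantified what needs to be repaired.

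The paper's argument resolves exactly this and uses the $\mathcal{O}(\|x\|^{-2d})$ hypothesis in a different place. Rather than trying to localize $\mathcal{C}_\infty$, it shows (Lemma~\ref{lem1}) that the internally reachable set $K_R(B_n(\mz))$ has a large boundary interaction $J(K_R(B_n(\mz)),B_n(\mz)^C)$, and then (Lemma~\ref{lem2}) that, because $J(B_n(\mz),B_{(1+\delta)n}(\mz)^C)$ is bounded uniformly in $n$, essentially all of that interaction is with the thin annulus $S_n^{(1+\delta)n}$. That forces many independent chances for an edge into the annulus, enough to hit an open $m$-pad there; a square-root/FKG trick (Lemma~\ref{lem3}) localizes the $m$-pad to a prescribed face, and sprinkling (Lemma~\ref{lem4}) then builds the crossing before a block renormalization finishes the argument. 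This $m$-pad/annulus construction, rather than a truncated-edge count, is the substitute for Grimmett and Marstrand's slab-crossing lemma, and it is precisely the content your proposal defers.

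So: right skeleton, right hypothesis, right identification of the obstacle, but the concrete device you offer for the obstacle (counting truncated edges) does not address it, and the argument that actually does (the interaction-in-the-annulus estimate feeding $m$-pad connections) is missing from the proposal.
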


The proof of this Theorem follows a standard ``Grimmett-Marstrand-approach" that relies on the symmetries and properties of the integer lattice. We will use the assumption $J(x)=\mathcal{O}(\|x\|^{-2d})$ only at one point in the proof, which is in Lemma \ref{lem2}. On the technical side, the other steps to obtain Lemma \ref{lem2} and to see how Lemma \ref{lem2} implies Theorem \ref{theo:main} follow using similar arguments as the proofs of Grimmett and Marstrand \cite{grimmett1990supercritical} and Meester and Steif \cite{meester1996continuity}, respectively.
Also the precise setup of the model, i.e., that $\p_\beta \left(\{x,y\} \text{ open}\right)=1-\exp(\beta J(x-y))$ is important for the proof, as ``sprinkling" thus increases the probability that edges are open for all edges. This is used in Lemma \ref{lem4}. We also consider a slightly different model of long-range percolation in Theorem \ref{theo:p1 larger} below, where the ``sprinkling" only increases the probability that nearest-neighbor edges are open. The same model was also considered by Meester and Steif \cite{meester1996continuity}.\\

The class of kernels that still percolate after removing all long enough edges is very important for this paper. As we will refer to it quite often in the rest of the paper, we give such kernels a name with the following definition.

\begin{definition}
	We call a kernel $J:\Z^d \to \left[0,\infty\right)$ \textbf{resilient} if for all $\beta > \beta_c(J)$ there exists  $N \in \N$ so that the kernel $\tilde{J}$ defined by 
	\begin{equation*}
		\tilde{J}(x)= \begin{cases}
			J(x) & \text{ if } \|x\| \leq N\\
			0 & \text{ else }
		\end{cases}
	\end{equation*}
	satisfies $\p_{\beta, \tilde{J}}\left(|K_\mz|=\infty\right)>0$.
\end{definition}

So phrased in this language, Theorem \ref{theo:main} together with the results of Berger \cite[Theorem 1.8]{berger2002transience} show the following.

\begin{remark}
	Let $d\geq 2$ and let $J:\Z^d \to \left[0,\infty\right)$ be an irreducible and symmetric kernel satisfying $J(x)= \mathcal{O}(\|x\|^{-2d})$ or $J(x) \simeq \|x\|^{-s}$ for some $s\in (d,2d)$. Then $J$ is resilient.
\end{remark}

Note that a kernel $J:\Z^d \to \left[0,\infty\right)$ can only be resilient for dimensions $d\geq 2$, as a finite-range model can never percolate in dimension $d=1$.
Resilience of kernels was previously established by Berger for long-range percolation with kernel $J(x) \simeq \|x\|^{-s}$ for some $s \in (d,2d)$ \cite{berger2002transience} and by Meester and Steif for long-range percolation with exponential decay of the connection probability \cite{meester1996continuity}. Furthermore,  several works established resilience for different kernels $J$ with $\sum_{x} J(x)=\infty$ \cite{friedli2004longrange,friedli2006truncation,sidoravicius1999truncated,menshikov2001note,baumler2024truncation}, i.e., for the case where $\beta_c(J)=0$. The general case, i.e., assuming irreducibility and $\sum_{x} J(x)=\infty$ only, is still open in dimension $d=2$ \cite[Conjecture 1.4]{baumler2024truncation}. For dependent percolation models, resilience of the kernel was shown by Mönch for inhomogeneous long-range percolation in the weak decay regime \cite{monch2023inhomogeneous} and by Dembin and Tassion for Boolean percolation \cite{dembin2022almost}.\\

Resilience of kernels (Theorem \ref{theo:main}) already has several interesting implications, which, given Theorem \ref{theo:main}, follow from relatively simple proofs. We prove some of these results for the sake of future reference. We generally divide the following results into two classes. Theorem \ref{theo:locality} and Corollary \ref{coro:percoprobconv} deal with the continuity of the functions $\beta_c\left(\cdot\right)$ and $\p_{\beta, J} \left(|K_\mz|=\infty\right)$, whereas Theorems \ref{theo:large clusters} and \ref{theo:transience} deal with structural properties of the infinite cluster for $\beta>\beta_c$.\\

The next result we present is the `locality' of the long-range percolation graph in dimensions $d\geq 2$. We say that $J_n$ converges to $J$ in $L_1$ (of $\Z^d$) if $\sum_{x\in \Z^d} |J_n(x)-J(x)|$ converges to $0$ as $n \to \infty$.

\begin{theorem}\label{theo:locality}
	Let $d\geq 2$, and let $J: \Z^d \to \left[0,\infty\right)$ be a symmetric,  irreducible and resilient kernel.
	Let $\left(J_n\right)_{n\in \N}$ be a sequence of kernels converging to $J$ in $L_1$ of $\Z^d$. Then 
	\begin{equation*}
		\beta_c(J_n) \to \beta_c(J)
	\end{equation*}
	as $n\to \infty$. In particular, this holds for symmetric $J$ satisfying $J(x)\simeq \|x\|^{-s}$ for some $s>d$.
	Further, let $d\geq 1$, let $J: \Z^d \to \left[0,\infty\right)$ be a kernel,  and let  $\left(J_n\right)_{n\in \N}$ be a sequence of kernels converging to $J$ in $L_1$ from above. Then 
	\begin{equation*}
	\beta_c(J_n) \to \beta_c(J).
	\end{equation*}
\end{theorem}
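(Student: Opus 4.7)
The plan is to prove the two one-sided bounds $\limsup_n\beta_c(J_n)\le\beta_c(J)$ and $\liminf_n\beta_c(J_n)\ge\beta_c(J)$ separately. Only the upper bound will invoke resilience (which explains the $d\ge 2$ hypothesis in the first half); the lower bound is a perturbative susceptibility argument that works in any dimension and will handle the ``from above'' case for free.

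For the upper bound, I would fix $\beta>\beta_c(J)$, pick $\beta_0\in(\beta_c(J),\beta)$, and use resilience to choose $N\in\N$ so that $\tilde J:=J\cdot\mathbbm{1}_{\{\|\cdot\|\le N\}}$ satisfies $\p_{\beta_0,\tilde J}(|K_\mz|=\infty)>0$. Set $\tilde J_n:=J_n\cdot\mathbbm{1}_{\{\|\cdot\|\le N\}}$. Because $L_1$ convergence on $\Z^d$ implies pointwise convergence and the set $\{x:\|x\|\le N,\tilde J(x)>0\}$ is finite, for $n$ large one has $\beta\tilde J_n(x)\ge\beta_0\tilde J(x)$ uniformly over this window, hence $1-e^{-\beta\tilde J_n(x)}\ge 1-e^{-\beta_0\tilde J(x)}$. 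Standard monotone coupling then gives $\p_{\beta,\tilde J_n}(|K_\mz|=\infty)>0$, so $\beta_c(J_n)\le\beta_c(\tilde J_n)\le\beta$ for large $n$; letting $\beta\downarrow\beta_c(J)$ completes this direction.

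For the lower bound, fix $\beta<\beta_c(J)$ and let $R_n:=(J_n-J)_+$, so $\|R_n\|_1\le\|J_n-J\|_1\to 0$. Since $J_n\le J+R_n$ pointwise, $\p_{\beta,J_n}$ is stochastically dominated by $\p_{\beta,J+R_n}$, and the factorisation $e^{-\beta(J+R_n)}=e^{-\beta J}e^{-\beta R_n}$ realises this dominating measure as the edge-union of two independent Bernoulli percolations $\omega_J,\omega_R$ of kernels $\beta J,\beta R_n$. Decomposing a canonical $0$-to-$y$ path in $\omega_J\cup\omega_R$ along its $\omega_R$-edges into edge-disjoint $\omega_J$-stretches, and combining the BK inequality on the $\omega_J$-side with the independence of $\omega_J$ and $\omega_R$, will give
\[
\p_{\beta,J_n}(0\leftrightarrow y)\le\sum_{k\ge 0}\bigl(\tau*(p_n*\tau)^{*k}\bigr)(y),
\]
with $\tau(x):=\p_{\beta,J}(0\leftrightarrow x)$ and $p_n(x):=1-e^{-\beta R_n(x)}\le\beta R_n(x)$. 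Summing over $y$ and using $\|f*g\|_1=\|f\|_1\|g\|_1$ for nonnegative $f,g$ yields
\[
\chi(\beta,J_n)\le\chi(\beta,J)\sum_{k\ge 0}\bigl(\beta\chi(\beta,J)\|R_n\|_1\bigr)^k,
\]
which is finite once $\beta\chi(\beta,J)\|R_n\|_1<1$. Sharpness of the phase transition for integrable symmetric kernels (Aizenman--Barsky--Newman / Duminil-Copin--Tassion) provides $\chi(\beta,J)<\infty$, so $\chi(\beta,J_n)<\infty$ for $n$ large, which forces $\beta_c(J_n)\ge\beta$; letting $\beta\uparrow\beta_c(J)$ concludes.

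The ``from above'' statement is then almost automatic: $J_n\ge J$ gives $\beta_c(J_n)\le\beta_c(J)$ by monotone coupling in any dimension, and the susceptibility bound above supplies the matching lower bound for any $d\ge 1$ without any resilience hypothesis. The main technical delicate point I anticipate is the BK step in the path decomposition: a naive FKG-type bound on the joint $\omega_J$-connectivity probabilities goes in the wrong direction, so one must first fix a canonical (e.g.\ lexicographically minimal) path through $\omega_J\cup\omega_R$, verify edge-disjointness of the successive $\omega_J$-stretches between consecutive $\omega_R$-edges, and only then invoke BK. A subsidiary concern is citing the precise form of sharpness valid for symmetric integrable kernels on $\Z^d$.
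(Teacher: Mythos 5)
Your upper bound is essentially the paper's argument: truncate $J$ to finite range by resilience, observe that pointwise convergence on the finite window gives $\beta\tilde J_n\geq\beta_0\tilde J$ for large $n$, and conclude by monotone coupling that $\beta_c(J_n)\leq\beta_c(\tilde J_n)\leq\beta$. (The paper packages this one step more abstractly, via continuity of $\beta_c$ for uniformly finite-range kernels under pointwise convergence, but the mechanism is identical.)

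Your lower bound, however, takes a genuinely different route. The paper fixes $\beta<\beta_c(J)$, invokes the Duminil-Copin--Tassion finite-volume criterion to find a finite set $S$ with $\phi_{\beta,J}(S)<1$, and notes that $\phi_{\beta,J'}(S)$ depends on $J'$ only through finitely many one-arm probabilities and the $L_1$-mass of $J'$ outside $S$, so $\phi_{\beta,J_n}(S)<1$ for large $n$ by $L_1$-convergence. This is a three-line argument and makes the $L_1$-continuity entirely transparent. You instead set $R_n=(J_n-J)_+$, dominate $\p_{\beta,J_n}$ by the union of two independent percolations with kernels $\beta J$ and $\beta R_n$, decompose the two-point function along the $\omega_R$-edges using BK on the $\omega_J$-stretches, obtain a geometric convolution series, and deduce $\chi(\beta,J_n)\leq\chi(\beta,J)/\left(1-\beta\chi(\beta,J)\|R_n\|_1\right)$ once $\beta\chi(\beta,J)\|R_n\|_1<1$; finiteness of $\chi(\beta,J_n)$ then forces $\beta\leq\beta_c(J_n)$. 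This is a classical Hammersley/Aizenman--Newman-style perturbation series and is correct, including the canonical-path/self-avoidance care you flag for the BK step. Both proofs lean on sharpness (you need $\chi(\beta,J)<\infty$ for $\beta<\beta_c(J)$; the paper needs the $\phi$-criterion), so neither avoids it, but the paper's use of the finite-volume criterion is rather more economical -- the $L_1$-dependence is visible by inspection, with no convolution bookkeeping -- whereas yours gives a quantitative bound on $\chi(\beta,J_n)$ as a bonus. Both approaches correctly handle $\beta_c(J)=\infty$ and the $d=1$ from-above case with no additional input.
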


Note that Theorem \ref{theo:main} is a special case of Theorem \ref{theo:locality} and is also used in its proof. Theorem \ref{theo:locality} shows a locality-type result for long-range percolation that requires that the graph is fixed $(\Z^d)$ and only the kernel $J$ varies with $n$. 
A more general version of locality also allows the graphs to change and considers the critical parameter depending on the graph. Locality for short-range percolation graphs was previously established for slabs of $\Z^d$ by Grimmett and Marstrand \cite{grimmett1990supercritical}, for graphs of polynomial growth by Contreras, Martineau, and Tassion \cite{contreras2023locality}, and by Easo and Hutchcroft for general transitive graphs \cite{easo2023critical}. Using locality for (long-range) percolation, one can deduce that the percolation probability $\theta\left(\beta,J\right) = \p_{\beta, J}(|K_\mz|=\infty)$ is continuous outside of the critical points, i.e., at points $(\beta, J)$ for which $\beta\neq\beta_c(J)$.

\begin{corollary}\label{coro:percoprobconv}
	Let $d\geq 2$, let $J$ be an irreducible and resilient kernel, and let $\beta \neq \beta_c\left(J\right)$. Let $\left(J_n\right)_{n\in \N}$ be a sequence of kernels converging to $J$ in $L_1$, and let $(\beta_n)_{n\in \N} \subset (0,\infty)$ be such that $\lim_{n \to \infty}\beta_n = \beta$. Then
	\begin{equation*}
	\lim_{n\to \infty} \theta \left(\beta_n, J_n\right) = \theta \left(\beta,J\right).
	\end{equation*}
\end{corollary}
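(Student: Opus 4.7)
The plan is to combine Theorem~\ref{theo:locality}, which yields $\beta_c(J_n)\to\beta_c(J)$, with standard monotone-coupling arguments, splitting on the sign of $\beta-\beta_c(J)$. In the subcritical case $\beta<\beta_c(J)$, for all large $n$ we have $\beta_n<\beta_c(J_n)$, and hence $\theta(\beta_n,J_n)=0=\theta(\beta,J)$. The substantial work lies in the supercritical case $\beta>\beta_c(J)$, where I would prove $\liminf_n\theta(\beta_n,J_n)\geq\theta(\beta,J)$ and $\limsup_n\theta(\beta_n,J_n)\leq\theta(\beta,J)$ separately.

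For the lower bound I would truncate: set $J^{(N)}(x):=J(x)\mathbf{1}_{\|x\|\leq N}$ and analogously for $J_n^{(N)}$. For fixed $N$ the truncated kernels have common finite support, $L^1$-convergence forces pointwise $J_n^{(N)}\to J^{(N)}$, and classical Grimmett--Marstrand continuity for finite-range models yields $\theta(\beta_n,J_n^{(N)})\to\theta(\beta,J^{(N)})$ at $\beta\neq\beta_c(J^{(N)})$. The monotone coupling then gives $\liminf_n\theta(\beta_n,J_n)\geq\theta(\beta,J^{(N)})$. Sending $N\to\infty$: by resilience of $J$, $\beta>\beta_c(J^{(N)})$ eventually, and by Burton--Keane each $J^{(N)}$-model has a unique infinite cluster $C_\infty^{(N)}$. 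In the natural monotone coupling $C_\infty^{(N)}\subseteq C_\infty$; moreover, any two vertices of $C_\infty$ are joined by a finite open $J$-path, which lies in $\omega_{J^{(N)}}$ as soon as $N$ exceeds its maximum edge length, so by uniqueness they must both lie in $C_\infty^{(N)}$. This forces $C_\infty^{(N)}\uparrow C_\infty$ a.s., hence $\theta(\beta,J^{(N)})\uparrow\theta(\beta,J)$, closing the lower bound.

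For the upper bound, fix $\beta^+>\beta$ and set $\tilde J_n:=J+(J_n-J)^+$, so $\tilde J_n\geq\max(J,J_n)$ and $\tilde J_n\to J$ in $L^1$ from above. Passing to a subsequence $n_k$ with $\sum_k\|\tilde J_{n_k}-J\|_1<\infty$, define $K_k:=\sup_{\ell\geq k}\tilde J_{n_\ell}$: then $K_k\geq J_{n_k}$, $K_k\downarrow J$ pointwise, and $\|K_k-J\|_1\leq\sum_{\ell\geq k}\|\tilde J_{n_\ell}-J\|_1\to 0$, so $K_k\downarrow J$ monotonically in $L^1$. For $k$ large enough that $\beta_{n_k}<\beta^+$, the monotone coupling yields $\theta(\beta_{n_k},J_{n_k})\leq\theta(\beta^+,K_k)$. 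Since $K_k\downarrow J$ pointwise, the standard monotone-coupling argument (the same one that proves right-continuity of $\theta(\cdot,J)$ in $\beta$) gives $\theta(\beta^+,K_k)\downarrow\theta(\beta^+,J)$, so $\limsup_k\theta(\beta_{n_k},J_{n_k})\leq\theta(\beta^+,J)$. Since every subsequence of $(\beta_n,J_n)$ admits such a further subsequence, $\limsup_n\theta(\beta_n,J_n)\leq\theta(\beta^+,J)$. Letting $\beta^+\downarrow\beta$ and using right-continuity of $\theta(\cdot,J)$ at $\beta>\beta_c(J)$ gives $\limsup_n\theta(\beta_n,J_n)\leq\theta(\beta,J)$.

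The main obstacle, in my view, is the lower-bound step that the $C_\infty^{(N)}$ fill out $C_\infty$ as $N\to\infty$; this is where both resilience of $J$ (via Theorem~\ref{theo:locality}) and Burton--Keane uniqueness are essential, replacing the simple geometric covering arguments available in the nearest-neighbor setting.
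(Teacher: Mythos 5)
Your proposal is correct and the lower‑bound $(\liminf)$ argument essentially matches the paper's: truncate to $I_N=J\mathbf{1}_{\|\cdot\|\le N}$, use resilience (through Theorem~\ref{theo:locality}) to get $\beta>\beta_c(I_N)$ for some $N$, couple, and use uniqueness of the infinite cluster to show $\mathcal{C}_\infty^{(N)}\uparrow\mathcal{C}_\infty$, i.e.\ $\theta(\beta,I_N)\to\theta(\beta,J)$. One small difference there: you invoke joint continuity of $\theta$ in $(\beta,J)$ for finite-range models at off-critical points, whereas the paper only uses one-variable continuity $\tilde\beta\mapsto\theta(\tilde\beta,I_N)$ at $\tilde\beta\ne\beta_c(I_N)$ together with the pointwise domination $\beta_nJ_n(x)\ge(\beta-\delta)I_N(x)$ for large $n$; the latter is a little more elementary and avoids having to cite the two-parameter result. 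The upper bound $(\limsup)$ is where you genuinely diverge: the paper's proof is a short, coupling-free argument that approximates $\{|K_\mz|<\infty\}$ by finitely many fixed finite-cluster events $\{K_\mz=A_i\}$, uses $L_1$-convergence to get $\p_{\beta_n,J_n}(K_\mz=A_i)\to\p_{\beta,J}(K_\mz=A_i)$, and concludes — no monotonicity, no resilience, no subsequences, and it even holds in $d=1$ (the paper points this out in a remark). Your route via the dominating decreasing sequence $K_k\downarrow J$ and the monotone coupling is correct, but the key step $\theta(\beta^+,K_k)\downarrow\theta(\beta^+,J)$ is not quite the ``same argument as right-continuity of $\theta(\cdot,J)$ in $\beta$'' in the naive coupling sense: $\omega_{K_k}\downarrow\omega_J$ pointwise does not by itself imply that the events $\{\mz\leftrightarrow\infty\text{ in }\omega_{K_k}\}$ decrease to $\{\mz\leftrightarrow\infty\text{ in }\omega_J\}$, since the infinite cluster can move. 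What rescues it is the representation $\theta=\inf_m\p(\mz\leftrightarrow B_m(\mz)^C)$ and the continuity of each finite-box crossing probability under $L_1$-convergence of kernels — which is in spirit the same local-approximation idea the paper uses directly. So the scaffolding (passing to a summable subsequence, building $K_k=\sup_{\ell\ge k}\tilde J_{n_\ell}$) works but is unnecessary overhead; the paper's upper-bound argument already handles arbitrary non-monotone $(J_n)$ converging in $L_1$ with no subsequence extraction.
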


The next results (Theorems \ref{theo:large clusters} and \ref{theo:transience}) concern properties of the infinite percolation cluster for $\beta > \beta_c(J)$. The important connection to Theorem \ref{theo:main} is that for a resilient kernel $J$ and $\beta> \beta_c(J)$, the infinite percolation cluster $\cC_\infty = \left\{x \in \Z^d : x \leftrightarrow \infty \right\}$ sampled by $\p_{\beta,J}$ already contains an infinite percolation cluster with finite range. 
Due to this inclusion, we can use known results for finite-range percolation and then use the finite-range percolation cluster contained in $\cC_\infty$ in order to prove the corresponding statements for the cluster $\cC_\infty$. Going from the statements of the finite-range cluster to the infinite-range cluster is relatively straightforward in Theorems \ref{theo:large clusters} and \ref{theo:transience}. \\

The first result about the structure of the supercritical cluster concerns the existence of giant clusters for long-range percolation in the supercritical regime. The corresponding result for finite-range percolation was shown by Deuschel and Pisztora in \cite{deuschel1996surface}. For a set $A\subset \Z^d$, we write $|K_{\max}(A)|$ for the size of the largest open component contained in $A$. Note that this is well-defined even if the largest open component in $A$ is not unique.

\begin{theorem}\label{theo:large clusters}
	Let $d\geq 2$, let $J:\Z^d \to \left[0,\infty\right)$ be an irreducible, symmetric, and resilient kernel, and let $\beta > \beta_c(J)$. Then for all $\eps > 0$, there exists $N\in \N$ such that for all $n\geq N$
	\begin{equation*}
		\p_{\beta,J} \left(|K_{\max}\left(B_n(\mz)\right)| \geq (\theta(\beta,J)-\eps)|B_n(\mz)|\right) \geq 1-\eps .
	\end{equation*}
\end{theorem}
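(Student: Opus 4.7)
The plan is to reduce the problem to the finite-range setting via truncation, apply a classical large-cluster estimate for supercritical finite-range percolation, and then transfer the bound back to $J$ by the standard monotone coupling.

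First, since $J$ is resilient and $\beta > \beta_c(J)$, I fix $N_0 \in \N$ large enough that the truncation $\tilde{J}(x) := J(x)\,\mathbbm{1}\{\|x\|\leq N_0\}$ satisfies $\beta > \beta_c(\tilde J)$. The truncations $\tilde{J}_{N}$ converge to $J$ in $L_1$ as $N\to\infty$, so by Corollary~\ref{coro:percoprobconv} I may enlarge $N_0$ so that moreover $\theta(\beta,\tilde J) \geq \theta(\beta, J) - \eps/2$.

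Second, $\tilde J$ is finite-range and $\beta > \beta_c(\tilde J)$, so the Deuschel--Pisztora large-cluster theorem (in its finite-range version) delivers some $N_1$ such that for every $n\geq N_1$,
\begin{equation*}
\p_{\beta,\tilde J}\!\left(|K_{\max}(B_n(\mz))| \geq \bigl(\theta(\beta,\tilde J) - \tfrac{\eps}{2}\bigr)|B_n(\mz)|\right) \geq 1-\eps.
\end{equation*}
Third, I couple $\p_{\beta,\tilde J}$ and $\p_{\beta,J}$ by drawing i.i.d.\ uniform variables $U_{\{x,y\}} \in [0,1]$ and opening the edge $\{x,y\}$ under $J$, respectively $\tilde J$, iff $U_{\{x,y\}} \leq 1 - e^{-\beta J(x-y)}$, respectively $\leq 1 - e^{-\beta \tilde J(x-y)}$. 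Since $\tilde J \leq J$, every $\tilde J$-open edge is $J$-open, so each cluster of the induced subgraph on $B_n(\mz)$ under $\tilde J$ is contained in a cluster of the induced subgraph on $B_n(\mz)$ under $J$; hence $|K_{\max}(B_n(\mz))|$ under $\p_{\beta,J}$ pointwise dominates the same quantity under $\p_{\beta,\tilde J}$. Splicing together the three steps gives the claimed bound with the density $\theta(\beta,J) - \eps$.

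The delicate point will be invoking the Deuschel--Pisztora large-cluster bound in the general finite-range rather than the nearest-neighbor setting. If a direct reference is not immediately at hand, one coarse-grains at a scale $M$ much larger than $N_0$ to produce a renormalized $1$-dependent site process which stochastically dominates Bernoulli site percolation at density arbitrarily close to $1$; Pisztora's slab decomposition together with the standard cluster-sprinkling argument then transports the nearest-neighbor statement to the finite-range $\tilde J$-model without further modification.
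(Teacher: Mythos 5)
Your proof is correct, but it takes a genuinely different route from the paper. You black-box the finite-range Deuschel--Pisztora large-cluster estimate for the truncated kernel $\tilde J$, pull $\theta(\beta,\tilde J)$ close to $\theta(\beta,J)$ via Corollary~\ref{coro:percoprobconv}, and then transfer via monotone coupling (which is valid: under $\tilde J\leq J$ every $\tilde J$-open edge is $J$-open, so the biggest cluster in the induced subgraph on $B_n(\mz)$ can only grow, and $\{|K_{\max}(B_n(\mz))|\geq c\}$ is increasing). The paper instead avoids invoking the surface-order large-deviation machinery for finite-range models. It proves a local connectivity lemma (Claim~\ref{claim:connection in boxes}) via the Cerf--Th\'eret chemical-distance bound \eqref{eq:theret} and the exponential estimate \eqref{eq:roberto}, and then combines three ingredients: (i) ergodicity gives the density of $\cC_\infty(\omega)$ in $B_{n-3\sqrt{n}}(\mz)$ close to $\theta(\beta,J)$; (ii) stationarity controls the fraction of points that are more than $\sqrt n$ away from the short-edge infinite cluster $K$; (iii) the connectivity lemma glues those nearby points inside $B_n(\mz)$. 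A notable advantage of the paper's route is that it works directly at density $\theta(\beta,J)$ without needing to pass through $\theta(\beta,\tilde J)$ or Corollary~\ref{coro:percoprobconv}; the cost is proving Claim~\ref{claim:connection in boxes}. Your route is shorter \emph{provided} you accept a finite-range extension of Deuschel--Pisztora, which the literature and your coarse-graining remark do justify, but is not literally what \cite{deuschel1996surface} states; this is the one spot in your argument that would need a careful citation or a sketch such as the one you outline in your last paragraph.
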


Further, we use the result of Theorem \ref{theo:main} to show transience of the simple random walk on the supercritical long-range percolation cluster in dimensions $d\geq 3$. This solves a conjecture by Heydenreich, Hulshof, and Jorritsma \cite{heydenreich2017structures} and S\"onmez and Rouselle \cite{sonmez2022random}.

\begin{theorem}\label{theo:transience}
	Let $d\geq 3$, let $J$ be an irreducible and resilient kernel, and let $\beta > \beta_c\left(J\right)$. Then the unique infinite component is almost surely a transient graph. In particular, if $J$ is a symmetric kernel such that
	\begin{equation}\label{eq:s cond}
		J(x) \simeq \|x\|^{-s}
	\end{equation}
	for some $s> d$, the infinite percolation cluster is almost surely transient for $\beta > \beta_c(J)$.
\end{theorem}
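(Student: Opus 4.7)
Since $J$ is resilient, irreducible, and $\beta > \beta_c(J)$, one can choose $N\in\N$ so that the truncated kernel $\tilde J(x) := J(x)\,\mathbf{1}_{\{\|x\|\leq N\}}$ percolates at $\beta$, i.e.\ $\p_{\beta,\tilde J}(|K_\mz|=\infty)>0$; enlarging $N$ if necessary, we may moreover assume that the support of $\tilde J$ generates $\Z^d$ as an additive group (possible because enlarging $N$ only adds edges while preserving percolation, and irreducibility of $J$ ensures that some finite subset of its support generates $\Z^d$). Thus $\p_{\beta,\tilde J}$ is a supercritical, symmetric, irreducible, finite-range Bernoulli bond percolation on $\Z^d$.

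In dimension $d\geq 3$ the unique infinite cluster of such a finite-range model is almost surely a transient graph. For the nearest-neighbor case this is the theorem of Grimmett-Kesten-Zhang \cite{grimmett1993random}; the extension to any finite-range irreducible supercritical kernel is standard and can be obtained either by rerunning their block renormalization together with the construction of a unit flow of finite energy in the infinite cluster (the only inputs being boundedness of range and supercriticality), or by combining a finite-range Antal-Pisztora-type chemical-distance bound \cite{antal1996chemical} (which yields rough isometry of the infinite cluster with $\Z^d$) with the fact that rough isometries between bounded-degree graphs preserve transience. Denote the resulting a.s.\ transient infinite $\tilde J$-cluster by $C_\infty^{\tilde J}$.

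To transfer transience to $J$, I couple the two measures by a single family of i.i.d.\ uniform $[0,1]$ random variables $\{U_e\}$ indexed by unordered pairs of distinct vertices, declaring an edge $\{x,y\}$ to be $\tilde J$-open when $U_{\{x,y\}} < 1-e^{-\beta\tilde J(x-y)}$ and $J$-open when $U_{\{x,y\}} < 1-e^{-\beta J(x-y)}$. Because $\tilde J\leq J$ pointwise, every $\tilde J$-open edge is $J$-open in this coupling, and hence by Burton-Keane uniqueness of the infinite cluster \cite{burton1989density} in both models, $C_\infty^{\tilde J}$ is almost surely a subgraph of the infinite $J$-cluster $C_\infty^J$. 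Any unit flow from $\mz$ to infinity in $C_\infty^{\tilde J}$ is then also a unit flow from $\mz$ to infinity in $C_\infty^J$ with the same energy, so by Thomson's principle $C_\infty^J$ has finite effective resistance to infinity and is transient almost surely. The special case $J(x)\simeq\|x\|^{-s}$ with $s>d$ follows because such kernels are resilient, by Theorem \ref{theo:main} when $s\geq 2d$ and by \cite{berger2002transience} when $s\in(d,2d)$.

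The main obstacle is the finite-range transience statement used in the second paragraph: though folklore, it is not explicit in \cite{grimmett1993random}, and making it fully rigorous requires either rerunning the renormalization-and-flow construction in the finite-range setting or invoking a finite-range version of the Antal-Pisztora chemical-distance bound. Once this input is in place, the remainder of the proof is a routine monotone coupling combined with Rayleigh monotonicity.
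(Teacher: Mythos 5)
Your proof is essentially identical to the paper's: truncate $J$ via resilience to a supercritical finite-range kernel, invoke the Grimmett--Kesten--Zhang transience result (noting, as the paper also does, that it extends from nearest-neighbor to finite-range), and then transfer transience to the full $J$-cluster by the monotone coupling together with Rayleigh's monotonicity principle. The only cosmetic difference is that you make explicit the role of Burton--Keane uniqueness in identifying the $\tilde J$-infinite cluster as a subgraph of the $J$-infinite cluster, which the paper leaves implicit.
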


Note that the restriction to $d\geq 3$ is necessary, as for $d\in \{1,2\}$ and kernels $J$ satisfying condition \eqref{eq:s cond} with $s\geq 2d$, the simple random walk on the long-range percolation cluster is recurrent, as proven in \cite{berger2002transience,baumler2023recurrence}.

\subsection{Varying short edges only}

In the previous literature, also a different model of long-range percolation was considered. Let $f : \Z^d \to \left[0,1\right)$ be a \textbf{symmetric} function, i.e., a function $f$ such that $f(x)$ is invariant under sign-changes and permutations of the coordinates of $x$. We define the edge $e=\{x,y\}$ to be open with probability $f(x-y)$ if $\|x-y\|>1$, and with probability $p \in \left[0,1\right]$  if $\|x-y\|=1$. We assume that all edges are independent of each other and write $\p_{p,f}$ for the resulting probability measure. Typically, we consider the function $f$ as fixed and vary the parameter $p$. The difference to the previous setup is that here, we vary the probability that short-range edges are open, whereas, in the previous setup, all probabilities $\p_{\beta,J} \left(\{x,y\} \text{ open}\right)$ changed when varying $\beta$, as long as $J(x-y) \in (0,\infty)$. As the construction of the measures $\p_{p,f}$ is monotone in $p$, we can define the critical value 
\begin{equation*}
	p_c(f) = \inf \left\{p \in \left[0,1\right] : \p_{p,f} \left( |K_\mz| = \infty \right) > 0 \right\}.
\end{equation*}
Note that $p_c(f) \geq 0$, where equality can hold, even if the function $f$ is integrable. Furthermore, for every function $f$ one has $p_c(f) \leq p_c^d \leq 1$, where $p_c^d$ denotes the critical value for nearest-neighbor percolation on $\Z^d$; also note that $p_c^d<1$ for $d \geq 2$. For $d=1$, and for functions $f$ for which $\liminf_{x\to \infty} f(x) \|x\|^{2} > 1$, Newman and Schulman proved that $p_c(f) < 1$ \cite{newman1986one}, whereas the condition $f(x) \leq (1+o(1)) \|x\|^{-2}$ implies that $p_c(f)=1$ in dimension $d=1$ \cite{aizenman1986discontinuity}. The setup of varying the short-range probabilities in long-range percolation was often considered in previous literature \cite{aizenman1986discontinuity, meester1996continuity, newman1986one}, particularly in the work about continuity of the critical point for long-range percolation with exponential decay by Meester and Steif \cite{meester1996continuity}. One natural question is now whether the results that we stated above also hold for a supercritical long-range percolation measure $\p_{p,f}$. The answer is yes, at least under a certain regularity condition. 

\begin{theorem}\label{theo:p1 larger}
	Let $f : \Z^d  \to \left[0,1\right)$ be a symmetric function so that
	\begin{equation}\label{eq:fnecessary}
		f(x) \simeq \|x\|^{-s}
	\end{equation}
	for some $s > d$.
	Then in the supercritical regime $(p>p_c(f))$, the same results as stated in Theorems \ref{theo:main}, \ref{theo:large clusters}, and \ref{theo:transience} hold for the measure $\p_{p,f}$. If $s>2d$, then also the shape theorem as stated in Theorem \ref{theo:chemical dist} holds.
\end{theorem}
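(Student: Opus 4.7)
The plan is to reduce every statement about $\p_{p,f}$ to the corresponding statement about $\p_{\beta,J}$ proven earlier in the paper via a bijective re-parameterization of edge probabilities. Given $p \in (0,1)$ and $f : \Z^d \to [0,1)$ as in the hypothesis, define a kernel $J_{p,f} : \Z^d \setminus \{\mz\} \to [0,\infty)$ by
\begin{equation*}
J_{p,f}(x) = \begin{cases} -\log(1-p) & \text{if } \|x\| = 1, \\ -\log(1-f(x)) & \text{if } \|x\| > 1. \end{cases}
\end{equation*}
Because $1 - e^{-J_{p,f}(x)}$ equals $p$ for $\|x\|=1$ and $f(x)$ for $\|x\|>1$, the two measures coincide: $\p_{p,f} = \p_{1, J_{p,f}}$. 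Under this identification, $p > p_c(f)$ is equivalent to $\beta_c(J_{p,f}) < 1$, and every configurational statement about $\p_{p,f}$ becomes one about $\p_{1, J_{p,f}}$.

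Next I would check that $J_{p,f}$ satisfies the hypotheses used in the earlier theorems. Symmetry is inherited from $f$ and the uniform nearest-neighbor value; irreducibility holds as soon as $p > 0$, since the nearest-neighbor edges already generate $\Z^d$. The assumption $f(x) \simeq \|x\|^{-s}$, together with $-\log(1-u) = u + O(u^2)$ as $u \to 0$ (which applies because $f(x) \to 0$), yields $J_{p,f}(x) \simeq \|x\|^{-s}$ for the same exponent $s > d$. Resilience of $J_{p,f}$ then follows by a case split on $s$: for $s \in (d, 2d)$ it is Berger's theorem \cite{berger2002transience}, and for $s \geq 2d$ one has $J_{p,f}(x) = \mathcal{O}(\|x\|^{-2d})$, so Theorem \ref{theo:main} applies. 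With $J_{p,f}$ known to be resilient, symmetric, and irreducible, the conclusions of Theorem \ref{theo:main}, Theorem \ref{theo:large clusters}, Theorem \ref{theo:transience}, and -- for $s > 2d$ -- Theorem \ref{theo:chemical dist} all transfer to $\p_{p,f}$ by reading them off the coupling; in particular, the truncation $\tilde{J}_{p,f}$ appearing in Theorem \ref{theo:main} corresponds exactly to truncating $f$ to zero beyond distance $N$ while leaving $p$ unchanged.

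The only ingredient requiring slightly more care is the locality statement (Theorem \ref{theo:locality} and Corollary \ref{coro:percoprobconv}). Here I would show that if $p_n \to p \in (0,1)$ and $f_n \to f$ in $L_1$, then $J_{p_n, f_n} \to J_{p,f}$ in $L_1$ as well: since $\sum_x f(x) < \infty$, the values $f(x)$ are bounded above by some $1-\delta < 1$ outside a fixed finite set, and on that set $u \mapsto -\log(1-u)$ is Lipschitz, so the $L_1$-convergence passes through the nonlinearity. Theorem \ref{theo:locality} then gives $\beta_c(J_{p_n, f_n}) \to \beta_c(J_{p,f})$, and a standard monotonicity-in-$p$ argument converts this into $p_c(f_n) \to p_c(f)$; the percolation-probability version of Corollary \ref{coro:percoprobconv} follows along the same lines. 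The main obstacle I anticipate is purely bookkeeping -- ensuring that the approximating $f_n$ stay uniformly bounded away from $1$ so that the logarithm remains Lipschitz on the relevant range -- which is delivered by the strict inequality $f < 1$ combined with the polynomial-tail hypothesis, so no genuine conceptual difficulty arises beyond the reductions already carried out in the main body of the paper.
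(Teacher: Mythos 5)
There is a genuine gap at the asserted equivalence ``$p>p_c(f)$ is equivalent to $\beta_c(J_{p,f})<1$.'' Only one direction is easy. If $\beta_c(J_{p,f})<1$, pick $\beta<1$ with $\p_{\beta,J_{p,f}}$ supercritical; this measure has nearest-neighbor probability $p''=1-(1-p)^\beta<p$ and weaker long-range probabilities than $f$, so it is dominated by $\p_{p'',f}$, giving $p_c(f)\leq p''<p$. The converse direction, which is the one your reduction actually needs, does \emph{not} come for free. From $p>p_c(f)$ you learn that $\p_{1,J_{p,f}}$ percolates, hence $\beta_c(J_{p,f})\leq 1$; but a priori you could have $\beta_c(J_{p',f})=1$ for an entire interval $p'\in(p_c(f),p]$, because lowering $\beta$ weakens both the nearest-neighbor and the long-range edges simultaneously, whereas lowering $p'$ only weakens the nearest-neighbor edges — so there is no stochastic comparison between $\p_{\beta,J_{p,f}}$ for $\beta<1$ and $\p_{p',f}$ for $p'\in(p_c(f),p)$ that would rule this out.

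What is missing is precisely strict monotonicity of $\beta_c$ under a strict increase of the nearest-neighbor component of the kernel, i.e.\ an Aizenman--Grimmett enhancement argument. This is exactly Proposition \ref{prop:strict ineq}, and this is also where the regularity hypothesis $f(x)\simeq\|x\|^{-s}$ enters the paper's proof: it guarantees that the associated kernel satisfies the comparability condition \eqref{eq:comparability}, which the enhancement (rerouting) argument of Proposition \ref{propo:strict} needs. The paper's proof is arranged around this point: it sets $\bar p=(p+p_c(f))/2$, chooses $\beta$ via $e^{-\beta}=(1-p)/(1-\bar p)$, builds a kernel $J$ with $\p_{\beta,J}=\p_{\bar p,f}$, observes $\beta\geq\beta_c(J)$, and then applies Proposition \ref{prop:strict ineq} to conclude $\beta_c(\overline J)<\beta_c(J)\leq\beta$ so that $\p_{p,f}=\p_{\beta,\overline J}$ is strictly supercritical. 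Your proposal simply assumes the conclusion of this step. Once that step is restored (either by invoking Proposition \ref{prop:strict ineq} or by reproducing the enhancement argument), the remaining bookkeeping in your write-up — verifying symmetry, irreducibility, the asymptotics $J_{p,f}(x)\simeq\|x\|^{-s}$, resilience by the $s$-case split, and the $L_1$ stability of the $\log$ re-parameterization — is fine and in the same spirit as the paper.
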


In particular, Theorem \ref{theo:p1 larger} shows that the exponential decay (respectively the ``{\sl Condition C}") required in the paper by Meester and Steif \cite{meester1996continuity} can be relaxed to polynomial decay. Our main tool for proving the results of Theorem \ref{theo:p1 larger} is a strict inequality of critical points for different kernels.

\begin{proposition}\label{prop:strict ineq}
	Let $J$ be an integrable and symmetric kernel so that there exists constants $0<a<A<\infty$ such that
	\begin{equation}\label{eq:comparability}
		0 < aJ(x+e_i) \leq J(x) \leq A J(x+e_i)
	\end{equation}
	for all $i\in \{1,\ldots,d\}$ and $x \in \Z^d$ with $\|x\|$ large enough. Define the kernel $\overline{J}$ by 
	\begin{equation*}
	\overline{J}(x)= \begin{cases}
	J(x) +1 & \text{ if } \|x\| = 1\\
	J(x) & \text{ else }
	\end{cases}.
	\end{equation*}
	Then $\beta_c(\overline{J}) < \beta_c(J)$.
\end{proposition}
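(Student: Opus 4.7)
The plan is to adapt the Aizenman--Grimmett essential-enhancement strategy to the present long-range setting. First observe that, factoring the closed-edge probability, $\p_{\beta, \overline{J}}$ realises $\p_{\beta, J}$ superposed with an independent Bernoulli$(1-e^{-\beta})$ opening on each nearest-neighbor edge: for $\|e\|=1$ one has $1 - e^{-\beta \overline{J}(e)} = 1 - e^{-\beta J(e)} e^{-\beta}$, exactly the probability that either the $J$-edge or an independent ``extra'' edge is open.

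Interpolate via $J_t(x) := J(x) + t$ for $\|x\|=1$ and $J_t(x) := J(x)$ otherwise, for $t \in [0,1]$, so $J_0 = J$ and $J_1 = \overline{J}$. Set $\theta_n(\beta,t) := \p_{\beta, J_t}(\mz \leftrightarrow \partial B_n(\mz))$ and $\pi_n(e;\beta,t) := \p_{\beta, J_t}(e \text{ is pivotal for } \{\mz \leftrightarrow \partial B_n(\mz)\})$. Russo's formula for independent Bernoullis yields
\begin{align*}
\partial_\beta \theta_n(\beta, t) &= \sum_e J_t(e)\, e^{-\beta J_t(e)}\, \pi_n(e;\beta,t),\\
\partial_t \theta_n(\beta, t) &= \beta \sum_{\|e\|=1} e^{-\beta J_t(e)}\, \pi_n(e;\beta,t).
\end{align*}
The key step is a uniform differential inequality $\partial_\beta \theta_n \leq C\, \partial_t \theta_n$, valid for all $n$, all $t \in [0,1]$, and $\beta$ in a small neighborhood of $\beta_c(J)$. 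Integrating it along the curve $s \mapsto (\beta_0 - s,\, sC)$ for $s \in [0, 1/C]$ gives $\theta_n(\beta_0 - 1/C, 1) \geq \theta_n(\beta_0, 0)$; letting $n \to \infty$ shows $\theta(\beta_0 - 1/C, \overline{J}) \geq \theta(\beta_0, J)$, which is strictly positive for any $\beta_0 > \beta_c(J)$. Hence $\beta_c(\overline{J}) \leq \beta_0 - 1/C$, and sending $\beta_0 \downarrow \beta_c(J)$ yields the claim.

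The nearest-neighbor part of $\partial_\beta \theta_n$ is handled directly: by the full symmetry of $J$, the value $J(e)$ is a single constant $c_J$ on NN edges, so this part equals $((c_J+t)/\beta) \cdot \partial_t \theta_n$. The difficult piece is the long-range sum $\sum_{\|e\|>1} J(e)\, e^{-\beta J(e)}\, \pi_n(e; \beta, t)$. To control it, one performs a pivotal-edge surgery: given a configuration in which a long edge $e = \{u,v\}$ is pivotal for $\{\mz \leftrightarrow \partial B_n\}$, locally modify a bounded number of edges near $u$ so that a chosen NN edge at $u$ becomes pivotal for the same event. The comparability condition \eqref{eq:comparability} is what makes the cost of this surgery uniform in the scale $\|e\|$: it bounds, independently of $\|e\|$, the ratio of the weights of the modified edges to those of neighboring edges. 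Combined with the integrability of $J$, this yields the required bound on the long-range contribution in terms of $\partial_t \theta_n$.

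The main obstacle is the surgery itself: each long pivotal edge must be mapped in a controlled (bounded-to-one) way to some NN pivotal edge, and the aggregate probabilistic cost must remain finite after summing over all long $e$. The comparability condition \eqref{eq:comparability} is precisely the tool that replaces the geometric locality of finite-range models by a quantitative comparability of kernel weights, allowing the standard Aizenman--Grimmett enhancement bookkeeping to be transferred. Once the differential inequality is in place, the integration and the limit $n \to \infty$ are routine.
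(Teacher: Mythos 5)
Your proposal is correct and in substance identical to the paper's argument: both proofs run the Aizenman--Grimmett essential-enhancement scheme in a two-parameter family of measures where one parameter ($\beta$) scales every edge and a second parameter adds an independent nearest-neighbor sprinkling, and both derive the key differential inequality by a pivotal-edge rerouting that leans on condition \eqref{eq:comparability} to keep the probability costs of the local surgery uniform over all edge lengths. The paper parametrizes the sprinkling additively in the exponent (probability $1-e^{-\beta J(e)-s}$ for $\|e\|=1$), which is related to your $J_t$ parametrization by $s=\beta t$, and it stops the integration after a small interval in $s$ and closes the gap by stochastic domination $\p_{\beta,s,J}\preceq\p_{\beta,\overline J}$ (valid when $s\leq\beta$); you instead push the interpolation all the way to $t=1$, landing directly on $\p_{\beta-1/C,\overline J}$, which is a small streamlining and is fine because the ratio function is continuous and so bounded on $[\beta_c-\delta,\beta_c+\delta]\times[0,1]$, so the required constant $C$ can be chosen uniformly (and made large enough that the $\beta$-interval of length $1/C$ stays inside the domain of validity). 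One imprecision worth fixing in your account of the surgery: the rerouting map sends each long edge $\{x,y\}$ with $x\geq\mz$ to the fixed NN edge $\{x,x-e_1\}$, so it is not bounded-to-one in $y$; the sum over $y$ is controlled not by boundedness of the map but because, after rerouting, the events ``$\{x,x-e_1\}$ and $\{x-e_1,y\}$ are both open and pivotal'' are pairwise disjoint in $y$ (a vertex cannot have three pivotal open edges for a connection event), which is exactly the bookkeeping step the paper uses to collapse the sum over $y$ into a single probability.
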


To prove this result, we use the well-known technique of enhancements developed by Aizenman and Grimmett \cite{aizenman1991strict}.

\subsection{Organization}

We start with the proof of Theorem \ref{theo:main} in section \ref{sec:2}. As said above, the proof of this result follows a similar approach as used by Grimmett and Marstrand \cite{grimmett1990supercritical} and Meester and Steif \cite{meester1996continuity}. After this, we prove the shape theorem for long-range percolation, Theorem \ref{theo:chemical dist}, in section \ref{sec:shape}. This is by far the most technical part of the paper. The main technical innovations are Propositions \ref{prop:stretched exp} and \ref{propo:zeta}. Given these two propositions, the proof of Theorem \ref{theo:chemical dist} follows from similar methods as in the nearest-neighbor case \cite{garet2004asymptotic}. Section \ref{sec:shape} is completely self-contained and the only reference to the earlier sections is the use of Theorem \ref{theo:main}. In section \ref{sec:applis}, we prove Theorems \ref{theo:locality}, \ref{theo:large clusters}, and \ref{theo:transience}. The proof of these results is, given Theorem \ref{theo:main}, relatively straightforward. Theorem \ref{theo:p1 larger} is proven in section \ref{sec:short edges}.

\subsection{Notation}\label{sec:notation}

We write $\|x\|$ for the $2$-norm of $x$. We write $B_m(x)$ for the ball of radius $m$ around $x$ in the $\infty$-norm, i.e., $B_m(x)=\{y\in \Z^d : \|x-y\|_\infty \leq m\}$. We write $\mz$ for the origin of $\Z^d$ and define the annulus of thickness $\delta n$ around $B_n(\mz)$ by $S_n^{(1+\delta)n} \coloneqq B_{(1+\delta)n}(\mz) \setminus B_{n}(\mz)$.

We use the notation $x \leftrightarrow y$ if there exists an open path from $x$ to $y$ and for a set $A \subset \Z^d$ we write $x \overset{A}{\longleftrightarrow} y$ if there exists a path from $x$ to $y$ that lies entirely within the set $A$. For $x\in \Z^d$, we write $K_x = \{y \in \Z^d : x \leftrightarrow y\}$ for the open cluster containing $x$. For a set $A \subset \Z^d$, we define $K_x(A) \coloneqq \{y \in \Z^d : x \overset{A}{\longleftrightarrow} y\}$ as the open set containing $x$ within $A$. Also note that $K_x(A) = \{y \in \Z^d : x \overset{A}{\longleftrightarrow} y\} = \{y \in A : x \overset{A}{\longleftrightarrow} y\} \subseteq A$. For a set $S \subset \Z^d$, we define
\begin{equation*}
	K_S = \bigcup_{x\in S} K_x \ \ \ \ \text{ and } \ \ \ \ K_S(A) = \bigcup_{x\in S} K_x(A) = \left\{y \in A : y \overset{A}{\longleftrightarrow} S \right\}
\end{equation*}
as the set of points that can be reached from $S$ (within $A$).
For a percolation environment $\omega \in \{0,1\}^E, x,y \in \Z^d$, and $A\subseteq \Z^d$, we write $x \overset{A}{\longleftrightarrow} y$ in $\omega$ if there exists a path $(x=x_0,\ldots,x_\ell=y)$ such that $x_0,\ldots,x_{\ell} \in A$ and $\omega(\{x_{i-1},x_i\})=1$ for all $i\in \{1,\ldots, \ell\}$. We write $K_x(A;\omega)$ for the set
\begin{equation*}
	K_x(A;\omega) = \left\{y \in A: x \overset{A}{\longleftrightarrow} y \text{ in } \omega \right\} .
\end{equation*}
We say that a set $A \subset \Z^d$ is an \textbf{(open) $m$-pad} if $A=B_m(x)$ for some $x\in \Z^d$ and if $y \overset{A}{\longleftrightarrow} z$ for all $y,z\in A$. For an edge $e=\{x,y\}$, we write $|e|=|\{x,y\}|=\|x-y\|_\infty$ for the distance of its endpoints in the $\infty$-metric. For $n\in \N$ and $x,y \in \Z^d$, we write $x \overset{\leq n}{\longleftrightarrow} y$ if there exists an open path $\left(x=x_0,x_1,\ldots,x_\ell = y\right)$ with $\|x_{i}-x_{i-1}\|_\infty \leq n$ for all $i\in \{1,\ldots,\ell\}$.
For a kernel $J:\Z^d \to \left[0,\infty\right)$ and two (disjoint) sets $A,B\subset \Z^d$, we define
\begin{equation*}
	J(A,B) \coloneqq \sum_{x\in A} \sum_{y\in B} J(x-y).
\end{equation*}
In particular, this implies that for all $\beta \geq 0$ and all disjoint sets $A,B \subset \Z^d$
\begin{equation*}
	\p_{\beta,J} \left(A \nsim B\right) 
	= \prod_{x\in A} \prod_{y\in B} e^{-\beta J(x-y)}
	= e^{-\beta \sum_{x\in A} \sum_{y\in B} J(x-y)}
	= e^{-\beta  J(A,B)}.
\end{equation*}

For $x\in \Z^d$ and $A\subset \Z^d$, we also write $J(x,A)$ for $J(\{x\},A)$.
In many of our proofs, we will use {\sl sprinkling}. Let $E$ be the edge set of the complete graph upon $\Z^d$, i.e., $E=\left\{\{x,y\}: x,y\in \Z^d, x\neq y\right\}$. For an edge $e=\{x,y\} \in E$, we also write $J(e)\coloneqq J(x-y)$. We consider the percolation configuration as an element $\omega \in \{0,1\}^E$ and we regard an edge $e\in E$ as open if $\omega (e)=1$. 
To define sprinkling formally, we construct two collections of percolation environments $\left(\omega_\beta\right)_{\beta \geq 0}, (\omega_\beta^\prime )_{\beta \geq 0}$ as follows. Let $\left(U_e\right)_{e\in E}, \left(U_e^\prime\right)_{e\in E}$ be independent random variables that are uniformly distributed on the interval $\left[0,1\right]$. For all $e\in E$ and all $\beta \geq 0$, we define $\omega_\beta, \omega_\beta^\prime \in \{0,1\}^E$ by
\begin{align*}
	\omega_\beta(e)= \mathbbm{1} \left\{U_e \leq 1-\exp\left(-\beta J(e)\right)\right\} \ \ \ \text{ and } \ \ \  
	\omega_\beta^\prime(e)= \mathbbm{1} \left\{U_e^\prime \leq 1-\exp\left(-\beta J(e)\right)\right\}.
\end{align*}
From the definition it directly follows that $\p \left(\omega_\beta(e)=1\right) = 1-\exp \left(-\beta J(e)\right) = \p_{\beta,J} \left(e \text{ is open}\right)$ and the same equality also holds for $\omega^\prime_\beta$. Furthermore, this coupling is monotone in the sense that if $0 \leq \alpha \leq \beta $, then $\omega_{\alpha} \leq \omega_{\beta}$. For $\alpha,\beta \geq 0$, define $\omega = \omega_\beta \vee \omega_\alpha^\prime \in \{0,1\}^E$ by 
\begin{equation*}
	\omega(e) = \omega_\beta(e) \vee \omega_\alpha^\prime(e) = \max \left\{\omega_\beta(e) , \omega_\alpha^\prime(e)\right\}
\end{equation*}
for all $e\in E$. Then, by independence of $\omega_\beta$ and $ \omega_\alpha^\prime$,
\begin{align}
	\notag \p\left(\omega(e)=0\right) & = \p\left(\omega_\beta(e)=0 , \omega_\alpha^\prime(e)=0  \right)
	=
	\p\left(\omega_\beta(e)=0  \right) 
	\p\left( \omega_\alpha^\prime(e)=0  \right) = e^{-\alpha J(e)}  e^{-\beta J(e)}\\
	& \label{eq:maxim}
	= e^{-( \alpha +\beta ) J(e)} = 	\p\left(\omega_{\alpha + \beta}(e)=0\right)
\end{align}
which implies that $\omega = \omega_\beta \vee \omega_\alpha^\prime$ has the same distribution as $\omega_{\alpha+\beta}$. We will often consider this setup where we have a first percolation configuration $\omega_\beta$ and then sprinkle with the additional edges in $\omega_\alpha^\prime$ to obtain the new configuration $\omega = \omega_\beta \vee \omega_\alpha^\prime$.\\

For two probability measure $\p$ and $\p^\prime$ on $\{0,1\}^E$, we write $\p \gtrsim \p^\prime$ if $\p$ stochastically dominates $\p^\prime$, i.e., if $\p(A) \geq \p^\prime(A)$ for all increasing events $A \subseteq \{0,1\}^E$.

\section{The proof of Theorem \ref{theo:main}}\label{sec:2}

Throughout this section, we assume that $J$ is a symmetric and irreducible kernel satisfying the assumption of Theorem \ref{theo:main} $\left(J(x)=\mathcal{O}(\|x\|^{-2d})\right)$. We use this main assumption only at one point, namely in Lemma \ref{lem2}. Also the precise setup of the model is important for us. When increasing $\beta$, the probability $\p_{\beta}\left(\{x,y\} \text{ open}\right)$ increases for all edges $\{x,y\}$ with $J(x-y)>0$. This property is also used in the proof of Lemma \ref{lem4}. Many other arguments follow similar arguments as used by Grimmett and Marstrand \cite{grimmett1990supercritical}, respectively Meester and Steif \cite{meester1996continuity}.

\begin{lemma}\label{lem:totally connected}
	Let $J:\Z^d \to \left[0,\infty\right)$ be an irreducible and symmetric kernel. Then for all $m\in \N$ large enough, $\beta > 0$, and $x \in \{0,\ldots,m\}^d \eqqcolon A_m$
	\begin{equation*}
		\p_{\beta,J}\left(\mz \overset{A_m}{\longleftrightarrow} x\right) > 0
	\end{equation*}
	and 
	\begin{equation}\label{eq:tottaly connected}
		\p_{\beta,J}\left(x \overset{A_m}{\longleftrightarrow} y \text{ for all $x,y \in A_m$}\right) > 0.
	\end{equation}
\end{lemma}
\begin{proof}
	As the kernel $J$ is irreducible, there exists $N$ large enough so that the kernel $\tilde{J}$ defined by $\tilde{J}(v) = J(v) \mathbbm{1}_{\{\|v\|\leq N\}}$ is still irreducible. By definition, the kernel $\tilde{J}$ is also symmetric.
	Define $x_m \coloneqq \left(\lfloor m/2 \rfloor, \ldots, \lfloor m/2 \rfloor \right) \in \Z^d$. By symmetry of the kernel $\tilde{J}$, there exists $k\in \N$ such that for all $m\in \N$ large enough and $x\in A_m$ one has
	\begin{equation*}
		\p_{\beta,\tilde{J}} \left(B_k(x_m) \overset{A_m}{\longleftrightarrow} x \right) > 0 .
	\end{equation*}
	Indeed, this is possible as one can go from $x$ towards the direction of $x_m$, and with positive probability there thus exists an open path between $x$ and $B_k(x_m)$.
	Further, by the irreducibility of the kernel $\tilde{J}$, there is $K \geq k$ such that for all $y\in B_k(x_m)$
	\begin{equation*}
		\p_{\beta,\tilde{J}} \left( y \overset{B_K(x_m)}{\longleftrightarrow} x_m \right) > 0.
	\end{equation*}
	Let $m$ be large enough so that $B_K(x_m) \subset A_m$ (and thus also $B_k(x_m) \subset A_m$). Then for each $x\in A_m$, there exists with positive probability a path from $x$ to some $y \in B_{k}(x_m)$, and this path is entirely within $A_m$. Further, with positive probability, there is a path from $y$ to $x_m$ that is entirely within $B_K(x_m) \subset A_m$. So both paths have a positive probability of being open. Concatenating the two paths and using the FKG-inequality \cite[Section 2.2]{grimmett1999percolation} gives a path between $x$ and $x_m$ that is open with positive probability, i.e., $\p_{\beta,\tilde{J}} \left(x \overset{A_m}{\longleftrightarrow} x_m\right)>0$. Using the FKG-inequality once again, we see that for all $x\in A_m$, the probability that $x$ and $\mz$ are connected within $A_m$ is lower bounded by
	\begin{equation*}
		\p_{\beta,\tilde{J}} \left(\mz \overset{A_m}{\longleftrightarrow} x\right)
		\geq
		\p_{\beta,\tilde{J}} \left(\mz \overset{A_m}{\longleftrightarrow} x_m, x \overset{A_m}{\longleftrightarrow} x_m\right)
		\geq
		\p_{\beta,\tilde{J}} \left(\mz \overset{A_m}{\longleftrightarrow} x_m\right)
		\p_{\beta,\tilde{J}} \left(x \overset{A_m}{\longleftrightarrow} x_m\right)  > 0
	\end{equation*}
	and thus also $\p_{\beta,J} \left(\mz \overset{A_m}{\longleftrightarrow} x\right) \geq \p_{\beta,\tilde{J}} \left(\mz \overset{A_m}{\longleftrightarrow} x\right) > 0$.\\
	
	The proof of \eqref{eq:tottaly connected} follows by another application of the FKG-inequality:
	\begin{multline*}
		\p_{\beta,J}\left(x \overset{A_m}{\longleftrightarrow} y \text{ for all $x,y \in A_m$}\right) 
		= 
		\p_{\beta,J}\left(0 \overset{A_m}{\longleftrightarrow} x \text{ for all $x \in A_m$}\right)
		\\
		\geq
		\prod_{x \in A_m} \p_{\beta,J}\left(0 \overset{A_m}{\longleftrightarrow} x \right) > 0 .
	\end{multline*}
\end{proof}

In the next lemma, we prove that for two disjoint sets $A,B \subset \Z^d$ for which $J(A,B)$ is large, also the number of vertices $x\in B$ which are connected by an open edge to $A$, i.e., $\left|\left\{x\in B : x\sim A\right\}\right|$, is large with high probability.

\begin{lemma}\label{lem:new}
	Let $A,B \subset \Z^d$ with $A \cap B = \emptyset$. Define $\mu_\beta = \sum_{x\in \Z^d \setminus \{\mz\}} \beta J(x) = \beta J \left(\mz, \Z^d \setminus \{\mz\}\right)$. Then
	\begin{equation}\label{eq:new}
		\p_\beta \left( \big|\left\{x \in B : x \sim A\right\}\big| \leq \beta J(A,B) \frac{1\wedge \frac{1}{\mu_\beta}}{4} \right)
		\leq
		\frac{16 \left(\mu_\beta \vee 1\right) }{\beta J(A,B) }
	\end{equation}
\end{lemma}

\begin{proof}
	For each $x\in B$ we have
	\begin{align*}
		\p_\beta \left(x \sim A\right) &
		= 
		1- \prod_{y\in A} \p_\beta \left(x \nsim y\right) 
		= 
		1- \prod_{y\in A} \exp\left(-\beta J(x-y)\right) 
		=
		1- \exp \left( -\beta J(A,x) \right)
		\\
		&
		\geq
		\frac{(\beta J(A,x))\wedge 1}{2}
		=
		\beta J(A,x) \frac{1\wedge \frac{1}{\beta J(A,x)}}{2}
		\geq
		\beta J(A,x) \frac{1\wedge \frac{1}{\mu_\beta}}{2},
	\end{align*}
	where we used the elementary inequalities $1-e^{-s} \geq \frac{s \wedge 1}{2}$ and $\beta J(A,x) \leq \beta J(\mz,\Z^d \setminus \{\mz\}) = \mu_\beta$.
	Define the random variable $X \coloneqq \left|\left\{x \in B : x \sim A\right\}\right|$. Linearity of expectation implies that
	\begin{align}\label{eq:expectation X lower bound}
		\notag \E_\beta \left[X\right] & =
		\E_\beta \left[ \left|\left\{x \in B : x \sim A\right\}\right| \right] 
		= 
		\sum_{x\in B} \p_\beta \left(x \sim A\right)
		\geq
		\sum_{x\in B} \beta J(A,x) \frac{1\wedge \frac{1}{\mu_\beta}}{2}
		\\
		&
		=
		\beta
		J(A,B) \frac{1\wedge \frac{1}{\mu_\beta}}{2} .
	\end{align}
	A union bound over all $x\in B, y\in A$ implies that
	\begin{align}\label{eq:expectation bound}
		&\notag \E_\beta \left[X\right] = \E_\beta \left[ \left|\left\{x \in B : x \sim A\right\}\right| \right] 
		= 
		\sum_{x\in B} \p_\beta \left(x \sim A\right)
		\leq
		\sum_{x \in B} \sum_{y \in A} \p_\beta \left(x\sim y\right)
		\\
		&
		=
		\sum_{x \in B} \sum_{y \in A} \left(1-e^{-\beta J(x-y)}\right)
		\leq
		\sum_{x \in B} \sum_{y \in A} \beta J(x-y)
		=
		\beta J(A,B).
	\end{align}
	So we see that the expectation of $X= \left|\left\{x \in B : x \sim A\right\}\right|$ is of order $J(A,B)$. In order to say something about the typical value of $X = \left|\left\{x \in B : x \sim A\right\}\right|$, we calculate its variance. Note that the events of the form $\left\{x\sim A\right\}_{x\in B}$ are independent. Thus
	\begin{align*}
		\text{Var}\left(X\right) &
		=
		\sum_{x\in B} \text{Var}\left(\mathbbm{1}_{\{x\sim A\}}\right)
		=
		\sum_{x\in B} \left(\p_\beta \left(x \sim A\right) - \p_\beta \left(x \sim A\right)^2 \right)
		\\
		&
		\leq
		\sum_{x\in B} \p_\beta \left(x \sim A\right)
		=
		\E_\beta \left[ \left|\left\{x \in B : x \sim A\right\}\right| \right] 
		\overset{\eqref{eq:expectation bound}}{\leq} \beta J(A,B).
	\end{align*}
	Using Chebyshev's inequality, we see that
	\begin{align*}
		& \p_\beta \left( \left|\left\{x \in B : x \sim A\right\}\right| \leq \beta J(A,B) \frac{1\wedge \frac{1}{\mu_\beta}}{4} \right) 
		\overset{\eqref{eq:expectation X lower bound}}{\leq}
		\p_\beta \left( X - \E_\beta \left[X\right] \leq - \beta J(A,B) \frac{1\wedge \frac{1}{\mu_\beta}}{4}  \right)
		\\
		&
		\leq
		\frac{\text{Var}(X)}{\left(\beta J(A,B) \frac{1\wedge \frac{1}{\mu_\beta}}{4}\right)^2}
		\leq
		\frac{\beta J(A,B)}{\left(\beta J(A,B) \frac{1\wedge \frac{1}{\mu_\beta}}{4}\right)^2}
		=
		\frac{16 \left(\mu_\beta \vee 1\right) }{\beta J(A,B) } ,
	\end{align*}
	which finishes the proof.
\end{proof}

\begin{lemma}\label{lem1}
	Let $J:\Z^d \to \left[0,\infty\right)$ be a kernel and let $\beta > \beta_c(J)$. Let $\eps>0$ and let $m\in\N$ be such that $\p_{\beta}\left( B_m(\mz) \leftrightarrow \infty \right) > 1-\eps$. Then for all $L\in \N$ there exists $N\in \N$ such that for all $n\geq N$ and all sets $R$ with $B_m(\mz) \subseteq R \subseteq B_n(\mz)$
	\begin{equation*}
		\p_\beta \left(  J(K_R(B_n(\mz)), B_n(\mz)^c) > L \right) > 1-2\eps
	\end{equation*}
	where the set $K_R(B_n(\mz))= \bigcup_{x\in R} K_x(B_n(\mz))$ is the set of points that can be reached from $R$ within $B_n(\mz)$.
\end{lemma}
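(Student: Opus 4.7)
My plan is to first use monotonicity to reduce to the worst case $R = B_m(\mz)$ and then exploit a conditional identity tied to the hypothesis $\p_\beta(B_m(\mz) \leftrightarrow \infty) > 1 - \eps$. Since $R \supseteq B_m(\mz)$ implies $K_R(B_n(\mz)) \supseteq K_{B_m(\mz)}(B_n(\mz))$, we get the pointwise domination $J(K_R(B_n(\mz)), B_n(\mz)^C) \geq J(K_{B_m(\mz)}(B_n(\mz)), B_n(\mz)^C)$, so it suffices to prove the conclusion for $R = B_m(\mz)$.

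The key identity is the following. Because $K_{B_m(\mz)}(B_n(\mz))$ is determined by edges with both endpoints inside $B_n(\mz)$, and these are independent of the edges crossing $\partial B_n(\mz)$,
\begin{equation*}
\p_\beta\bigl(B_m(\mz) \nleftrightarrow B_n(\mz)^C \bigm| K_{B_m(\mz)}(B_n(\mz)) = C\bigr) = \exp\bigl(-\beta J(C, B_n(\mz)^C)\bigr).
\end{equation*}
Integrating out $C$ and using $\p_\beta(B_m(\mz) \leftrightarrow B_n(\mz)^C) \geq \p_\beta(B_m(\mz) \leftrightarrow \infty) > 1-\eps$, this gives the moment bound
\begin{equation*}
\E_\beta\bigl[\exp\bigl(-\beta J(K_{B_m(\mz)}(B_n(\mz)), B_n(\mz)^C)\bigr)\bigr] = \p_\beta\bigl(B_m(\mz) \nleftrightarrow B_n(\mz)^C\bigr) < \eps.
\end{equation*}

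The hardest part will be turning this moment estimate into the tail bound $\p_\beta(J \leq L) < 2\eps$ for arbitrary fixed $L$. A direct application of Markov's inequality gives only $\p_\beta(J \leq L) < \eps\,e^{\beta L}$, which is too weak once $L$ exceeds $(\log 2)/\beta$. The gap reflects that the hypothesis $\{B_m(\mz) \leftrightarrow \infty\}$ is strictly stronger than $\{B_m(\mz) \leftrightarrow B_n(\mz)^C\}$: on the former event the cluster must exit every box $B_{n'}(\mz)$ for $n' \geq n$, not just $B_n(\mz)$ itself. To bridge the gap I would use the decomposition
\begin{equation*}
\p_\beta(J \leq L) \leq \p_\beta(B_m(\mz) \nleftrightarrow \infty) + \p_\beta(B_m(\mz) \leftrightarrow \infty,\, J \leq L) < \eps + \p_\beta(B_m(\mz) \leftrightarrow \infty,\, J \leq L),
\end{equation*}
and argue that the second term tends to $0$ as $n \to \infty$. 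The intuition: on $\{B_m(\mz) \leftrightarrow \infty\}$ the cluster is infinite, so for large $n$ the within-box cluster $K_{B_m(\mz)}(B_n(\mz))$ should intersect the boundary region of $B_n(\mz)$ in a growing number of vertices, and each such vertex contributes a bounded-below amount to the exit $J$-weight by irreducibility and integrability of $J$. To make this quantitative I would iterate the identity above at a sequence of nested scales $m < n_1 < \cdots < n_K = n$ with $K$ large depending on $L$: on $\{B_m(\mz) \leftrightarrow \infty\}$ the cluster exits every intermediate box $B_{n_i}(\mz)$, producing exit contributions from disjoint annular regions $B_{n_{i+1}}(\mz) \setminus B_{n_i}(\mz)$, which one combines (via FKG or a direct geometric/decoupling argument in the spirit of Grimmett--Marstrand) into the desired tail bound on $J(K_{B_m(\mz)}(B_n(\mz)), B_n(\mz)^C)$.
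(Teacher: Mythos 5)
Your reduction to $R = B_m(\mz)$ and the conditional identity giving $\E_\beta\bigl[\exp\bigl(-\beta J(K_{B_m}(B_n),B_n^C)\bigr)\bigr]=\p_\beta(B_m\nleftrightarrow B_n^C)<\eps$ are both correct, and the decomposition isolating $\p_\beta(B_m\leftrightarrow\infty,\ J\leq L)$ as the term to kill is essentially the same quantity the paper handles: the paper argues by contraposition and Fatou, reducing to the fact that $\p_\beta(B_m\leftrightarrow\infty \text{ and } J(K_R(B_n),B_n^C)\leq L \text{ for infinitely many }n)=0$, which it cites as Lemma~2.6 of Meester--Steif. So the skeleton of your proof and the paper's agree up to contraposition; the crux in both is the same ``infinitely-often'' lemma.

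The gap is in your proposed proof of that crux. You suggest lower-bounding $J(K_{B_m}(B_n),B_n^C)$ by accumulating ``exit contributions from disjoint annular regions'' at nested scales $n_1<\cdots<n_K=n$, each bounded below by irreducibility/integrability. This does not work: the quantity $J(K_R(B_n),B_n^C)$ only counts pairs with one endpoint in $K_R(B_n)$ and the other outside $B_n$, so contributions from vertices of $K_R(B_n)$ deep inside $B_n$ (in the inner annuli $B_{n_{i+1}}\setminus B_{n_i}$) become negligible as $n\to\infty$ for any integrable kernel. Irreducibility gives no per-vertex lower bound on exit weight, and FKG/Grimmett--Marstrand decoupling is not the right tool. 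The correct mechanism is sequential exploration with a Borel--Cantelli/martingale flavor: each time one reaches a scale $n$ where $J(K_R(B_n),B_n^C)\leq L$, conditional on the edges revealed inside $B_n$ there is probability at least $e^{-\beta L}$ that every edge from $K_R(B_n)$ to $B_n^C$ is closed, in which case $K_R$ is finite; iterating over the (random) scales where the bound holds shows that if they were infinitely many with positive probability, the cluster would a.s.\ be severed at one of them, contradicting $B_m\leftrightarrow\infty$. Without this ``try to kill the cluster at each bad scale'' argument, your plan does not close.
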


\begin{proof}
	Note that for all sets $R\supseteq B_m(\mz)$ we have that $K_R(B_n(\mz)) \supseteq K_{B_m(\mz)}(B_n(\mz))$, so it suffices to show the claim for $R=B_m(\mz)$. We set $R=B_m(\mz)$ for the rest of the proof. For the proof itself, we use a contraposition. So assume that there are infinitely many $n\in \N$ such that
	\begin{equation*}
		\p_\beta \left(  J(K_R(B_n(\mz)), B_n(\mz)^c) > L \right) \leq 1-2\eps
	\end{equation*}
	or equivalently
	\begin{equation*}
		\p_\beta \left(  J(K_R(B_n(\mz)), B_n(\mz)^c) \leq L \right) \geq 2\eps.
	\end{equation*}
	Then we also get by Fatou's Lemma that
	\begin{align*}
		&\p_\beta \left(  J(K_R(B_n(\mz)), B_n(\mz)^c) \leq L \text{ for infinitely many $n\in \N$} \right) \\
		& \hspace{5cm} \geq \limsup_{n \to \infty}  \p_\beta \left(  J(K_R(B_n(\mz)), B_n(\mz)^c) \leq L \right) \geq 2\eps.
	\end{align*}
	Together with $\p_\beta \left( R \leftrightarrow \infty\right) > 1-\eps$ this implies that 
	\begin{equation*}
		\p_\beta \left( R \leftrightarrow \infty \text{ and } J(K_R(B_n(\mz)), B_n(\mz)^c) \leq L \text{ for infinitely many $n\in \N$} \right) \geq \eps
	\end{equation*}
	which is a contradiction, as this probability needs to be $0$, see for example \cite[Lemma 2.6]{meester1996continuity}.
\end{proof}

\begin{definition}
	For $\delta > 0$ and $m,n \in \N$ we define the set
	\begin{equation*}
		P_{m,n}^\delta = \left\{ x \in B_{(1+\delta) n}(\mz) \setminus B_n(\mz) : x \text{ is in an open $m$-pad } A \subset B_{(1+\delta) n}(\mz) \setminus B_n(\mz)\right\}
	\end{equation*}
	as the union of all open $m$-pads within $B_{(1+\delta) n}(\mz) \setminus B_n(\mz)$. 
\end{definition}

The next lemma is the key lemma that uses the assumption on the kernel $J$ that $J(x)=\mathcal{O}(\|x\|^{-2d})$. A similar version was proven by Meester and Steif \cite[Lemma A]{meester1996continuity}.

\begin{lemma}\label{lem2}
	Let $J$ be an irreducible kernel so that $J(x)=\mathcal{O}(\|x\|^{-2d})$ and let $\beta > \beta_c(J)$. Let $\eps>0$ and let $m\in\N$ be large enough such that $\p_{\beta}\left( B_m(\mz) \leftrightarrow \infty \right) > 1-\eps$ and such that the results of Lemma \ref{lem:totally connected} hold for the box $\{0,\ldots,2m+1\}^d$. Then there exists $N\in \N$ such that for all $n\geq N$ and all sets $B_m(\mz) \subseteq R \subseteq B_n(\mz)$
	\begin{equation*}
		\p_\beta \left( K_R(B_n(\mz)) \sim P_{m,n}^\delta \right) > 1-3\eps.
	\end{equation*}
\end{lemma}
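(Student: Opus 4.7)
The plan is to run a sprinkling argument in the spirit of Meester and Steif~\cite{meester1996continuity}, combining the abundance of kernel-mass emitted from $K_R(B_n(\mz))$ to $B_n(\mz)^C$ (provided by Lemma~\ref{lem1}) with the fact that every radius-$m$ box inside the annulus has a positive probability of being an open $m$-pad. The decay hypothesis $J(x)=\mathcal{O}(\|x\|^{-2d})$ enters precisely at the step where one wants to localize that mass inside $S_n^{(1+\delta)n}$ instead of losing it to points at distance much larger than $n$.

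First, I would fix $L\in\N$ (to be chosen at the end) and use Lemma~\ref{lem1} to find $N$ so that, for every $n\geq N$ and every admissible $R$, the event $E_L=\{J(K_R(B_n(\mz)),B_n(\mz)^C)>L\}$ has probability at least $1-2\eps$. A standard tail computation using $J(x)=\mathcal{O}(\|x\|^{-2d})$ gives $J(B_n(\mz), B_{(1+\delta)n}(\mz)^C)\leq C_\delta$ uniformly in $n$, so on $E_L$ most of the mass is carried into $S_n^{(1+\delta)n}$. I would then partition a deep sub-annulus of $S_n^{(1+\delta)n}$ into disjoint cubes $\{B_m(y_i)\}_i$, all contained in the annulus, and for each $i$ write $F_i$ for the event that $B_m(y_i)$ is an open $m$-pad. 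The $F_i$'s depend on pairwise-disjoint edge sets, so they are mutually independent with $\p(F_i)\geq\rho>0$ (by FKG and irreducibility), and independent also of the edges inside $B_n(\mz)$ (which determine $K\coloneqq K_R(B_n(\mz))$) and of the crossing edges that produce $H_i=\{K\sim B_m(y_i)\text{ by a direct open edge}\}$. Conditionally on $K$,
\begin{equation*}
\p\bigl(K\not\sim P_{m,n}^\delta\bigm|K\bigr)
\leq \prod_i\bigl(1-\rho\,\p(H_i\mid K)\bigr)
\leq \exp\!\Bigl(-\rho\sum_i \bigl(1-e^{-\beta J(K,B_m(y_i))}\bigr)\Bigr).
\end{equation*}
Because each $J(K,B_m(y_i))\leq (2m+1)^d\|J\|_1$ is uniformly bounded, the inequality $1-e^{-t}\geq c\,t$ holds on the relevant range, so the exponent is at least $c\rho\,J\bigl(K,\bigcup_i B_m(y_i)\bigr)$. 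I would then lower-bound this by $c\rho(L-C')$ after subtracting the mass landing in the part of $B_n(\mz)^C$ not covered by the tiling, and choose $L$ large enough that $c\rho(L-C')>\log(1/\eps)$, which together with $\p(E_L)>1-2\eps$ yields $\p(K\sim P_{m,n}^\delta)>1-3\eps$.

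The hard part will be controlling the ``inner strip'' $S_n^{n+2m}$ that the tiling cannot cover: the decay $J(x)=\mathcal{O}(\|x\|^{-2d})$ only bounds the outer tail $J(B_n,B_{(1+\delta)n}^C)$ by a constant, whereas $J(B_n,S_n^{n+2m})$ is of surface order $n^{d-1}$ and so cannot be absorbed into a constant $C'$ via the decay alone. Handling this cleanly will require either strengthening Lemma~\ref{lem1} to produce an $L$ that grows with $n$ faster than this surface term (which is plausible on $\{B_m(\mz)\leftrightarrow\infty\}$, where the cluster in $B_n(\mz)$ has linear density and therefore emits mass of order at least $n^{d-1}$), or replacing $J(K_R(B_n),B_n^C)$ in the definition of $E_L$ by the mass landing specifically in the tiled sub-annulus and combining the $\mathcal{O}(\|x\|^{-2d})$ decay with a coarse concentration argument to show that this deep-annulus mass is also unbounded with high probability.
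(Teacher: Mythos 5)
Your proposal follows the paper's overall strategy closely (Lemma~\ref{lem1}, the $\mathcal{O}(\|x\|^{-2d})$ bound on $J(B_n(\mz),B_{(1+\delta)n}(\mz)^C)$ to localize mass in $S_n^{(1+\delta)n}$, then independence of $m$-pads), but the way you implement the final step --- a partition of a sub-annulus into \emph{disjoint} cubes $B_m(y_i)$ and a lower bound on $J\bigl(K,\bigcup_i B_m(y_i)\bigr)$ --- creates the problem you yourself diagnose and do not resolve. The untiled strip (of width $O(m)$ at the inner boundary) can absorb mass of surface order $n^{d-1}$ in total, and more to the point there is no way to rule out that, on a given configuration, a constant fraction of the emitted mass from $K$ lands precisely there; so the subtraction $J(K,\text{tiled}) \geq J(K,S_n^{(1+\delta)n}) - C'$ is unavailable. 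Neither of your proposed patches (strengthening Lemma~\ref{lem1} to a quantity growing faster than $n^{d-1}$, or a ``coarse concentration'' estimate for the deep sub-annulus) is carried out or obviously true, so as written the argument does not close.

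The paper sidesteps this entirely by never tiling. It conditions on $J(K_R(B_n(\mz)),S_n^{(1+\delta)n}) > L/2$, uses independence of the edges between $K_R$ and the annulus to conclude that, with probability at least $1-\eps/2$, at least $M$ points $x\in S_n^{(1+\delta)n}$ are joined to $K_R$ by an open edge, and then observes that \emph{every} point of $S_n^{(1+\delta)n}$ (including those within distance $2m$ of the inner boundary) lies in some $m$-cube entirely contained in the annulus and so has a uniformly positive probability of being in $P_{m,n}^\delta$. A standard packing argument extracts from those $M$ points a subfamily of size $\gtrsim M/(10m+1)^d$ that are pairwise $\infty$-separated by more than $5m$; the corresponding ``$x\in P_{m,n}^\delta$'' events are independent and also independent of the connection edges to $K_R$, so taking $M$ (hence $L$) large gives the claim. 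In short, the missing idea in your write-up is that $P_{m,n}^\delta$ allows the witnessing $m$-pad to sit anywhere in the annulus relative to $x$; once one argues about \emph{points hit by direct edges} rather than about \emph{mass landing in a fixed tiling}, there is no inner strip to control and the $O(\|x\|^{-2d})$ assumption is used only once, exactly as in your first paragraph.
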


\begin{proof}
	As in the proof of Lemma \ref{lem1}, it suffices to prove this result for $R=B_m(\mz)$.
	The important observation here is that 
	\begin{align*}
		J(K_R(B_n(\mz)), S_n^{(1+\delta)n} ) & =  
		J(K_R(B_n(\mz)), B_{n}(\mz)^c ) - J(K_R(B_n(\mz)), B_{(1+\delta)n}(\mz)^c )\\
		& \geq 
		J(K_R(B_n(\mz)), B_{n}(\mz)^c ) - J(B_n(\mz), B_{(1+\delta)n}(\mz)^c )
	\end{align*}
	and that the quantity $J(B_n(\mz), B_{(1+\delta)n}(\mz)^c ) = \sum_{x \in B_n(\mz)} \sum_{y \notin B_{(1+\delta)n}(\mz)} J(x-y)$ is uniformly bounded over all $n\in \N$, as $J(x-y)=\mathcal{O}(\|x-y\|^{-2d})$. Note that this is the essential step (and the only time in the proof of Theorem \ref{theo:main}) where we use the requirement that $J(x)=\mathcal{O}(\|x\|^{-2d})$. In particular, for $L$ large enough we have that
	\begin{equation*}
		\text{ if } J(K_R(B_n(\mz)), B_{n}(\mz)^c ) > L, \text{ then } J(K_R(B_n(\mz)), S_n^{(1+\delta)n} ) > \frac{L}{2}
	\end{equation*}
	and thus, by Lemma \ref{lem1}, we get that for all large enough $L > 0$ one has for all large enough $n$ that
	\begin{equation*}
		\p_\beta \left( J(K_R(B_n(\mz)), S_n^{(1+\delta)n} ) > \frac{L}{2} \right) \geq 1-2\eps.
	\end{equation*}
	Next, we argue that there exists $L< \infty$ such that 
	\begin{equation}\label{eq:lem2toshow}
		\p_{\beta} \left( K_R(B_n(\mz)) \sim P_{m,n}^\delta \ \Big| \   J(K_R(B_n(\mz)), S_n^{(1+\delta)n} ) > \frac{L}{2} \right) > 1-\eps,
	\end{equation}
	which then implies 
	\begin{align*}
		&\p_{\beta} \left( K_R(B_n(\mz)) \sim P_{m,n}^\delta  \right)\\
		&
		\geq
		\p_{\beta} \left( K_R(B_n(\mz)) \sim P_{m,n}^\delta \ \Big| \  J(K_R(B_n(\mz)), S_n^{(1+\delta)n} ) > \frac{L}{2} \right) \p_{\beta} \left( J(K_R(B_n(\mz)), S_n^{(1+\delta)n} ) > \frac{L}{2} \right)\\
		& \geq (1-\eps)(1-2\eps) \geq 1-3\eps .
	\end{align*}
	So we are left to show that \eqref{eq:lem2toshow} holds for $L$ large enough. 
	Conditioned on the set $K_R(B_n(\mz))$, the edges $\{x,y\}$ with $x \in K_R(B_n(\mz))$ and $y \in S_n^{(1+\delta)n}$ are still open with probability $1-e^{-\beta J(x-y)}$. So by Lemma \ref{lem:new} applied with $A=K_R(B_n(\mz)), B = S_n^{(1+\delta)n}$, we get for $L$ large enough that 
	\begin{align}\label{eq:mpad to}
		\notag & \p_{\beta} \left( \left|\left\{x \in S_n^{(1+\delta)n} : x \sim K_R(B_n(\mz)) \right\}\right| > \beta \frac{L}{2} \frac{1\wedge \frac{1}{\mu_\beta}}{4} \ \Big| \  J(K_R(B_n(\mz), S_n^{(1+\delta)n}) > \frac{L}{2} \right) 
		\\
		&
		\geq 1-\frac{16 \left(\mu_\beta \vee 1\right) }{\beta \frac{L}{2} } \geq 1-\frac{\eps }{2}.
	\end{align}
	For $n$ large enough, for each $x\in S_n^{(1+\delta)n}$ there exist $u \in S_n^{(1+\delta)n}$ with $x \in B_m(u) \subset S_n^{(1+\delta)n}$. Thus,
	\begin{multline*}
		\p_\beta\left(x \in P_{m,n}^{\delta}\right) \geq \p_\beta \left(a \overset{B_m(u)}{\longleftrightarrow} b \text{ for all } a,b \in B_m(u) \right) 
		\\
		= 
		\p_\beta \left(a \overset{\{0,\ldots,2m+1\}^d}{\longleftrightarrow} b \text{ for all } a,b \in \{0,\ldots,2m+1\}^d \right) \overset{\eqref{eq:tottaly connected}}{\geq} c_m > 0,
	\end{multline*}
	where we used Lemma \ref{lem:totally connected} for the second-to-last inequality, and where $c_m$ is a positive constant. For points $x,y \in S_n^{(1+\delta)n}$ with $\|x-y\|_\infty \geq 5m$ it is independent whether they are elements of $P_{m,n}^\delta$. Using this independence one sees that
	\begin{equation*}
		 \p_{\beta} \left( K_R(B_n(\mz)) \sim P_{m,n}^\delta  \ \Big| \ \left|\left\{x \in S_n^{(1+\delta)n} : x \sim K_R(B_n(\mz)) \right\}\right| > \beta \frac{L}{2} \frac{1\wedge \frac{1}{\mu_\beta}}{4}  \right) \geq 1-\frac{\eps}{2}
	\end{equation*}
	for $L$ large enough. Together with \eqref{eq:mpad to} this implies \eqref{eq:lem2toshow} and thus finishes the proof.
\end{proof}

\begin{lemma}\label{lem3}
	Let $J:\Z^d \to \left[0,\infty\right)$ be an irreducible kernel with $J(x)=\mathcal{O}(\|x\|^{-2d})$ and let $\beta > \beta_c(J)$. For all $\eps,\delta,K>0$ there exist $m,N\in \N$ such that for all $n\geq N$ there exists $y = y(n) \in \partial B_n(\mz) = \{x:\|x\|_\infty = n\}$ such that for all sets $B_m(\mz) \subseteq R \subseteq B_n(\mz)$
	\begin{equation}\label{eq:lem4 cond1}
		\p_\beta \left( K_R(B_n(\mz)) \sim P_{m,n}^{\delta,y} \right) > 1-\eps
	\end{equation}
	and
	\begin{equation}\label{eq:lem4 cond2}
		\p_\beta \left( J (R,W_y^n) > K \right) > 1-\eps,
	\end{equation}
	where the sets $P_{m,n}^{\delta,y}$ and $W_y^n$ are defined by
	\begin{equation*}
		P_{m,n}^{\delta,y} =\left\{x \in P_{m,n}^{\delta} : x \text{ is contained in an $m$-pad $A\subset S_n^{(1+\delta)n}$ such that $A \subset B_{\delta n}(y)$ } \right\}
	\end{equation*}
	and
	\begin{align*}
		W_y^n = P_{m,n}^{\delta,y} \cup \bigcup_{x \in B_n(\mz) : x \sim  P_{m,n}^{\delta,y}} K_x\left(B_{n}(\mz)\setminus R\right) .
	\end{align*}
	See Figure \ref{fig:m pad at y} for a picture of this setup.
\end{lemma}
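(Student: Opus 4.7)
My plan is to reduce Lemma \ref{lem3} to Lemmas \ref{lem1} and \ref{lem2} via a shift-and-symmetry argument that pins down the boundary direction $y(n)$. Condition \eqref{eq:lem4 cond1} is monotone increasing in $R$: if $R \subseteq R'$ then $K_R(B_n(\mz)) \subseteq K_{R'}(B_n(\mz))$, so every connection to $P_{m,n}^{\delta,y}$ is preserved, and it suffices to treat $R = B_m(\mz)$. The set $B_m(\mz)$ is invariant under the hyperoctahedral group $G$ of signed coordinate permutations, and so is $\p_\beta$ by symmetry of $J$.

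I would choose $m$ large enough that $\p_\beta(B_m(\mz) \leftrightarrow \infty) > 1 - \eps/8$ (possible since $\beta > \beta_c$), and fix a specific $y = y(n) \in \partial B_n(\mz)$, say $y = (n,0,\ldots,0)$. For $n$ large, ergodicity and the uniqueness of the infinite cluster give $\p_\beta(\cC_\infty \cap B_{\delta n/2}(y) \neq \emptyset) > 1 - \eps/8$, so with probability at least $1 - \eps/4$ both events hold. A Grimmett--Marstrand-style renormalization then shows that on this joint event, $K_{B_m(\mz)}(B_n(\mz))$ essentially coincides with $\cC_\infty \cap B_n(\mz)$ up to negligible ``holes'', and in particular intersects $B_{\delta n/2}(y)$ with probability $> 1 - \eps/3$. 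Re-applying Lemma \ref{lem2} with origin shifted to such a cluster vertex $u \in B_{\delta n/2}(y)$ produces an open $m$-pad in the local shifted annulus, which by construction lies in $B_{\delta n}(y) \cap S_n^{(1+\delta)n}$, hence in $P_{m,n}^{\delta,y}$. Combining the three estimates gives \eqref{eq:lem4 cond1}.

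For condition \eqref{eq:lem4 cond2}, I would apply Lemma \ref{lem1} with $L$ taken large enough (depending on $K, \delta, d$) to ensure $J(K_R(B_n(\mz)), B_n(\mz)^C) > L$ with probability $> 1 - \eps$. By the $\mathcal{O}(\|x\|^{-2d})$ decay, the weight concentrates in the annulus $S_n^{(1+\delta)n}$, as in the proof of Lemma \ref{lem2}. Covering $\partial B_n(\mz)$ by $O_\delta(1)$ many $G$-invariant caps and using symmetry shows that a positive fraction of this weight lies in $B_{\delta n}(y)$; since $W_y^n \supseteq P_{m,n}^{\delta,y}$ and $R \supseteq B_m(\mz)$, this translates to $J(R, W_y^n) > K$ with the desired probability.

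The main obstacle is ensuring that $K_{B_m(\mz)}(B_n(\mz))$ really behaves like the restriction $\cC_\infty \cap B_n(\mz)$, in particular that it reaches the specific cap near $y(n)$ without losing too much probability to open paths forced to exit $B_n(\mz)$ and re-enter. Controlling these ``exit losses'' is the renormalization core of the Grimmett--Marstrand strategy and crucially exploits both $\beta > \beta_c$ and the fast-decay assumption $J(x) = \mathcal{O}(\|x\|^{-2d})$ — the latter keeps the contribution of very long edges that would bypass $B_n(\mz)$ under control.
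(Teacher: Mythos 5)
Your proposal diverges from the paper's proof in two places where the paper's argument is genuinely subtler, and both divergences introduce gaps that would not survive scrutiny.

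For \eqref{eq:lem4 cond1}, you fix a specific $y = n e_1$ and invoke ``a Grimmett--Marstrand-style renormalization'' to argue that $K_{B_m(\mz)}(B_n(\mz))$ essentially coincides with $\cC_\infty \cap B_n(\mz)$. This is circular: the renormalization comparison with a mixed site-bond process is the \emph{output} of Lemmas \ref{lem1}--\ref{lem4}, not a tool available while proving Lemma \ref{lem3}. More to the point, the assertion that the cluster of $B_m(\mz)$ inside $B_n(\mz)$ approximates $\cC_\infty \cap B_n(\mz)$ ``up to negligible holes'' is precisely the kind of statement the whole argument is designed to establish; one cannot assume it here. The paper instead pins the direction $y$ down by a finite-union / square-root trick: one covers the annulus $S_n^{(1+\delta)n}$ by at most $Y$ caps $B_{\delta n}(y)$, $y\in\mathcal{Y}_n$, notes that $\bigcup_{y\in\mathcal{Y}_n}\{K_R(B_n(\mz))\sim P_{m,n}^{\delta,y}\} = \{K_R(B_n(\mz))\sim P_{m,n}^{\delta}\}$, and deduces from Lemma \ref{lem2} and FKG that at least one $y$ must satisfy
\begin{equation*}
\p_\beta\bigl(K_R(B_n(\mz))\sim P_{m,n}^{\delta,y}\bigr) \geq 1 - \bigl(1 - \p_\beta(K_R(B_n(\mz))\sim P_{m,n}^{\delta})\bigr)^{1/Y}.
\end{equation*}
Symmetry alone does not let you pre-select $y = n e_1$: symmetry says all caps are exchangeable in distribution, not that any specific cap is hit with high probability.

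For \eqref{eq:lem4 cond2}, you attempt to transfer the lower bound $J\bigl(K_R(B_n(\mz)), B_n(\mz)^C\bigr) > L$ (Lemma \ref{lem1}) into a lower bound for $J(R, W_y^n)$. This does not work because the two quantities have different left arguments ($K_R(B_n(\mz))$ is a potentially large random cluster; $R \supseteq B_m(\mz)$ may be as small as $B_m(\mz)$) and different right arguments ($B_n(\mz)^C$ versus $W_y^n$, which lies mostly \emph{inside} $B_n(\mz)$). No covering/symmetry manipulation bridges these. The paper's argument is instead a probability-not-mass argument: one first strengthens \eqref{eq:lem4 cond1} to $\p_\beta(K_R(B_n(\mz))\nsim P_{m,n}^{\delta,y}) \leq e^{-\beta K}\eps$, then observes that conditionally on $\{J(R,W_y^n)\leq K\}$ one can close all $R$--$W_y^n$ edges with probability at least $e^{-\beta K}$ (since $W_y^n$ is measurable with respect to edges having no endpoint in $R$), and closing these edges forces $K_R(B_n(\mz))\nsim P_{m,n}^{\delta,y}$. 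Comparing the two probabilities gives $\p_\beta(J(R,W_y^n)\leq K) \leq \eps$. This step is where the hard work is, and it has no analogue in your proposal.
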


\begin{figure}
	\begin{center}
		\begin{tikzpicture}[scale=0.03]

			\draw[gray] (-100,-100) rectangle (100,100);
			\draw[gray] (-130,-130) rectangle (130,130);
			
			\fill[lightblue] (-9,-9) rectangle (9,9);
			
			\draw[pattern=north west lines, pattern color=darkgreen, thick] (100,25) rectangle (130,85);
			\draw[ color=darkgreen, thick] (100,25) rectangle (130,85);

			\vertex[fill ,minimum size = 3 pt, label={[left] $y$},  ]  at (100,55) {};

			\fill[lightblue] (104,57) rectangle (104+18,57+18);

			\draw[->, lightgray] (-135,0) -- (143,0) node[right] {$e_1$};
			\draw[->, color=lightgray] (0,-135) -- (0,143) node[above] {$e_2$};

			\draw[black, thick] (7,4) to[out=60, in=170] (35,18);
			\draw[black, thick] (35,18) to[out=60, in=170] (44,20);
			\draw[black, thick] (44,20) to[out=60, in=230] (60,50);
			\draw[black, thick] (60,50) to[out=60, in=170] (80,70);
			\draw[black, thick] (80,70) to[out=60, in=120] (110,70);
			
			\vertex[fill ,minimum size = 2 pt, label={},  ]  at (7,4) {};
			\vertex[fill ,minimum size = 2 pt, label={},  ]  at (35,18) {};
			\vertex[fill ,minimum size = 2 pt, label={},  ]  at (44,20) {};
			\vertex[fill ,minimum size = 2 pt, label={},  ]  at (60,50) {};
			\vertex[fill ,minimum size = 2 pt, label={},  ]  at (80,70) {};
			\vertex[fill ,minimum size = 2 pt, label={},  ]  at (110,70) {};
			
			\draw[decorate, decoration={brace, amplitude=6pt}] (-155,-130) -- node[left=0.2cm] {\( \delta n \)} (-155,-100);
			\draw[decorate, decoration={brace, amplitude=6pt}] (-155,-100) -- node[left=0.2cm] {\( 2n \)} (-155,100);
			\draw[decorate, decoration={brace, amplitude=6pt}] (-155,100) -- node[left=0.2cm] {\( \delta n \)} (-155,130);
			
			\draw[decorate, decoration={brace, amplitude=6pt}] (140,85) -- node[right=0.2cm] {\( 2\delta n \)} (140,25);
		\end{tikzpicture}
		
		\parbox{14cm}{ \caption{ An illustration of the statement of Lemma \ref{lem3}: The inner blue square $(R)$ is connected by an open path (the black edges) to an open $m$-pad (the outer blue square) in $S_n^{(1+\delta)n} \cap B_{\delta n}(y)$ (the green hatched area).}\label{fig:m pad at y}}
	\end{center}
\end{figure}

\begin{proof}
	We start with the proof of \eqref{eq:lem4 cond1}. As in the preceding lemmas, it suffices to show the claim for $R=B_m(\mz)$.
	For fixed $\delta > 0$ we can choose a family of sets $\left(\mathcal{Y}_n\right)_{n\in \N}$ such that $\mathcal{Y}_n \subset \partial B_{n}(\mz)$ for all $n\in \N$, $Y\coloneqq \sup_{n\in \N} |\mathcal{Y}_n|< \infty$, and such that for all $m < \tfrac{\delta}{3} n$, if $A\subset S_n^{(1+\delta)n}$ is an open $m$-pad, then $A \subset S_n^{(1+\delta)n} \cap B_{\delta n}(y)$ for some $y\in \mathcal{Y}_n$. The set $\mathcal{Y}_n$ can be constructed by taking points $y \in \partial B_n(\mz)$ that have an Euclidean distance of order $\delta n$. These properties of $\mathcal{Y}_n$ imply that 
	\begin{equation*}
		 \bigcup_{y\in \mathcal{Y}_n} \left\{K_R(B_n(\mz)) \sim P_{m,n}^{\delta,y}\right\}  = \left\{K_R(B_n(\mz)) \sim P_{m,n}^{\delta}\right\} .
	\end{equation*}
	As all the events $\left\{K_R(B_n(\mz)) \sim P_{m,n}^{\delta,y}\right\}$ are increasing, we get by the FKG-inequality \cite[Section 2.2]{grimmett1999percolation} (respectively the ``square-root-trick") that 
	\begin{equation*}
		\max_{y\in \mathcal{Y}_n}\p_\beta \left( K_R(B_n(\mz)) \sim P_{m,n}^{\delta,y} \right) \geq 1- \left( 1- \p_\beta \left( K_R(B_n(\mz)) \sim P_{m,n}^{\delta} \right) \right)^{1/Y} .
	\end{equation*}
	By Lemma \ref{lem2}, the expression on the right-hand side of this inequality can be arbitrarily close to $1$ for suitable choice of $m,N$ and all $n\geq N$, as $Y< \infty$.  Thus also the expression on the left-hand side of this inequality $\left(\p_\beta \left( K_R(B_n(\mz)) \sim P_{m,n}^{\delta,y} \right)\right)$ will be arbitrarily close to $1$ for an appropriate choice of $m,N$ and all $n\geq N$. In particular, for fixed $\eps, \delta, K > 0$ there exist $m,N \in \N$ such that for all $n\geq N$ there exists $y\in \mathcal{Y}_n \subset \partial B_n(\mz)$ such that
	\begin{equation}\label{eq:lem4 expo insertion}
		\p_\beta \left( K_R(B_n(\mz)) \nsim P_{m,n}^{\delta,y} \right) \leq e^{-\beta K} \eps.
	\end{equation}
	
	As $e^{-\beta K} < 1$, this directly implies \eqref{eq:lem4 cond1}.
	Next, let us go to the proof of \eqref{eq:lem4 cond2}. Let $R\subseteq B_n(\mz)$ with $R\supseteq B_m(\mz)$.
	Conditioned on the event $\left\{J (R,W_y^n) \leq K\right\}$, there is a probability of at least $e^{-\beta K}$ that all edges between $R$ and $W_y^n$ are closed. However, if all edges between $R$ and $W_y^n$ are closed, then $K_R(B_n(\mz)) \nsim P_{m,n}^{\delta,y}$, and thus we get that
	\begin{align*}
		\p_\beta \left( K_R(B_n(\mz)) \nsim P_{m,n}^{\delta,y} \right) 
		& \geq 
		\p_\beta \left( J (R,W_y^n) \leq K \right) \p_\beta \left( K_R(B_n(\mz)) \nsim P_{m,n}^{\delta,y} \big|  J (R,W_y^n) \leq K \right)
		\\
		&
		\geq \p_\beta \left( J (R,W_y^n) \leq K \right)   e^{-\beta K} ,
	\end{align*}
	which directly implies that $\p_\beta \left( J (R,W_y^n) \leq K \right) \leq \eps$, by \eqref{eq:lem4 expo insertion}.
\end{proof}

In the previous proofs, we did not use the assumption that $\beta > \beta_c$. Instead, we only used that $\theta(\beta) = \p_\beta \left( |K_\mz|=\infty \right) > 0$ (which is conjectured to be equivalent to $\beta > \beta_c$ in dimensions $d\geq 2$). However, in the next few lemmas, we will use that $\beta > \beta_c$, as the proofs of these results require sprinkling as introduced in section \ref{sec:notation}.

\begin{lemma}\label{lem4}
	Let $J:\Z^d \to \left[0,\infty\right)$ be a symmetric and irreducible kernel with $J(x)=\mathcal{O}(\|x\|^{-2d})$ and let $\beta > \beta_c(J)$. For all $\eps^\prime,\delta >0$, there exist $m,N \in \N$ such that for all even $n \geq N$ 
	\begin{equation}\label{eq:lem4 eq1}   
		\p_\beta \left( B_m(\mz) \overset{B_{(1+\delta)n}(\mz)}{\longleftrightarrow} F_{m}^{\delta}(n e_i) \right) > 1-\eps^\prime
	\end{equation}  
	where $F_{m}^{\delta}(n e_i)$ is the set 
	\begin{equation*}
		F_{m}^{\delta}(n e_i) = \left\{x \in B_{\delta n}(n e_i) : x \text{ is contained in an open $m$-pad } \tilde{A} \subset B_{\delta n}(n e_i) \right\}.
	\end{equation*}
	Further, let $\eps^\prime,\delta, K>0$. Then there exists $m,N\in \N$ such that for all even $n\geq N$ and all sets $B_m(\mz) \subseteq R \subseteq B_{(1+\delta) n}(\mz) \setminus B_{\delta n}(n e_i)$
	\begin{equation}\label{eq:lem4 eq2}
		\p_\beta \left( J \left(R,\widetilde{W}\right) > K \right) > 1-\eps^\prime
	\end{equation}
	where the set $\widetilde{W}$ is defined by
	\begin{align*}
		\widetilde{W} = F_{m}^{\delta}(n e_i) \cup \bigcup_{x \in F_{m}^{\delta}(n e_i)}  K_x\left(B_{(1+\delta) n}(\mz)\setminus R\right) .
	\end{align*}   
\end{lemma}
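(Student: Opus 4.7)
The plan is to derive \eqref{eq:lem4 eq1} from Lemma \ref{lem3} by an iteration-with-sprinkling scheme that steers the connection into the specific direction $ne_i$, and then to obtain \eqref{eq:lem4 eq2} from \eqref{eq:lem4 eq1} by the same closure-of-edges argument that was used in Lemma \ref{lem3} to pass from \eqref{eq:lem4 cond1} to \eqref{eq:lem4 cond2}.

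For \eqref{eq:lem4 eq1} the difficulty is that Lemma \ref{lem3} only produces a pad at some direction $y = y(n) \in \partial B_n(\mz)$, not at the prescribed $ne_i$. I would fix $\beta_0 \in (\beta_c(J), \beta)$, set $\eta = (\beta-\beta_0)/K$ for an integer $K$ to be chosen, and build a chain of $m$-pads $B_m(\mz) = A_0, A_1, \ldots, A_K$ lying inside $B_{(1+\delta)n}(\mz)$ with $A_K \subset B_{\delta n}(ne_i)$. The pad $A_{j+1}$ is produced from $A_j$ by applying Lemma \ref{lem3} in the independent sprinkled environment $\omega_{\beta_0} \vee \omega_\eta^{(j+1)}$ at an appropriately chosen scale $n_{j+1}$; the lattice symmetries of $J$ (under which $\p_{\beta,J}$ is invariant) are then used to reflect or permute the generic direction supplied by Lemma \ref{lem3} into one that keeps the chain headed toward $ne_i$. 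Arranging $\sum_j n_j \leq n$ keeps the chain inside $B_{(1+\delta)n}(\mz)$, and applying Lemma \ref{lem3} with accuracy $\eps'/(2K)$ at each step, together with a union bound over the $K$ steps, produces \eqref{eq:lem4 eq1}.

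The main obstacle is that each step's direction can be steered only within the orbit of the finite group of $2^d d!$ lattice symmetries, so the per-step directional freedom is discrete; to land inside $B_{\delta n}(ne_i)$ rather than on some other part of $\partial B_n(\mz)$, the scales $n_j$ must be chosen to decrease geometrically so that the later (finer) steps can correct the directional error accumulated by the earlier (coarser) ones. This is essentially the Grimmett-Marstrand renormalization mechanism, now run in the long-range setting where the additional ingredient is the $J(x) = \mathcal{O}(\|x\|^{-2d})$ assumption that has already been absorbed into Lemmas \ref{lem2} and \ref{lem3}. The parity hypothesis ``$n$ even'' is convenient because it allows a symmetric split around the midpoint of the segment $[\mz, ne_i]$ when setting up the chain.

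For \eqref{eq:lem4 eq2}, the argument mirrors the passage from \eqref{eq:lem4 cond1} to \eqref{eq:lem4 cond2}. On the event $\{J(R,\tilde{W}) \leq K\}$, the conditional probability (given the rest of the configuration) that every edge from $R$ to $\tilde{W}$ is closed is at least $e^{-\beta K}$. Since $\tilde{W}$ contains $P_{m,n}^{\delta,ne_i}$ together with its open cluster in $B_{(1+\delta)n}(\mz) \setminus R$, every open path from $B_m(\mz) \subset R$ to $F_m^{\delta}(ne_i)$ inside $B_{(1+\delta)n}(\mz)$ must cross at least one such edge, so the closure of these edges rules out the connection in \eqref{eq:lem4 eq1}. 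Hence
\begin{equation*}
\p_\beta \left( J(R, \tilde{W}) \leq K \right) \, e^{-\beta K} \;\leq\; 1 - \p_\beta \left( B_m(\mz) \overset{B_{(1+\delta)n}(\mz)}{\longleftrightarrow} F_m^{\delta}(ne_i) \right),
\end{equation*}
and choosing the accuracy in \eqref{eq:lem4 eq1} to be at most $e^{-\beta K} \eps'$ (which only worsens $m$ and $N$) yields \eqref{eq:lem4 eq2}.
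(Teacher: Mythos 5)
Your treatment of \eqref{eq:lem4 eq2} matches the paper: on $\{J(R,\tilde{W})\leq K\}$ close all $R$--$\tilde{W}$ edges at cost $e^{-\beta K}$, which destroys the connection of \eqref{eq:lem4 eq1}, and request \eqref{eq:lem4 eq1} with accuracy $e^{-\beta K}\eps'$ in the first place. That part is fine.

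For \eqref{eq:lem4 eq1}, however, your route is genuinely different from the paper's and has a gap. The paper performs exactly \emph{two} applications of Lemma \ref{lem3}, both at the \emph{same} scale $k=n/2$, and interposes a \emph{single} reflection. After the first application one uses a lattice symmetry to arrange $y\in\partial B_k(\mz)$ with $y_1=k$. Then $\tilde{y}$ is defined by $\tilde{y}_1=y_1$ and $\tilde{y}_j=-y_j$ for $j\geq 2$, so that $y+\tilde{y}=2k\,e_1$ holds \emph{exactly}. The second application of Lemma \ref{lem3}, translated to the first pad and aimed at the reflected direction $\tilde{y}$, plus one sprinkling of size $\eta=(\beta-\beta_c)/2$, produces a pad contained in $B_{2\delta k}(2k\,e_1)=B_{\delta n}(n\,e_1)$. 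The off-axis displacements of the two steps cancel by the reflection; there is no error budget to manage, and $n=2k$ is what produces the parity restriction.

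Your proposal replaces this with a $K$-step chain at geometrically decreasing scales $n_j$, steering each step by a lattice symmetry. The claim that geometric decay of the $n_j$ corrects the accumulated off-axis error is not substantiated, and it does not obviously hold. Lemma \ref{lem3} supplies a direction $y_j\in\partial B_{n_j}(\mz)$ over which you have no control; after reorienting so that $(y_j)_1=n_j$, the remaining coordinates can be anything of magnitude up to $n_j$. If, for instance, every $y_j$ sits at a corner of $\partial B_{n_j}(\mz)$, the best achievable sign choices at each step leave, for $n_j = c\,r^j$, a residual off-axis error of order $c\,r/(1+r)$ while $\sum_j n_j = c\,r/(1-r)$ must equal roughly $n$; the ratio $(1-r)/(1+r)$ is not automatically below $\delta$ unless $r$ is taken close to $1$, which in turn blows up $K$ and the admissible $N$ in a way your setup never tracks. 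In short, the multi-scale chaining and the ``later finer steps fix earlier coarser error'' mechanism are both unnecessary and unverified here; the single reflection $y\mapsto\tilde{y}$ is the observation that makes the lemma work, and it is the one ingredient your outline is missing.
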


\begin{figure}
	\begin{center}
		\begin{tikzpicture}[x=0.023cm, y=0.023cm]
			
			\draw[gray] (-100,-100) rectangle (100,100);
			\draw[gray] (-130,-130) rectangle (130,130);
			
			\fill[lightblue] (-9,-9) rectangle (9,9);
			
			\draw[pattern=north west lines, pattern color=darkgreen, thick] (100,25) rectangle (130,85);
			\draw[ color=darkgreen, thick] (100,25) rectangle (130,85);

			\vertex[fill ,minimum size = 3 pt, label={[left] $y$},  ]  at (100,55) {};

			\fill[lightblue] (104,57) rectangle (104+18,57+18);

			\draw[->, black] (-135,0) -- (263,0) node[right] {$e_1$};
			\draw[->, color=black] (0,-135) -- (0,203) node[above] {$e_2$};

			\draw[black, thick] (7,4) to[out=60, in=170] (35,18);
			\draw[black, thick] (35,18) to[out=60, in=170] (44,20);
			\draw[black, thick] (44,20) to[out=60, in=230] (60,50);
			\draw[black, thick] (60,50) to[out=60, in=170] (80,70);
			\draw[black, thick] (80,70) to[out=60, in=120] (110,70);
			
			\vertex[fill ,minimum size = 2 pt, label={},  ]  at (7,4) {};
			\vertex[fill ,minimum size = 2 pt, label={},  ]  at (35,18) {};
			\vertex[fill ,minimum size = 2 pt, label={},  ]  at (44,20) {};
			\vertex[fill ,minimum size = 2 pt, label={},  ]  at (60,50) {};
			\vertex[fill ,minimum size = 2 pt, label={},  ]  at (80,70) {};
			\vertex[fill ,minimum size = 2 pt, label={},  ]  at (110,70) {};

			\draw[gray] (-100+113,-100+65) rectangle (100+113,100+65);
			\draw[gray] (-130+113,-130+65) rectangle (130+113,130+65);
			
			\draw[pattern=north west lines, pattern color=darkgreen, thick] (100+113,65-25) rectangle (130+113,65-85);
			\draw[ color=darkgreen, thick] (100+113,65-25) rectangle (130+113,65-85);
			\vertex[fill ,minimum size = 3 pt, label={[left] $z+\tilde{y}$},  ]  at (100+113,65-55) {};
			
			\fill[lightblue] (104+113+6,65-57+14) rectangle (104+18+113+6,65-57-18+14);
			
			\draw[black, thick]
			(115,60) to[out=70,in=160] (170, 90) to[out=100, in=0] (150, 120) to[out=60, in=110] (190, 100) to (200, 50) to[out=60, in=120] (225, 50) to[out=280,in=100] (230,16);
			
			\vertex[fill ,minimum size = 2 pt, label={},  ]  at (115,60) {};
			\vertex[fill ,minimum size = 2 pt, label={},  ]  at (170,90) {};
			\vertex[fill ,minimum size = 2 pt, label={},  ]  at (150,120) {};
			\vertex[fill ,minimum size = 2 pt, label={},  ]  at (190,100) {};
			\vertex[fill ,minimum size = 2 pt, label={},  ]  at (200,50) {};
			\vertex[fill ,minimum size = 2 pt, label={},  ]  at (225,50) {};
			\vertex[fill ,minimum size = 2 pt, label={},  ]  at (230,16) {};

			\draw[decorate, decoration={brace, amplitude=6pt}] (-155,-130) -- node[left=0.2cm] {\( \delta k \)} (-155,-100);
			\draw[decorate, decoration={brace, amplitude=6pt}] (-155,-100) -- node[left=0.2cm] {\( 2k \)} (-155,100);
			\draw[decorate, decoration={brace, amplitude=6pt}] (-155,100) -- node[left=0.2cm] {\( \delta k \)} (-155,130);
		\end{tikzpicture}
		
		\parbox{14cm}{ \caption{ Connecting $m$-pads (the blue boxes) in the proof of Lemma \ref{lem4}. We first find a path from $B_m(\mz)$ (the blue box on the left side) to $A = B_m(z)$ (the middle blue box). From $A$, we find a path to an $m$-pad $\tilde{A} \subset F_m^{\delta}(ne_1)$ (in the picture, $\tilde{A}$ is the blue box on the right side). Concatenating these two paths gives a path from $B_m(\mz)$ to $F_m^{\delta}(ne_1)$.}\label{fig:m pad at e_1}}
	\end{center}
\end{figure}

\begin{proof}
	By the symmetry of the kernel $J$, it suffices to prove the result for $e_i=e_1$. We define 
	\begin{equation*}
		\tilde{\beta}=\frac{\beta+\beta_c}{2},  \eta=\frac{\beta-\beta_c}{2}, \text{ and }	\eps = \frac{\eps^\prime}{3}
	\end{equation*}
	and take $K$ large enough so that $1-e^{-\eta K} > 1-\eps$.
	Let $m,N\in \N$ be such that for all $k\geq N$ there exists $y\in \partial B_k(\mz)$ such that for all sets $B_m(\mz) \subseteq R \subseteq B_k(\mz)$
	\begin{align}\label{eq:ychoose}
		\p_{\tilde{\beta}} \left( K_R(B_k(\mz)) \sim P_{m,k}^{\delta,y} \right) > 1-\eps \text{ and } \p_{\tilde{\beta}} \left( J (R,W_y^k) > K \right) > 1-\eps .
	\end{align}
	Such $m,N$, and $y=y(k)$ exist by the results of Lemma \ref{lem3}.
	In the following, we work with the two independent percolation configurations $\omega_{\tilde{\beta}}$ and $\omega_\eta^\prime$ as introduced in section \ref{sec:notation}. Note that their union $\omega_{\tilde{\beta}} \vee \omega_\eta^\prime$ is distributed like $\omega_{\beta}$ -- a percolation configuration sampled by the measure $\p_{\beta}$. Unless otherwise specified, all connection events of the form $x\sim v$ or $A \leftrightarrow B$ will always refer to connections in the environment $\omega_{\tilde{\beta}}$.

	By the symmetry of the kernel $J$ we can assume that the point $y= (y_1,\ldots,y_d)\in \partial B_k(\mz)$ chosen in \eqref{eq:ychoose} satisfies $y_1 = k$. So in particular we get that $B_m(\mz) \overset{B_{(1+\delta)k}(\mz)}{\longleftrightarrow} A$ for some open $m$-pad $A\subset S_k^{(1+\delta)k} \cap B_{\delta k}(y)$ with probability at least $1-\eps$. Assume that such a set $A$ exists. Let $z\in A$ be such that $A=B_m(z)$. Define the set
	\begin{equation*}
		\tilde{R} = K_A\left( B_{(1+\delta)k}(\mz) \right) = \left\{x \in B_{(1+\delta)k}(\mz) : x\overset{B_{(1+\delta)k}(\mz)}{\longleftrightarrow} A \text{ in } \omega_{\tilde{\beta}}\right\}
	\end{equation*}
	as the open set containing $A$ within $B_{(1+\delta)k}(\mz)$ in the environment $\omega_{\tilde{\beta}}$.
	Define $\tilde{y} = (\tilde{y}_1,\ldots,\tilde{y}_d) \in \Z^d$ by $\tilde{y}_1=y_1$ and $\tilde{y}_i=-y_i$ for $i\geq 2$. Furthermore, we define the sets
	\begin{multline*}
		P = \Big\{ x \in z+ S_k^{(1+\delta)k} : x \text{ is contained} \\ \text{in an $m$-pad that is a subset of $z + \left(B_{\delta k}(\tilde{y}) \cap S_k^{(1+\delta)k} \right)$} \Big\}
	\end{multline*}
	and
	\begin{align*}
		W = 
		P \cup \bigcup_{x \in B_k(z): x \sim P} K_x\left( B_k(z) \setminus \tilde{R} \right).
	\end{align*}
	The set $\tilde{R}$ can be constructed by only revealing the information whether edges with at least one end in $\tilde{R}$ are open. Contrary to that, the set $W$ only depends on edges with both ends outside of $\tilde{R}$. Also note that the set $W$ is defined as the set $W_{\tilde{y}}^k$ in Lemma \ref{lem3}, up to a translation.
	Using the translation invariance of the model and Lemma \ref{lem3}, we get that $J\left(\tilde{R},W\right) > K$ with probability at least $1-\eps$, under the measure $\p_{\tilde{\beta}}$. Conditioning on $\{J\left(\tilde{R},W\right) > K\}$, there exists an $\omega_{\eta}^\prime$-open edge between $\tilde{R}$ and $W$ with probability at least $1-\exp\left(-\eta K\right) > 1-\eps$. If there exists such an open edge between $\tilde{R}$ and $W$ in the environment $\omega_\eta^\prime$, then there exists an open path in the environment $\omega_{\tilde{\beta}} \vee \omega_\eta^\prime$ from $B_m(\mz)$ to $P$ and this open path is entirely in the set $B_{(1+\delta)k}(\mz) \cup B_{(1+\delta)k}(z) \subset B_{(2+2\delta)k}(\mz) $. The three relevant events for this $\Big($$\big\{B_m(\mz) \leftrightarrow A$ for some $m$-pad $A \subset P_{m,k}^{\delta,y} \big\}$, $\big\{J(\tilde{R},W) > K\big\}$, and $\big\{$there exists an open edge between $W$ and $\tilde{R}$ in the environment $\omega_{\eta}^\prime \big\}$$\Big)$ all have a conditional probability of at least $(1-\eps)$. Thus, all three of the events occur simultaneously with probability at least $(1-\eps)^3 \geq 1-3\eps$.\\

	\noindent
	Let $B_m(\tilde{z})=\tilde{A} \subset P$ be an open $m$-pad. We know that
	\begin{align*}
		A & = B_m(z) \subset S_k^{(1+\delta)k} \cap B_{\delta k}(y), \\
		\tilde{A} & = B_m(\tilde{z}) \subset z + \left(B_{\delta k}(\tilde{y}) \cap  S_k^{(1+\delta)k}\right) .
	\end{align*}
	As $y+\tilde{y}=2k e_1$ and $\|z-y\|_\infty \leq \delta k$, we thus get that
	\begin{equation*}
		\tilde{A} \subset z + B_{\delta k}(\tilde{y})
		= z-y +y  + B_{\delta k}(\tilde{y})
		= z-y   + B_{\delta k}(2 k e_1)
		\subset
		B_{2\delta k}(2 k e_1),
	\end{equation*}
	which also implies that $\tilde{A} \subset B_{(1+\delta)2 k} (\mz)$, and thus $\tilde{A} \subset F_m^{\delta} (2k e_1)$. See Figure \ref{fig:m pad at e_1} for the relative positions of $A,\tilde{A},y,\tilde{y}$, and $z$. So in total, we see that
	\begin{equation*}   
		\p_\beta \left( B_m(\mz) \overset{B_{(1+\delta)2k}(\mz)}{\longleftrightarrow}  F_{m}^{\delta}(2k e_1) \right) \geq (1-\eps)^3 \geq 1-3\eps = 1-\eps^\prime
	\end{equation*} 
	which finishes the proof of \eqref{eq:lem4 eq1} for $n=2k$. Given \eqref{eq:lem4 eq1}, the proof of \eqref{eq:lem4 eq2} works the same way as the proof of \eqref{eq:lem4 cond2} and we omit it.
\end{proof}


In the proof of Lemma \ref{lem4}, we first found an open $m$-pad $A\subset B_{\delta k}(y)$, and then we found an open $m$-pad $\tilde{A}\subset B_{2\delta k}(y+\tilde{y})$ such that $B_m(\mz) \leftrightarrow A \leftrightarrow \tilde{A}$. This construction of making connections by concatenating connections between $m$-pads will be extremely useful for the proof of Theorem \ref{theo:main}. In the same way as in the proof of Lemma \ref{lem4} one can prove the following result, using Lemma \ref{lem4}.

\begin{figure}
	\begin{center}
		\begin{tikzpicture}[scale=0.2]

			\draw[gray] (-10,-10) rectangle (40,10);
			
			\fill[lightblue] (-1,-1) rectangle (1,1);

			\draw[pattern=north west lines, pattern color=darkgreen, thick] (27,-3) rectangle (33,3);
			\draw[ color=darkgreen, thick] (27,-3) rectangle (33,3);
			
			\fill[lightblue] (30-1-1.3,-1+0.7) rectangle (30+1-1.3,1+0.7);
			
			\draw[->, lightgray] (-11,0) -- (43,0) node[right] {$e_1$};
			\draw[->, color=lightgray] (0,-11) -- (0,16.5) node[right] {$e_2$};

			\draw[decorate, decoration={brace, amplitude=6pt}] (-10,13) -- node[above=0.2cm] {\( n \)} (0,13);
			\draw[decorate, decoration={brace, amplitude=6pt}] (0,13) -- node[above=0.2cm] {\( 3n \)} (30,13);
			\draw[decorate, decoration={brace, amplitude=6pt}] (30,13) -- node[above=0.2cm] {\(  n \)} (40,13);
			
			\draw[decorate, decoration={brace, amplitude=6pt}] (30+3,-4) -- node[below=0.2cm] {\(  \delta n \)} (30-3,-4);

			\draw[black, thick] (0.7,0.3) to[out=10, in=130] (6,-3) to[out=80, in=250] (8,3) to[out=20, in=180] (12,4) to[out=0, in=120] (15,3) to[out=0, in=120] (20,-2) to[out=0, in=180] (22,-2) to[out=70, in=180] (26,3) to[out=0, in=150] (28,1);
			
			\vertex[fill ,minimum size = 2 pt, label={},  ]  at (0.7,0.3) {};
			\vertex[fill ,minimum size = 2 pt, label={},  ]  at (6,-3) {};
			\vertex[fill ,minimum size = 2 pt, label={},  ]  at (8,3) {};
			\vertex[fill ,minimum size = 2 pt, label={},  ]  at (12,4) {};
			\vertex[fill ,minimum size = 2 pt, label={},  ]  at (15,3) {};
			\vertex[fill ,minimum size = 2 pt, label={},  ]  at (20,-2) {};
			\vertex[fill ,minimum size = 2 pt, label={},  ]  at (22,-2) {};
			\vertex[fill ,minimum size = 2 pt, label={},  ]  at (26,3) {};
			\vertex[fill ,minimum size = 2 pt, label={},  ]  at (28,1) {};
			
		\end{tikzpicture}
		
		\parbox{14cm}{ \caption{ An illustration of the statement of Corollary \ref{coro:conn}: $B_m(\mz)$ (the left blue square) is connected by an open path (the black edges) to an open $m$-pad (right blue square) in the target area (green hatched). The path does not leave the big rectangle.}\label{fig:m pad steer 1}}
	\end{center}
\end{figure}

\begin{corollary}\label{coro:conn}
	Let $J:\Z^d \to \left[0,\infty\right)$ be a symmetric and irreducible kernel with $J(x)=\mathcal{O}(\|x\|^{-2d})$ and let $\beta > \beta_c(J)$. For all $\delta, \eps > 0$ there exist $m,N\in \N$ such that for all $n\geq N$ and $i\in \{1,\ldots,d\}$
	\begin{equation*}
		\p_\beta \left( B_m(\mz)  \overset{Z_n^i}{\longleftrightarrow} P_i  \right) > 1-\eps
	\end{equation*}
	where the sets $P^i, Z_n^i$ are defined by
	\begin{align*}
		P_i & = \left\{x : x \text{ is contained in an open $m$-pad $A\subset B_{\delta n} (3n e_i)$}\right\}\\
		Z_n^i & = \{-n,\ldots,n\}^{i-1} \times \{-n,\ldots,4n\} \times \{-n,\ldots,n\}^{d-i}.
	\end{align*}
\end{corollary}

See Figure \ref{fig:m pad steer 1} for a picture of the statement of Corollary \ref{coro:conn}.
In Corollary \ref{coro:conn} we consider boxes $(Z_n^i)$ that are no cubes. The reason why we do this is the following. Assume that we construct a path that starts at the open $m$-pad $B_m(u_1)$. From there, for $i,j\in \{1,\ldots,d\}$, we construct a path within $u_1 +Z_n^i$ to an open $m$-pad $B_m(u_2) \subset B_{\delta n}(u_1 + 3n e_i)$ and from there we construct an open path within $u_2+Z_n^j$ to an open $m$-pad $B_m(u_3) \subset B_{\delta n} (u_2 + 3n e_j)$, then the {\sl target regions} $B_{\delta n}(u_1 + 3n e_i), B_{\delta n}(u_2 + 3n e_j)$ are (at least for $\delta > 0$ sufficiently small) such that no information has been revealed about the edges in them so far.
Using this idea inductively implies the following result.

\begin{corollary}\label{coro:steer2}
	Let $J:\Z^d \to \left[0,\infty\right)$ be a symmetric and irreducible kernel with $J(x)=\mathcal{O}(\|x\|^{-2d})$ and let $\beta > \beta_c(J)$. For all $\eps > 0$ there exist $m,N \in \N$ such that for all $n\geq N$ the following result holds.
	For all $i\in \{1,\ldots,d\}$ and all sets $A=B_m(u) \subset B_n(\mz)$
	\begin{equation*}
		\p_\beta \left( A \overset{M_i}{\longleftrightarrow} P_i \right) > 1-\eps,
	\end{equation*}
	where the sets $M_i, P_i$ are defined by
	\begin{align*}
		& M_i = \left\{-3n,\ldots,3n\right\}^{i-1} \times \left\{-3n,\ldots,11n\right\} \times \left\{-3n,\ldots,3n\right\}^{d-i}  , \\
		& T_i =  \left\{-n,\ldots,n\right\}^{i-1} \times \left\{7n,\ldots,9n\right\} \times \left\{-n,\ldots,n\right\}^{d-i}, \\
		& P_i =  \left\{x \in T_i: x \text{ is contained in an open $m$-pad } B \subset T_i\right\}.
	\end{align*}
	See Figure \ref{fig:final steering} for the relative positions of these sets.
	Furthermore, let $\beta > \beta_c$ and $\eps, K > 0$. Then there exist $m,N \in \N$ such that for all $n\geq N$, all $i\in \{1,\ldots,d\}$, all sets $A=B_m(u) \subset B_n(\mz)$, and all sets $R$ with $A\subseteq R \subseteq M_i \setminus T_i$
	\begin{equation*}
		\p_\beta \left( J(R,W) > K \right) > 1-\eps
	\end{equation*}
	where the set $W$ is defined by
	\begin{equation*}
		W =  K_{P_i} \left( M_i \setminus R \right) = \left\{x \in M_i \setminus R : x \overset{M_i \setminus R}{\longleftrightarrow} P_i \right\} .
	\end{equation*}
\end{corollary}

\begin{figure}
	\begin{center}
		\begin{tikzpicture}[scale=0.08]

			\draw[gray] (-30,-30) rectangle (110,-10);
			\draw[gray] (-30,-10) rectangle (110,10);
			\draw[gray] (-30,10) rectangle (110,30);
			\draw[gray] (-30,-30) rectangle (-10,30);
			\draw[gray] (-10,-30) rectangle (10,30);
			\draw[gray] (10,-30) rectangle (30,30);
			\draw[gray] (30,-30) rectangle (50,30);
			\draw[gray] (50,-30) rectangle (70,30);
			\draw[gray] (70,-30) rectangle (90,30);
			\draw[gray] (90,-30) rectangle (110,30);
			\draw[black, very thick] (-30,-30) rectangle (110,30);

			\draw[draw=none, pattern=north west lines, pattern color=darkgreen, thick] (70,-10) rectangle (90,10);
			
			\draw[draw=none, pattern=north east lines, pattern color=orange, thick] (-10,-10) rectangle (10,10);

			\draw[decorate, decoration={brace, amplitude=6pt}] (-30,32) -- node[above=0.2cm] {\( 2n \)} (-10,32);
			\draw[decorate, decoration={brace, amplitude=6pt}] (-32,10) -- node[left=0.2cm] {\( 2n \)} (-32,30);

			\fill[lightblue] (-5,-8) rectangle (3,0);
			\fill[lightblue] (72,-2) rectangle (80,6);
			
			\draw[black, thick] (-1,-4) to[out=90, in=170] (8,8) to[out=30,in=150] (20,8) to[out =70, in= 170] (40, 20) to[out=320,in=90] (45,0) to[out=30,in=174] (55,3) to[out=10,in=170] (75,4);
			
			\vertex[fill ,minimum size = 3 pt, label={},  ]  at (-1,-4) {};
			\vertex[fill ,minimum size = 3 pt, label={},  ]  at (8,8) {};
			\vertex[fill ,minimum size = 3 pt, label={},  ]  at (20,8) {};
			\vertex[fill ,minimum size = 3 pt, label={},  ]  at (40,20) {};
			\vertex[fill ,minimum size = 3 pt, label={},  ]  at (45,0) {};
			\vertex[fill ,minimum size = 3 pt, label={},  ]  at (55,3) {};
			\vertex[fill ,minimum size = 3 pt, label={},  ]  at (75,4) {};

		\end{tikzpicture}
		
		\parbox{14cm}{ \caption{ An illustration of the statement of Corollary \ref{coro:steer2} in dimension $d=2$: For every set $B_m(u) \subset B_n(\mz)$ (the blue square on the left side as a subset of the orange hatched area) there exists, with probability at least $1-\eps$, a path (the black edges) to an open $m$-pad (the blue square on the right) in the target area $T_1$ (the green hatched area). This path does not use edges outside the big $(14n+1) \times (6n+1)$ rectangle, which is $M_1$.}\label{fig:final steering}}
	\end{center}
\end{figure}

A key idea in the proof of Theorem \ref{theo:main} is to show that a renormalized version of the truncated long-range percolation graph dominates a supercritical directed site-bond percolation model in dimension $d=2$. For this, we first define a model of directed percolation on the positive quadrant $\N_0 \times \N_0 \times \{0\}^{d-2} \eqqcolon V$. We write $V = \bigcup_{n=0}^{\infty} V_n$, where $V_n = \left\{v \in V : \|v\|_1 = n\right\}$. Vertices can be dead in this model, and directed edges of the form $(x,x+e_i)$ with $x\in V$ and $i\in \{1,2\}$ can be open or closed. We sequentially explore the open cluster containing the origin. For this, we define sets of {\sl active} vertices $A_n \subset V_n$ as follows. We start with $A_0 = V_0 = \left\{\mz\right\}$. Then, for given $A_{n-1} = \{x_1,\ldots,x_k\} \subset V_{n-1}$ we construct $A_{n} \subset V_{n}$ as follows:
\begin{itemize}
	\item[(1.)] For $i=1,\ldots,k$, define the edge $(x_i,x_i+e_1)$ to be open with conditional probability at least $q_{x_i,e_1}$ and closed with conditional probability at most $1-q_{x_i,e_1}$. If the edge $(x_i,x_i+e_1)$ is open, define $x_i+e_1$ to be active and add it to the set $A_n$. Otherwise, we say that the vertex $x_i+e_1$ is dead.
	\item[(2.)] For $i=1,\ldots,k$, if $x_i+e_2$ was declared either dead or active in step (1.), do nothing. \\
	Otherwise, define the edge $(x_i,x_i+e_2)$ to be open with conditional probability at least $q_{x_i,e_2}$ and closed with conditional probability at most $1-q_{x_i,e_2}$. If the edge $(x_i,x_i+e_2)$ is open, define $x_i+e_2$ to be active and add it to the set $A_n$.
\end{itemize}
Here, the numbers $q=\left(q_{x,e_i}\right)_{i\in \{1,2\}, x\in V}$ are real numbers in the interval $\left[0,1\right]$. The above algorithm should be read as follows. Whenever we make a choice whether we define an edge $(x,x+e_i)$ to be open, then the probability that this edge is open is, given everything that occurred so far, at least $q_{x,e_i}$.
Write $\p_{q}$ for the resulting probability measure. Using induction on $n=1,2,\ldots$ one sees that there exists a path of upward/right-directed edges from $\mz$ to all $x\in A_n$. Thus, if $|A_n| \geq 1$ for all $n\in \N$, then there exists an infinite upward/right-directed path starting at the origin. Using a Peierl's argument for $\N_0^2$, one can also see that this occurs with positive probability for $\sup_{x\in V, i\in \{1,2\}} |1-q_{x,e_i}|$ small enough. Say that $\rho \in (0,1)$ is such that
\begin{equation}\label{eq:nec cond}
	q_{x,e_i} \geq \rho \text{ for all $x\in V, i \in \{1,2\}$} \ \ \Rightarrow \ \ \p_q \left( |A_n| \geq 1 \text{ for all $n\in \N_0$}\right) > 0 .
\end{equation} 
With this, we are finally ready to go to the proof of Theorem \ref{theo:main}.

\begin{proof}[Proof of Theorem \ref{theo:main}]
	Let $\beta> \beta_c$, and let $\eta > 0$ and $\tilde{\beta} \in \left(\beta_c,\beta\right)$ be such that $\tilde{\beta}+2\eta = \beta$. Assume that  $\omega_{\tilde{\beta}}, \omega_{\eta}^{\prime}, \omega_{\eta}^{\prime \prime} \in \{0,1\}^E$ are three independent percolation configurations such that $\omega_{\eta}^{\prime}$ and $\omega_{\eta}^{\prime \prime}$ are distributed like $\omega_{\eta}$. Using the same calculation as in \eqref{eq:maxim}, we get that
	\begin{equation}\label{eq:maxvee}
		\omega \coloneqq
		\omega_{\tilde{\beta}} \vee \omega_{\eta}^{\prime} \vee \omega_{\eta}^{\prime \prime} \overset{d}{=} \omega_\beta,
	\end{equation}
	where $\omega_{\tilde{\beta}} \vee \omega_{\eta}^{\prime} \vee \omega_{\eta}^{\prime \prime} \in \{0,1\}^E$ is defined as the pointwise maximum of $\omega_{\tilde{\beta}}, \omega_{\eta}^{\prime}$, and $ \omega_{\eta}^{\prime \prime}$.
	
	Let $1-2\eps = \rho < 1$ be large enough such that \eqref{eq:nec cond} holds. Let $K > 0$ be large enough so that $e^{-\eta K} < \eps$ and let $m,n \in \N$ be large enough such that for all $i\in \{1,2\}$, all sets $A=B_m(u) \subset B_n(\mz)$, and all sets $R$ with $A\subseteq R \subseteq M_i \setminus T_i$
	\begin{equation}\label{eq:m pad condition beta tilde}
		\p_{\tilde{\beta}} \left( J(R,W) > K \right) > 1-\eps,
	\end{equation}
	where the sets $W, M_i$, and $T_i$ are defined (as in Corollary \ref{coro:steer2}) by
	\begin{align*}
		& M_i = \left\{-3n,\ldots,3n\right\}^{i-1} \times \left\{-3n,\ldots,11n\right\} \times \left\{-3n,\ldots,3n\right\}^{d-i}  , \\
		& T_i =  \left\{-n,\ldots,n\right\}^{i-1} \times \left\{7n,\ldots,9n\right\} \times \left\{-n,\ldots,n\right\}^{d-i}, \\
		& P_i =  \left\{x \in T_i: x \text{ is contained in an open $m$-pad } A \subset T_i\right\},\\
		& W =  K_{P_i} \left( M_i \setminus R \right) = \left\{x \in M_i \setminus R : x \overset{M_i \setminus R}{\longleftrightarrow} P_i \right\} .
	\end{align*}
	We now iteratively define sets of vertices $(A_k)_{k\in \N_0}$ which are subsets of $V\coloneqq \N_0 \times \N_0 \times \{0\}^{d-2}$	such that $A_k \subseteq \left\{u \in V : \|u\|_1 = k\right\}$.
	We define these sets depending on the percolation configurations $\omega_{\tilde{\beta}}, \omega_\eta^{\prime}$, and $\omega_\eta^{\prime\prime}$. A point $v\in V$ then corresponds to the box $B_n(8nv) \subset \Z^d$, and the connection of the points $v \in V$ and $u=v+e_i$ (with $i\in \{1,2\}$) depends on a connection event inside the set $8nv+M_i$. For $u\in V = \N_0 \times \N_0 \times \{0\}^{d-2}$ and $i\in\{1,2\}$, we write 
	\begin{equation*}
		M_i^u= 8n u + M_i \ \ \text{ and } \ \ T_i^u= 8n u + T_i.
	\end{equation*}
	
	Furthermore, for each vertex $u\in A_k$, there is also a set $R_1^u \subseteq B_{3n}(8nu)$ that is also associated with $u$. Let us now define the sets  $(A_k)_{k\in \N_0}$. 
	If $B_m(\mz)$ is an $\omega_{\tilde{\beta}}$-open $m$-pad, we define $A_0=\{\mz\} \subset V$ and $R_1^{\mz}=B_m(\mz)$. Otherwise we define $A_0=\emptyset$ and stop the exploration. For given $A_{k-1}=\{x_1,\ldots, x_l\} \subset V_{k-1}$, and the sets $\left(R_1^{x_i}\right)_{i\in \{1,\ldots,l\}}$, we define the set $A_k$ as follows:
	\begin{itemize}
		\item[(1.)] For $i=1,\ldots,l$: Let $u=x_i$. Define the sets
		\begin{align*}
			& R_{+}^{u} = \left\{ x \in M_1^u \setminus R_1^u : x \sim R_1^u \text{ in } \omega_{\eta}^\prime \right\} \ \ \text{ and}\\
			& X_1^u = K_{R_{+}^{u}}\left(M_1^u \setminus R_1^u; \omega_{\tilde{\beta}}\right) = \left\{x \in M_1^u \setminus R_1^u : x \overset{M_1^u \setminus R_1^u}{\longleftrightarrow} R_{+}^{u} \text{ in } \omega_{\tilde{\beta}} \right\} .
		\end{align*}
		We define the (directed) edge $(u,u+e_1)$ to be open if there exists an open $m$-pad $A \subset X_1^u\cap B_{n}(8n (u+e_1))$ in the environment $\omega_{\tilde{\beta}}$. Then we also define $u+e_1$ as active and add it to the set $A_k$. Further, we define $R_1^{u+e_1} = X_1^u\cap B_{3n}(8n (u+e_1))$.
		
		If we did not define $u+e_1$ as active so far, we say that $u+e_1$ is dead.
		
		\item[(2.)] For $i=1,\ldots,l$, if $x_i+e_2$ was declared either dead or active in step (1.), do nothing. \\
		Otherwise, let $u=x_i$ and define the sets
		\begin{align*}
			& R_2^u = \left(R_1^u  \cup X_1^u\right) \cap B_{3n}(8nu),\\
			& R_{++}^{u} = \left\{ x \in M_2^u \setminus R_2^u : x \sim R_2^u \text{ in } \omega_{\eta}^{\prime\prime} \right\}, \ \text{ and }\\
			& X_2^u = K_{R_{++}^{u}}\left(M_2^u \setminus R_2^u; \omega_{\tilde{\beta}} \right) 
			= 
			\left\{x \in M_2^u \setminus R_2^u : x \overset{M_2^u \setminus R_2^u}{\longleftrightarrow} R_{++}^{u} \text{ in } \omega_{\tilde{\beta}} \right\} .
		\end{align*}
		We define the (directed) edge $(u,u+e_2)$ to be open if there exists an open $m$-pad $A \subset X_2^u\cap B_{n}(8n (u+e_2))$ in the environment $\omega_{\tilde{\beta}}$. Then we also define $u+e_2$ as active and add it to the set $A_k$. Further, we define $R_1^{u+e_2} = X_2^u\cap B_{3n}(8n (u+e_2))$.
	\end{itemize}
	
	We will now show that if $|A_k|\geq 1$ for all $k\in \N_0$, then there exists an infinite open cluster in the percolation environment $\omega=\omega_{\tilde{\beta}} \vee \omega_{\eta}^{\prime} \vee \omega_{\eta}^{\prime\prime}$ where we erased all edges longer than $14n$ (in the $\infty$-norm). We do this by showing inductively that $\mz \overset{\leq 14 n}{\longleftrightarrow} x$ in the environment $\omega = \omega_{\tilde{\beta}} \vee \omega_\eta^{\prime} \vee \omega_\eta^{\prime\prime}$ for all $x \in R_i^u$, for all $u\in A_k$ and $i=1,2$. For $k=0$, this directly follows from the definition of $R_1^\mz$ and $R_2^\mz$.
	Assume that $v\in A_{k+1}$ is such that $v=u+e_1$ and the edge $(u,v)$ is open. By the induction assumption, there exist $\omega$-open paths between all $z \in R_1^u$ and $\mz$ and these paths only use edges $e$ with $|e|\leq 14 n$. All vertices in the set $R_{+}^{u}$ can be reached from $R_1^u$ using an $\omega_\eta^\prime$-open edge of length at most $14n$. The set $X_1^u$ can be reached from $R_{+}^{u}$ using only $\omega_{\tilde{\beta}}$-open edges of length at most $14n$. Since $R_1^v \subset X_1^u$, for each $x\in R_1^v$ there exists an $\omega$-open path from $R_1^u$ to $x$ and this path only uses edges of length at most $14n$. By the induction assumption, we also have $\mz \overset{\leq 14 n}{\longleftrightarrow} y$ in $\omega$ for each $y\in R_1^u$, and thus we get that $\mz \overset{\leq 14 n}{\longleftrightarrow} x$ in $\omega$. The case $i=2$ works analogous. Inductively, we get for all $u\in A_k$ that $x \in X_i^u$ is connected to $\mz$ by an $\omega$-open path that only uses edges of length at most $14 n$.
	So provided we can show that $\p \left( |A_k| \geq 1 \text{ for all $k\in \N_0$}\right) > 0$, this implies that the kernel $\tilde{J} : \Z^d \to \left[0,\infty\right)$ defined by
	\begin{equation*}
		\tilde{J}(x)= \begin{cases}
			J(x) & \text{ if } \|x\|_\infty \leq 14 n\\
			0 & \text{ else }
		\end{cases}
	\end{equation*}
	satisfies $\theta(\beta, \tilde{J}) > 0$ and thus $\beta\geq \beta_c(\tilde{J})$.\\	
	
	In the remainder of the proof, we show that $\p \left( |A_k| \geq 1 \text{ for all $k\in \N_0$}\right) > 0$. In the light of condition \eqref{eq:nec cond}, it suffices to show that the conditional probability of forming an open edge between an active vertex $u\in A_{k}$ and $u+e_i$ is at least $\rho$ for $i=1,2$.
	
	Let $k\in \N_0$ and let $u \in A_k$. Assume that we are in step (1.) of the above exploration, i.e., we explore the rectangle $M_1^u$ to create a connection between $m$-pads in $B_n(8nu)$ and $B_n(8n(u+e_1)) = T_1^u$. Condition on the event that $R_1^u = R$ for some set $R\subset B_{3n}(8nu)$. The event $R_1^u = R$ is independent of all random variables $\omega_\eta^\prime(\{a,b\})$ with $a,b \in M_1^u$ and all random variables $\omega_{\tilde{\beta}}(\{a,b\})$ with $a,b \in M_1^u \setminus R$. Define the set
	\begin{equation*}
		P_1^u = \left\{x\in T_1^u: x \text{ is contained in an $\omega_{\tilde{\beta}}$-open $m$-pad } A \subset T_1^u\right\}.
	\end{equation*}
	As the set $R=R_1^u$ contains an $\omega_{\tilde{\beta}}$-open $m$-pad $B \subset B_n(8nu)$, this implies, by \eqref{eq:m pad condition beta tilde}, that $\p \left(J(R,W)>K\right)>1-\eps$, where the set $W\subset M_1^u$ is defined by
	\begin{equation*}
		W= K_{P_1^u} \left(M_1^u\setminus R; \omega_{\tilde{\beta}}\right) = \left\{x\in M_1^u\setminus R : x \overset{M_1^u\setminus R}{\longleftrightarrow} P_1^u \text{ in } \omega_{\tilde{\beta}} \right\} .
	\end{equation*}
	Provided that $J(R,W)>K$, there exists an $\omega_\eta^{\prime}$-open edge between $R$ and $W$ with probability at least $1-e^{-\eta K} \geq 1-\eps$. If there exists such an $\omega_\eta^{\prime}$-open edge $\{a,b\}$ with $a\in R, b\in W$, then $b\in R_{+}^u$ and there exists an $\omega_{\tilde{\beta}}$-open path from $b$ to an $\omega_{\tilde{\beta}}$-open $m$-pad $A\subset T_1^u$, and thus we define the edge $(u,u+e_1)$ as open and $u+e_1$ as active. So in particular we see that, conditioned that $u$ is active and $R_1^u=R$, we get that $(u,u+e_1)$ is open with probability at least $(1-\eps)^2 \geq  \rho$. As this holds uniformly over all sets $R_1^u=R$ containing and $\omega_{\tilde{\beta}}$-open $m$-pad $B \subset B_n(8nu)$, this implies that if $u$ is active, the edge $(u,u+e_1)$ is open with probability at least $\rho$, and thus also $u+e_1$ is active with probability at least $\rho$.\\
	
	Next, assume that we are in step (2.) of the above exploration, i.e., we explore the rectangle $M_2^u$ to create a connection between $m$-pads in $B_n(8nu)$ and $B_n(8n(u+e_2)) = T_2^u$. As the vertex $u+e_2$ was not declared active or dead previously, there is no information revealed so far about edges with both endpoints in $M_2^u\setminus B_{3n}(8nu)$. Condition on the event that $R_2^u = R$ for some set $R\subset B_{3n}(8nu)$. The event $R_2^u = R$ is independent of all random variables $\omega_\eta^{\prime\prime}(\{a,b\})$ with $a,b \in M_2^u$ and all random variables $\omega_{\tilde{\beta}}(\{a,b\})$ with $a,b \in M_2^u \setminus R$. Define the set
	\begin{equation*}
		P_2^u = \left\{x\in T_2^u: x \text{ is contained in an $\omega_{\tilde{\beta}}$-open $m$-pad } A \subset T_2^u\right\}.
	\end{equation*}
	As the set $R=R_2^u$ contains an $\omega_{\tilde{\beta}}$-open $m$-pad $B \subset B_n(8nu)$, this implies, by \eqref{eq:m pad condition beta tilde}, that $\p \left(J(R,W)>K\right)>1-\eps$, where the set $W\subset M_1^u$ is defined by
	\begin{equation*}
		W= K_{P_2^u} \left(M_2^u\setminus R; \omega_{\tilde{\beta}}\right) = \left\{x\in M_2^u\setminus R : x \overset{M_2^u\setminus R}{\longleftrightarrow} P_2^u \text{ in } \omega_{\tilde{\beta}} \right\} .
	\end{equation*}
	Provided that $J(R,W)>K$, there exists an $\omega_\eta^{\prime\prime}$-open edge between $R$ and $W$ with probability at least $1-e^{-\eta K} \geq 1-\eps$. If there exists such an $\omega_\eta^{\prime\prime}$-open edge $\{a,b\}$ with $a\in R, b\in W$, then $b\in R_{++}^u$ and there exists an $\omega_{\tilde{\beta}}$-open path from $b$ to an $\omega_{\tilde{\beta}}$-open $m$-pad $A\subset T_2^u$, and thus we define the edge $(u,u+e_2)$ as open and $u+e_2$ as active. So in particular we see that, conditioned that $u$ is active and $R_2^u=R$ we get that $(u,u+e_2)$ is open with probability at least $(1-\eps)^2 \geq  \rho$. As this holds uniformly over all sets $R_2^u=R$ containing and $\omega_{\tilde{\beta}}$-open $m$-pad $B \subset B_n(8nu)$, this implies that if $u$ is active, the edge $(u,u+e_2)$ is open with probability at least $\rho$, and thus also $u+e_2$ is active with probability at least $\rho$.
\end{proof}

\section{A shape theorem for the long-range percolation metric}\label{sec:shape}

The next property of the supercritical long-range percolation cluster that we consider is the growth of chemical distances. The study of chemical distances and shape theorems is a classical question in percolation theory, and shape theorems for various settings were previously established \cite{kesten2003first,cerf2016weak,garet2004asymptotic}. 
Theorem \ref{theo:main} above says that the infinite supercritical long-range percolation cluster contains a supercritical finite range percolation cluster when $J\left(x\right) = \mathcal{O}\left(\|x\|^{-2d}\right)$. In a finite-range percolation cluster, the chemical distance between two points is comparable to the Euclidean distance due to the following result by Cerf and Théret \cite[Theorem 6]{cerf2016weak}, following earlier work of Antal and Pisztora \cite{antal1996chemical}. (They proved the result for nearest-neighbor percolation, but the same proof works for finite-range percolation.)

\begin{theorem}[Cerf-Théret\cite{cerf2016weak}] \label{theo:theret}
	Let $d\geq 2$, let $J:\Z^d \to \left[0,\infty\right)$ be a symmetric and irreducible kernel with finite range, and let $\beta > \beta_c(J)$. Then there exist positive constants $A_1,A_2,A_3$ such that
	\begin{equation}\label{eq:theret}
		\forall x,y \in \Z^d, \forall l \geq A_3 \|x-y\|, \ 
		\p_{\beta,J} \left(x \leftrightarrow y, D(x,y)\geq l\right) \leq A_1 e^{-A_2 l} .
	\end{equation}
\end{theorem}

Using this result and Theorem \ref{theo:main}, it is straightforward to show that the chemical distance between two {\sl typical} points $x,y \in \mathcal{C}_\infty$ in the  long-range percolation model is of the same order as the Euclidean distance. However, there might be points $x\in \Z^d$ that are several steps away from the infinite finite-range cluster.
To circumvent such problems, we introduce Propositions \ref{prop:stretched exp} and \ref{propo:zeta} below. These are the main technical innovations in this paper. The proof of Theorem \ref{theo:chemical dist} given these propositions follows from relatively standard techniques, like the subadditive ergodic theorem. The resilience of kernels (Theorem \ref{theo:main}) is also an essential input in the proof of Propositions \ref{prop:stretched exp} and \ref{propo:zeta}.

\begin{proposition}\label{prop:stretched exp}
	Let $d\geq 2$, let $J:\Z^d \rightarrow \left[0,\infty\right)$ be a symmetric and irreducible kernel satisfying $J(x)=\mathcal{O}(\|x\|^{-s})$ for some $s>2d$, and let $\beta > \beta_c(J)$. Then there exists $\kappa > 0$ and $C^\prime < \infty$ such that
	\begin{equation*}
		\p_{\beta} \Big( \exists x,y \in \mathcal{C}_\infty(\omega) \cap B_n(\mz): D \left( x , y \right) > C^\prime n \Big) \leq n^{-\kappa}
	\end{equation*}
	for all large enough $n\in \N$.
\end{proposition}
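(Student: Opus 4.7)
The plan is to transfer the linear-growth estimate of Theorem \ref{theo:theret} from supercritical finite-range percolation to the long-range setting by passing through a finite-range ``backbone''. Since $s>2d$ we have $J(x)=\mathcal{O}(\|x\|^{-2d})$, so Theorem \ref{theo:main} applies and we may fix $N\in\N$ so that the truncated kernel $\tilde{J}(x)=J(x)\mathbbm{1}_{\{\|x\|\leq N\}}$ still satisfies $\beta>\beta_c(\tilde{J})$. Via the Harris coupling we sample $\omega\sim\p_{\beta,J}$ and $\tilde{\omega}\sim\p_{\beta,\tilde{J}}$ jointly with $\tilde{\omega}\subseteq\omega$; let $\omega^\star=\omega\setminus\tilde{\omega}$ denote the long open edges. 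Uniqueness of the infinite cluster gives $\mathcal{C}_\infty(\tilde{\omega})\subseteq\mathcal{C}_\infty(\omega)$, and in particular the $\omega$-graph distance $D_\omega$ is dominated by the $\tilde{\omega}$-graph distance $D_{\tilde{\omega}}$ for any pair both lying in $\mathcal{C}_\infty(\tilde{\omega})$.

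Applying Theorem \ref{theo:theret} to $\tilde{J}$, together with a union bound over pairs in $B_{2n}(\mz)$ and the stretched-exponential bound \eqref{eq:roberto2}, yields that with probability at least $1-n^{-\kappa_0}$ (in fact any polynomial rate), any two vertices of $\mathcal{C}_\infty(\tilde{\omega})\cap B_{2n}(\mz)$ are at $\tilde{\omega}$-graph distance at most $C_0 n$, hence also at $\omega$-graph distance at most $C_0 n$.

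The central step is to show that, with probability at least $1-n^{-\kappa}$, every $x\in\mathcal{C}_\infty(\omega)\cap B_n(\mz)$ lies within $\omega$-graph distance $(\log n)^C$ of $\mathcal{C}_\infty(\tilde{\omega})$. I would establish this by an iterative ``excursion'' argument. If $x\notin\mathcal{C}_\infty(\tilde{\omega})$, then the finite $\tilde{\omega}$-cluster $K_x(\tilde{\omega})$ has diameter at most $C_1\log n$ with probability at least $1-n^{-\kappa'}$, by the standard exponential tail on the diameter of finite clusters in supercritical finite-range percolation (combined with a union bound over $B_n(\mz)$). Since $x\in\mathcal{C}_\infty(\omega)$, there exists at least one $\omega^\star$-edge from $K_x(\tilde{\omega})$ to a vertex $x_1\notin K_x(\tilde{\omega})$; conditional on the information revealed so far, the event $x_1\in\mathcal{C}_\infty(\tilde{\omega})$ has probability bounded below by some $\theta_0>0$ independent of $n$ (essentially $\theta(\beta,\tilde{J})$, up to a controlled correction from the boundary of $K_x(\tilde{\omega})$). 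Iterating, after $L=C_2\log n$ excursions the probability of not yet having landed in $\mathcal{C}_\infty(\tilde{\omega})$ is at most $(1-\theta_0)^L\leq n^{-d-\kappa}$ for $C_2$ sufficiently large. Each excursion contributes at most $C_1\log n+1$ steps to $D_\omega$, giving a polylogarithmic total, and a union bound over $x\in B_n(\mz)$ finishes the claim.

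Combining the two ingredients, for any $x,y\in\mathcal{C}_\infty(\omega)\cap B_n(\mz)$ we pick $\tilde{x},\tilde{y}\in\mathcal{C}_\infty(\tilde{\omega})\cap B_{2n}(\mz)$ with $D_\omega(x,\tilde{x}),D_\omega(y,\tilde{y})\leq(\log n)^C$, and conclude $D(x,y)\leq 2(\log n)^C+C_0 n\leq C' n$. The main obstacle is to make the iterative excursion argument rigorous: one must peel the configuration back in stages (first the finite cluster $K_x(\tilde{\omega})$, then a ``fresh'' $\omega^\star$-edge leaving it, and only then the $\tilde{\omega}$-status of the new endpoint) and carefully quantify the correction imposed on $\p(x_1\in\mathcal{C}_\infty(\tilde{\omega})\mid\text{revealed})$ by the finitely many forced-closed $\tilde{\omega}$-edges between $x_1$ and $K_x(\tilde{\omega})$. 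The hypothesis $s>2d$ is used both to invoke Theorem \ref{theo:main} for the truncation and, more implicitly, to keep the long-edge density small enough that this excursion chain does not proliferate and terminates after polylogarithmically many hops.
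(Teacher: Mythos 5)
Your high-level strategy is the same as the paper's: truncate to a finite-range kernel $\tilde{J}$ via Theorem \ref{theo:main}, Harris-couple $\omega_{\leq N}\subseteq\omega$, control distances within $\mathcal{C}_\infty(\omega_{\leq N})\cap B_{2n}$ using Theorem \ref{theo:theret} plus \eqref{eq:roberto2}, and reduce to showing that every $x\in\mathcal{C}_\infty(\omega)\cap B_n$ is at small $\omega$-graph distance from $\mathcal{C}_\infty(\omega_{\leq N})\cap B_{2n}$. Where you diverge is in that last (and central) step, and that is also where you have a gap.

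The paper handles this step in one shot, with no iterative conditioning. It introduces the event $L_k(x)$ (Claim \ref{claim:Lkdefin}): if the percolation ball $B_k(x,\omega)$ has depth $k$, then it contains at least $k/2$ vertices and therefore intersects many disjoint boxes $V_u^K$; since each such box gives an independent $\rho$-chance of containing a $\omega_{\leq N}$-connected set of size $\geq k^{1/(4d)}$ touching the ball, $B_k(x,\omega)$ almost surely contains a large finite-range connected set $Z$. If moreover $D(x,\mathcal{C}_\infty(\omega_{\leq N})\cap B_{2n})>n$ and there are no long edges near $B_{2n}$, then $Z$ cannot be part of the infinite $\omega_{\leq N}$-cluster, so it sits inside a large \emph{finite} $\omega_{\leq N}$-cluster, which is ruled out by Kesten--Zhang \eqref{eq:kesten zhang}. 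All the probabilistic inputs (independence across blocks in Claim \ref{claim:Lkdefin}, the long-edge bound using $s>2d$, Kesten--Zhang) are applied under the unconditioned measure; there is no sequential re-exploration.

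Your iterative excursion argument replaces this with a chain of conditionings, and this is precisely where I see a genuine difficulty beyond what you concede. At step $i$ you condition on everything revealed so far --- a union $R_{i-1}$ of finite $\omega_{\leq N}$-clusters whose outer short edges are all forced closed, together with the negative $\omega^\star$-information from choosing the ``first'' open long edge --- and you want $\p\bigl(x_i\in\mathcal{C}_\infty(\omega_{\leq N})\mid \mathcal{F}_{i-1}\bigr)\ge\theta_0>0$ uniformly. You frame the obstruction as ``finitely many forced-closed edges between $x_1$ and $K_x$,'' but the actual obstruction is that $R_{i-1}$ can be revealed to be a set that partially (or, in principle, completely) encloses $x_i$; conditional on the closedness of all short edges leaving $R_{i-1}$, the probability that $x_i$ reaches infinity in $\omega_{\leq N}$ while avoiding $R_{i-1}$ is \emph{not} uniformly bounded below over all admissible $R_{i-1}$ and $x_i$. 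You would need an additional argument ensuring the exploration never cages the next target (e.g., by steering the long edge to a vertex far from $R_{i-1}$ and controlling the diameter of $R_{i-1}$, not just its cardinality), and that argument is absent. The paper's construction via $L_k(x)$ sidesteps this entirely, because the relevant events are expressed in terms of fixed deterministic blocks $V_u^K$ and the independence is built in from the start rather than recovered step by step. Two smaller remarks: you aim for a polylogarithmic bound on $D(x,\mathcal{C}_\infty(\omega_{\leq N}))$, which is stronger than needed (the paper only needs $O(n)$) and makes the conditioning harder; and your diameter bound on finite $\omega_{\leq N}$-clusters needs to be sourced to a statement about the radius/diameter tail rather than the Kesten--Zhang volume tail, since those have different exponents.
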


\begin{proposition}\label{propo:zeta}
	Let $d\geq 2$, let $J:\Z^d \rightarrow \left[0,\infty\right)$ be a symmetric and irreducible kernel satisfying $J(x)=\mathcal{O}(\|x\|^{-s})$ for some $s>2d$, and let $\beta>\beta_c(J)$. Let $\zeta = \frac{1}{8d}$. Then
	\begin{equation}\label{1.1}
		\p_{\beta} \left( \exists x,y \in B_{n^\zeta}(\mz) : n < D(x,y) < \infty  \right) \leq n^{-1.1}
	\end{equation}
	for all large enough $n$.
\end{proposition}
\noindent
We postpone the proofs of these results to section \ref{sec:proof of propos}. First, we discuss how these two propositions imply Theorem \ref{theo:chemical dist}.

\subsection{Proof of Theorem \ref{theo:chemical dist}}

The proof of Theorem \ref{theo:chemical dist} given the two propositions follows from classical techniques, as demonstrated in \cite{cerf2016weak, garet2004asymptotic}. The same moment properties that follow from the exponential decay results (Theorem \ref{theo:theret}) in the finite-range setting do not hold for long-range percolation, but Proposition \ref{propo:zeta} still provides strong enough estimates on the moments.

Before going to the proof, we also introduce the following result, which states that it is exponentially unlikely for a box $B_n(\mz)$ to not intersect the infinite cluster. We will use that for all symmetric finite-range kernels $J$ and all $\beta > \beta_c(J)$ there exist $C<\infty$ and $\eta > 0$ such that 
\begin{equation}\label{eq:roberto}
	\p_{\beta,J} \left( B_n(\mz) \cap \mathcal{C}_\infty = \emptyset \right) \leq C e^{-n^\eta}
\end{equation}
for all $n\in \N$.
This follows from the results of Durrett and Schonmann in dimension $d=2$ \cite{durrett1988large} and from the results of Chayes, Chayes, and Newman in dimensions $d\geq 3$ \cite{chayes1987bernoulli}.
Note that \eqref{eq:roberto} directly implies the following result: If $J$ is a symmetric and resilient kernel, and $\beta > \beta_c(J)$, then there exists $N$ such that the kernel $\tilde{J}$ defined by $\tilde{J}(x) = J(x) \mathbbm{1}_{\|x\|\leq N}$ satisfies $\beta> \beta_c(\tilde{J})$. So in particular inequality \eqref{eq:roberto} holds for the kernel $\tilde{J}$. Thus we get that
\begin{equation}\label{eq:roberto2}
	\p_{\beta,J} \left( B_n(\mz) \cap \mathcal{C}_\infty = \emptyset \right) \leq \p_{\beta,\tilde{J}} \left( B_n(\mz) \cap \mathcal{C}_\infty = \emptyset \right) \leq C e^{-n^\eta}
\end{equation}
for some $C<\infty, \eta > 0$, and all $n\in \N$. So in particular, it is (stretched-exponentially) unlikely that the infinite cluster $\cC_\infty$ sampled from $\p_{\beta,J}$ does not intersect $B_n(\mz)$.

\begin{proof}[Proof of \eqref{eq:chemdist limsup} given Propositions \ref{prop:stretched exp} and \ref{propo:zeta}]
	Let $\zeta = \frac{1}{8d}$. We start by showing that
	\begin{equation}\label{eq:L1 of chem dist}
		\E_\beta \left[ \hat{D}(\mz, x) \right] < \infty
	\end{equation}
	for all $x \in \Z^d$. If $\hat{D}(\mz, x)>n$, then either there are $u,v \in B_{n^\zeta}(\mz) \cap \cC_\infty$ with $D(u,v) > n$, or (at least) one of $\hat{\mz}, \hat{x}$ is not in $B_{n^\zeta}(\mz)$; Remember that for $x\in \Z^d$ we write $\hat{x}$ for $\arg\min_{y \in \cC_\infty} \|x-y\|$, with lexicographic tie-breaking. Thus we get that
	\begin{align*}
		& \p_\beta \left( \hat{D}(\mz, x) >n \right) 
		\\
		& 
		\leq 
		\p_{\beta} \left( \exists u,v \in B_{n^\zeta}(\mz) \cap \mathcal{C}_\infty : D(u,v) > n \right) +  \p_\beta \left( \hat{\mz} \notin B_{n^\zeta}(\mz) \right)
		+
		\p_\beta \left( \hat{x} \notin B_{n^\zeta}(x) \right) 
	\end{align*}
	The first term in this sum can be bounded from above using Proposition \ref{propo:zeta}. For the second and third term, observe that if $n$ is large enough, then the two implications 
	\begin{align*}
		\left\{\hat{\mz} \notin B_{n^\zeta}(\mz)\right\} \Rightarrow \left\{\mathcal{C}_\infty \cap B_{n^{\zeta/2}}(\mz) = \emptyset\right\} \text{ and } \left\{\hat{x} \notin B_{n^\zeta}(\mz)\right\} \Rightarrow \left\{\mathcal{C}_\infty \cap B_{n^{\zeta/2}}(x) = \emptyset\right\}
	\end{align*}
	hold. So in particular we see that
	\begin{align}\label{roberto impli}
		\notag \p_\beta \left( \hat{\mz} \notin B_{n^\zeta}(\mz) \right)
		+
		\p_\beta \left( \hat{x} \notin B_{n^\zeta}(x) \right) &
		\leq
		\p_\beta \left( \mathcal{C}_\infty \cap B_{n^{\zeta/2}}(\mz) = \emptyset \right)
		+
		\p_\beta \left( \mathcal{C}_\infty \cap B_{n^{\zeta/2}}(x) = \emptyset \right)
		\\
		&
		\leq
		2 C\exp \left(-n^{\frac{\zeta \eta}{2}}\right)
	\end{align}
	for some $\eta>0$, by \eqref{eq:roberto2}. Thus we get that for $n$ large enough
	\begin{align*}
		& \p_\beta \left( \hat{D}(\mz, x) >n \right) 
		\\
		& 
		\leq 
		\p_{\beta} \left( \exists u,v \in B_{n^\zeta}(\mz) \cap \mathcal{C}_\infty : D(u,v) > n \right) +  \p_\beta \left( \hat{\mz} \notin B_{n^\zeta}(\mz) \right)
		+
		\p_\beta \left( \hat{x} \notin B_{n^\zeta}(x) \right) \\
		&
		\overset{\eqref{roberto impli}}{\leq}
		\p_{\beta} \left( \exists u,v \in B_{n^\zeta}(\mz) \cap \mathcal{C}_\infty : D(u,v) > n \right) + 2 C\exp \left(-n^{\frac{\zeta \eta}{2}}\right)
		\overset{\eqref{1.1}}{\leq} 
		n^{-1.1} + 2 C\exp \left(-n^{\frac{\zeta \eta}{2}}\right)
	\end{align*}
	for some constants $C< \infty$ and $\eta>0$. Summing this over all $n\in \N_0$ shows that the expectation in \eqref{eq:L1 of chem dist} is indeed finite. For $z \in \R^d$ and $z_d \in \Z^d$ with $z \in z_d + \left[-\tfrac{1}{2}, \tfrac{1}{2}\right)^d$ one has $\hat{D}(\mz,z) = \hat{D}(\mz,z_d)$ and thus we also get that $\E_{\beta}\left[\hat{D}(\mz,z)\right] < \infty$ for all $z \in \R^d$.
	With this, we can construct the norm $\mu$. We do this very briefly and follow the same arguments as presented by Cerf and Theret \cite{cerf2016weak}. $\hat{D}$ is a pseudometric on $\Z^d$, and thus $\hat{D}(x,z) \leq \hat{D}(x,y) + \hat{D}(y,z)$ for all $x,y,z\in \Z^d$. As the percolation process is stationary and ergodic, we can use Kingman's subadditive ergodic theorem \cite{kingman1973subadditive}. For $x\in \Z^d$, we define
	\begin{equation*}
		\mu(x) \coloneqq \lim_{n \to \infty} \frac{\hat{D}(\mz,nx)}{n}
	\end{equation*}
	where the limit exists almost surely and in $L_1$ under the measure $\p_{\beta}$. Similarly, for $x\in \mathbb{Q}^d$, let $N \in \N_{>0}$ be such that $Nx \in \Z^d$. Then we define
	\begin{equation*}
		\mu(x) \coloneqq \lim_{n \to \infty} \frac{\hat{D}(\mz,nNx)}{nN}.
	\end{equation*}
	The function $\mu$ is a semi-norm on $\mathbb{Q}^d$ and the limit above does not depend on the choice of $N$. As $\mu(x) \leq \|x\|_\infty (\mu(e_1)+\ldots+\mu(e_d))$, the function $\mu$ is Lipschitz-continuous on $\mathbb{Q}^d$ and thus we can extend it to $\R^d$. It directly follows from the asymptotic lower-bound on the chemical distance \eqref{eq:noam} that $\mu(x)>0$ for all $x\in \R^d \setminus\{\mz\}$, so $\mu$ is a norm on $\R^d$.\\	
\end{proof}

Next, we show the following result for long-range percolation, which was, for finite-range percolation previously proven in \cite[Lemma 1]{cerf2016weak}. The proof for long-range percolation uses Propositions \ref{prop:stretched exp} and \ref{propo:zeta}. Because of the polynomial decay in the statements of the propositions (compared to stretched exponential decay for analogous statements for finite-range percolation), we need slightly different tools compared to the proof for finite-range percolation.

\begin{lemma}\label{lem:theret2}
	Let $J$ be a kernel satisfying $J(x)=\mathcal{O}(\|x\|^{-s})$ for some $s>2d$, and let $\beta>\beta_c(J)$. Then there exists a constant $C$ such that for any $\eps > 0$ there exists almost surely $R> 0$ such that for all $u,v\in \Z^d$
	\begin{equation*}
		\left.\begin{array}{r}
			\|u\|_\infty \geq R \\
			\|u-v\|_\infty \leq \eps\|u\|_\infty
		\end{array}\right\} \Longrightarrow \hat{D}(u,v) \leq C \varepsilon\|u\|_\infty .
	\end{equation*}
\end{lemma}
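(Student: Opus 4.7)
The plan is to combine Proposition \ref{prop:stretched exp} with the stretched-exponential bound \eqref{eq:roberto2} via a dyadic Borel--Cantelli argument. I will decompose $\Z^d$ into annuli $A_k = \{u \in \Z^d : 2^{k-1} \leq \|u\|_\infty < 2^k\}$, and on each scale $2^k$ cover the ball $B_{2^{k+1}}(\mz)$ by $N_k = \mathcal{O}(\varepsilon^{-d})$ translates $B_{R_k}(z_i)$ of a box of radius $R_k := 10\varepsilon 2^k$, chosen with enough overlap that every ball of radius $R_k/3$ centered in $B_{2^{k+1}}(\mz)$ lies inside at least one of them. The target constant $C$ will be a small multiple of the constant $C'$ from Proposition \ref{prop:stretched exp}; in particular, $C$ will not depend on $\varepsilon$.

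Write $\kappa$ for the exponent in Proposition \ref{prop:stretched exp} and $\eta$ for the one in \eqref{eq:roberto2}. First I would choose an auxiliary sequence $r_k \to \infty$ growing fast enough to make $2^{kd}\exp(-r_k^\eta)$ summable (e.g.\ $r_k = \lceil (kd)^{2/\eta}\rceil$) while still satisfying $r_k = o(2^k)$. Then I introduce the two bad events
\begin{equation*}
\mathcal{B}_k^{(1)} = \bigcup_{u \in A_k} \{B_{r_k}(u) \cap \cC_\infty = \emptyset\},
\qquad
\mathcal{B}_k^{(2)} = \bigcup_{i=1}^{N_k}\{\exists\, x,y \in \cC_\infty \cap B_{R_k}(z_i) : D(x,y) > C' R_k\}.
\end{equation*}
Translation invariance and a union bound turn \eqref{eq:roberto2} into $\p_\beta(\mathcal{B}_k^{(1)}) \lesssim 2^{kd}\exp(-r_k^\eta)$, and Proposition \ref{prop:stretched exp} applied box-by-box gives $\p_\beta(\mathcal{B}_k^{(2)}) \lesssim \varepsilon^{-d-\kappa} 2^{-k\kappa}$; both are summable in $k$, so Borel--Cantelli yields an almost surely finite $K(\varepsilon)$ past which neither bad event occurs. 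Taking $R = 2^K$, enlarged if needed so that $r_k \leq R_k/10$ for all $k \geq K$, one concludes as follows: for $u \in A_k$ with $k \geq K$, and $v$ with $\|u-v\|_\infty \leq \varepsilon\|u\|_\infty \leq R_k/10$, both $\hat{u}$ and $\hat{v}$ lie within $r_k + R_k/10 \leq R_k/3$ of $u$, hence in a common covering box $B_{R_k}(z_i)$; on $(\mathcal{B}_k^{(2)})^c$ this forces $\hat{D}(u,v) = D(\hat{u},\hat{v}) \leq C' R_k \leq 20 C' \varepsilon\|u\|_\infty$.

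The hard part will be accommodating the weaker polynomial control $n^{-\kappa}$ of Proposition \ref{prop:stretched exp}, in contrast with the stretched-exponential tail Cerf--Th\'eret use in the finite-range analogue. What saves the Borel--Cantelli step is that the number of covering boxes $N_k = \mathcal{O}(\varepsilon^{-d})$ is independent of the scale $k$ while $R_k = 10\varepsilon 2^k$ grows geometrically, so the $n^{-\kappa}$ tail applied at $n = R_k$ contributes the summable factor $2^{-k\kappa}$ even when $\kappa$ is small. A secondary subtlety is the calibration of $r_k$: it must grow fast enough to beat the $2^{kd}$ union bound over the annulus, yet remain $o(\varepsilon 2^k)$, which is automatic since $r_k$ only needs to be polynomial in $k$.
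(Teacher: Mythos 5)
Your proposal takes essentially the same route as the paper: dyadic scales $2^k$, a cover of the $k$-th annulus by $\mathcal{O}(\varepsilon^{-d})$ boxes of side $\mathcal{O}(\varepsilon 2^k)$, Proposition~\ref{prop:stretched exp} applied box-by-box together with \eqref{eq:roberto2} to ensure $\hat{u},\hat{v}$ lie near $u,v$, and Borel--Cantelli made to work because the number of covering boxes per scale is bounded while the polynomial tail $n^{-\kappa}$ is evaluated at $n \sim 2^k$, producing a geometric series. The only cosmetic differences from the paper's argument are that you separate the two bad events while the paper bundles them into a single event $\cE_k$, and that you use a polynomially growing auxiliary radius $r_k$ where the paper uses $\sqrt{2^k}$; both choices make the $2^{kd}\exp(-r_k^\eta)$ term summable, so this is immaterial. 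One tiny calibration point: $\hat u$ is defined via the Euclidean nearest point in $\cC_\infty$, so if $B_{r_k}(u)\cap\cC_\infty\neq\emptyset$ in the $\infty$-norm you only get $\|u-\hat u\|_\infty \leq \sqrt{d}\, r_k$, not $r_k$; adjust the constant $10$ in $r_k \leq R_k/10$ to absorb this.
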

\begin{proof}[Proof of Lemma \ref{lem:theret2} given Propositions \ref{prop:stretched exp} and \ref{propo:zeta}]
	Let $\eps > 0$. For each $k\in \N$, let $\cT_k \subset \Z^d$ be a deterministic set so that
	\begin{equation*}
		\cT_k \subset B_{2^{k}}(\mz)  \ \text{ , } \ \bigcup_{x \in \cT_k} B_{\eps 2^k}(x) \supseteq  B_{2^{k}}(\mz) , \  \text{ and } \ T \coloneqq \sup_k |\cT_k| < \infty ,
	\end{equation*}
	which is possible for any fixed $\eps > 0$.
	Indeed, the set $\cT_k$ needs to have a cardinality of order at least $\eps^{-d}$. Define the event $\cE_k$ by 
	\begin{align*}
		\cE_k =  
		\bigcap_{x\in \cT_k} \left\{  D \left( u , v \right) \leq C^\prime 4 \eps 2^k \ \forall u,v \in \mathcal{C}_\infty(\omega) \cap B_{4\eps 2^{k}}(x) \right\} \cap \bigcap_{x\in B_{2^k}(\mz)} \left\{B_{\sqrt{2^k}}(x) \cap \cC_\infty \neq \emptyset \right\}
	\end{align*}
	where $C^\prime$ is the constant from Proposition \ref{prop:stretched exp}. Using the result of Proposition \ref{prop:stretched exp} and inequality \eqref{eq:roberto2}, it directly follows from a union bound that for $k$ large enough
	\begin{align*}
		\p_\beta \left(\cE_k^c\right)&
		\leq
		\sum_{x\in \cT_k} \p_\beta \left( D \left( u , v \right) > C^\prime 4 \eps 2^k \ \text{ for some } u,v \in \mathcal{C}_\infty(\omega) \cap B_{4\eps 2^{k}}(x) \right)
		\\
		&
		\hspace{7cm}
		+
		\sum_{x\in B_{2^k}(\mz)} \p_\beta \left(B_{\sqrt{2^k}}(x) \cap \cC_\infty = \emptyset \right) 
		\\
		&
		 \leq T (4 \eps 2^k)^{-\kappa} + (2^k+1)^d C \exp\left(-2^\frac{k\eta}{2}\right)  \leq T^\prime 2^{-k\kappa}
	\end{align*}
	for some constant $T^\prime < \infty$. So in particular we get that $\sum_{k=1}^{\infty} \p_\beta \left( \cE_k^c \right) < \infty$ and thus the event $\cE_k^c$ only occurs for finitely many $k$ almost surely. Say that the event $\cE_k$ holds for all $k\geq K$. Now let $u,v \in \Z^d$ with $\|u\|_\infty > 2^K$ and $\|u-v\|_\infty  \leq \eps \|u\|_\infty$. Say that $\|u\|_\infty \in \left(2^{k-1}, 2^{k}\right]$. Then there exists $x\in \cT_k$ with $u \in B_{\eps 2^k}(x)$. Further, as $\|u-v\|_\infty \leq \eps \|u\|_\infty \leq \eps 2^k$, and $\|u-\hat{u}\|_\infty \leq \sqrt{2^k}$, $\|v-\hat{v}\|_\infty \leq \sqrt{2^k}$, we get that
	\begin{align*}
		&\|\hat{u}-x\|_\infty \leq \|\hat{u}-u\|_\infty+\|u-x\|_\infty  \leq \sqrt{2^k} + \eps 2^k \text{ and } \\ & \|\hat{v}-x\|_\infty \leq \|\hat{v}-v\|_\infty +  \|v-u\|_\infty +  \|u-x\|_\infty \leq \sqrt{2^k} + \eps \|u\|_\infty + \eps 2^k ,
	\end{align*}
	which implies that $\hat{u},\hat{v} \in  B_{4\eps 2^k}(x)$ for $k$ large enough such that $\eps 2^k > \sqrt{2^k}$. Since the event $\cE_k$ holds for our choice of $k$ and because $\hat{u},\hat{v} \in B_{4\eps 2^k}(x)$ for some $x\in \cT_k$, we get by the definition of the event $\cE_k$ that
	\begin{equation*}
		\hat{D}(u,v) = D(\hat{u},\hat{v}) \leq C^\prime 4 \eps 2^k \leq C^\prime 8 \eps \|u\|_\infty.
	\end{equation*}
	This finishes the proof, as $u,v \in \Z^d$ with $\|u\|_\infty > 2^K$ and $\|u-v\|_\infty  \leq \eps \|u\|_\infty$ were arbitrary.
\end{proof}

Given Lemma \ref{lem:theret2}, the proof of the remaining items of Theorem \ref{theo:chemical dist} does not make any use of the long-range percolation properties of the graphs and is completely analogous to finite-range percolation. It was carried out in full detail in the proofs of \cite[Lemma 2]{cerf2016weak} and \cite[Theorem 3]{cerf2016weak}, respectively, and we do not repeat their proofs here.

\subsection{Proofs of the Propositions}\label{sec:proof of propos}

In this section, we prove Propositions \ref{prop:stretched exp} and \ref{propo:zeta}.
Before going to the proofs, we first need to introduce and prove several intermediate claims.

\begin{notation}\label{notation}
	For $x,y\in \Z^d$ and $\omega\in \{0,1\}^E$, we write $D(x,y;\omega)$ for the graph distance between $u$ and $v$ in the environment $\omega$.
	For $x\in \Z^d$, we write $B_k(x,\omega) \coloneqq \{y\in \Z^d : D(x,y;\omega)\leq k\}$ for the ball of radius $k$ around $x$ in the graph distance in the environment $\omega$. We define the spheres in the graph distance metric $\left(S_k(x,\omega)\right)_{k\in \N_0}$ by
	\begin{equation*}
		S_k(x,\omega) = \{y\in \Z^d : D(x,y;\omega) =  k\} = B_k(x,\omega) \setminus B_{k-1}(x,\omega).
	\end{equation*}
	For an environment $\omega \in \{0,1\}^E$, we define the environment $\omega_{\leq N} \in \{0,1\}^E$ by
	\begin{align*}
		\omega_{\leq N}(e) = \begin{cases}
			\omega (e) & \text{ if } |e|\leq N\\
			0 & \text{ if } |e|>N
		\end{cases}.
	\end{align*}
	So $\omega_{\leq N}$ is the environment after deleting all edges of length longer than $N$.
	We write $\Delta(\omega_{\leq N})$ for the set of connected sets in the environment $\omega_{\leq N}$, i.e.,
	\begin{equation*}
		\Delta(\omega_{\leq N}) = \left\{S \subseteq \Z^d : x \overset{\leq N}{\longleftrightarrow} y \text{ in } S \text{ for all } x,y\in S\right\} ,
	\end{equation*}
	where we say that $x \overset{\leq N}{\longleftrightarrow} y \text{ in } S$ if there exist $a_0,a_1,\ldots,a_\ell \in S$ with $x=a_0, y=a_\ell$, $a_i \sim a_{i-1}$ and $\|a_i-a_{i-1}\|_\infty \leq N$ for $i=1,\ldots,\ell$.
	Note that we do not require that elements of $\Delta(\omega_{\leq N})$ are maximally connected, i.e., elements of $\Delta(\omega_{\leq N})$ are not necessarily open clusters, but they are connected subsets of open clusters. 
	For $u\in \Z^d$ and $K\in \N$, we define $V_u^K \coloneqq Ku + \{0,\ldots, K-1\}^d$ as the box with side length $K$ and base point $Ku$. 
\end{notation}

Note that by local finiteness of the graph, we have that almost surely $|K_x|=\infty$ if and only if $|S_j(x)|\geq 1$ for all $j\in \N$. One other result that we use is the existence of giant components in finite boxes. Let $J$ be a symmetric and irreducible kernel with finite range and let $\beta > \beta_c(J)$. Then the largest cluster inside the box $|V_\mz^K|$ has size comparable to the volume of $|V_\mz^K|$. In particular, there exists some $\rho > 0$ such that for all $K$ large enough
\begin{equation}\label{eq:quadrant}
	\min_{x \in V_\mz^K} \p_{\beta,J} \left(|K_x \left(V_\mz^K\right) | \geq K\right) \geq \rho.
\end{equation}
This result follows directly from  \cite{barsky1991percolation,grimmett1990supercritical}. Using this, we can prove the first intermediary statement.

\begin{claim}\label{claim:Lkdefin}
	Let $J$ be a symmetric and irreducible kernel satisfying $J(x)=\mathcal{O}(\|x\|^{-2d})$, and let $\beta>\beta_c(J)$. Let $N\in \N$ be large enough so that $\beta>\beta_c(J_N)$, where $J_N$ is the kernel defined by $J_N(x)=J(x)\mathbbm{1}_{\|x\|_\infty \leq N}$. (Such an $N$ exists by Theorem \ref{theo:main}).
	For $x\in \Z^d$ and $r\in \left(N,+\infty\right]$, we define the event $L_k(x)$
	\begin{equation}\label{eq:Lkdefin}
		L_k^r(x) \coloneqq \left\{|B_k(x,\omega_{\leq r})|\geq k \text{ and there is no } Z \in \Delta(\omega_{\leq N}) \text{ s.t. } Z \subset B_k(x,\omega_{\leq r}) \text{ and } |Z| \geq  k^{\frac{1}{4d}}   \right\}.
	\end{equation}
	Phrased differently, $L_k^r(x)$ is the event that the cluster containing $x$ in the environment $\omega_{\leq r}$ has `size' at least $k$, but $B_k(x,\omega_{\leq r})$ does not contain a finite-range cluster $Z$ with a size of at least $k^{\frac{1}{4d}}$. Then
	\begin{equation}\label{eq:Lkinequ}
		\p_{\beta,J} \left(L_k^r(x)\right) \leq e^{-\sqrt{k}}
	\end{equation}
	for all $k$ large enough and all $r \in \left(N,+\infty\right]$.
\end{claim}

\begin{proof}
	By translational invariance, it suffices to consider $x=\mz$. Set $K=\lceil k^{\frac{1}{4d}} \rceil$.
	We explore the ball $B_k(\mz,\omega_{\leq r})$ via a variant of breadth first search. For this we proceed as follows.
	\begin{enumerate}
		\item[0.] Start with $A_0 = \left\{\mz\right\}$, $U_0=\{\mz\}$.
		\item[1.] For $i=1,\ldots,\big\lfloor \frac{k}{2} \big\rfloor + 1$:
		\begin{itemize}
			\item[$(a)$] For each $u\in U_{i-1}$, let $y_u \in V_u^K \cap A_{i-1}$; if there exist multiple such vertices, choose the smallest in the lexicographic ordering.
			\item[$(b)$] For each $u\in U_{i-1}$, reveal the set $K_{y_u} \left(V_u^K; \omega_{\leq N}\right)$ and check whether the size of this set satisfies $ \left| K_{y_u} \left(V_u^K; \omega_{\leq N}\right) \right| \geq k^{\frac{1}{4d}}$. 
			\item[$(c)$] Define $A_i \coloneqq S_i(\mz,\omega_{\leq r})=B_i(\mz,\omega_{\leq r})\setminus B_{i-1}(\mz,\omega_{\leq r})$ and \\ $U_i = \Big\{u \in \Z^d: u~\notin~\bigcup_{j=0}^{i-1} U_{j},  B_i(\mz,\omega_{\leq r}) \cap V_u^K \neq \emptyset \Big\}$.
		\end{itemize}
	\end{enumerate}
	In words, the set of {\sl active vertices} $A_i$ after step $i$ is the set $S_i(\mz,\omega_{\leq r})$. For each $u\in \Z^d$ for which $V_u^K \cap B_i(\mz,\omega_{\leq r}) \neq \emptyset$ but $V_u^K \cap B_{i-1}(\mz,\omega_{\leq r}) = \emptyset$, we explore the clusters inside $V_u^K$ in the environment $\omega_{\leq N}$ and check for large clusters in these boxes. 
	
	We define the $\sigma$-algebra $\cF_i$ as the $\sigma$-algebra that contains all the information after step $i$ in the above algorithm, i.e., $\cF_i = \sigma \left( A_0,\ldots,A_i, K_{y_u}\left(V_u^K;\omega_{\leq N}\right) \text{ for all } u \in U_0,\ldots,U_{i-1} \right)$. Also note that the set $U_i$ is measurable with respect to $\cF_i$.
	For each of the sets of the form $V_u^K$ that intersect $S_0(\mz,\omega_{\leq r}) \cup \ldots \cup S_{\lfloor k/2 \rfloor}(\mz,\omega_{\leq r})$, there is a probability of at least $\rho$ that $|K_{y_u}\left(V_u^K\right)| \geq k^{\frac{1}{4d}}$ by \eqref{eq:quadrant}. Also note that for $u\in U_i$, this holds conditionally on the $\sigma$-algebra $\cF_i$, as the sets $A_0,\ldots,A_{i-1},A_i, K_{y_{\tilde{u}}}\left(V_{\tilde{u}}^K; \omega_{\leq N}\right) \text{ for } \tilde{u} \in U_0,\ldots,U_{i-1}$ do not reveal any information about the status of edges $\left\{\{x,y\} : x,y \in V_u^K \right\}$. Also, conditional on $(y_u)_{u\in U_i}$, the different sets $\left(K_{y_u} \left(V_u^K; \omega_{\leq N}\right)\right)_{u\in U_i}$ are independent. Thus we see that
	\begin{align}\label{layer}
		&\notag \p_{\beta,J} \left( \bigcap_{u \in U_i} \left\{\left|K_{y_u} \left(V_u^K; \omega_{\leq N}\right)\right| < k^{\frac{1}{4d}} \right\} \Big| \cF_i \right)
		\\
		&
		\hspace{35mm}
		\leq 
		\left(1 - \min_{x \in V_\mz^K} \p_{\beta,J} \left(|K_x \left(V_\mz^K\right) | \geq K \right)\right)^{|U_i|}
		\leq (1-\rho)^{|U_i|} .
	\end{align}
	Let $X_i=\left|\left\{u\in U_i  :  \left| K_{y_u} \left(V_u^K; \omega_{\leq N}\right) \right| \geq k^{\frac{1}{4d}}\right\}\right|$. Inequality \eqref{layer} then says that the random variable $X_i$ satisfies $\p_{\beta,J} (X_i=0 |\cF_i)\leq (1-\rho)^{|U_i|}$. From this, we can deduce the following claim, which we will prove below.
	
	\begin{claim}\label{claim:expo explo}
		For all $j, \ell \in \N_0$ one has
		\begin{equation}\label{eq:expo explo}
			\p_{\beta,J} \left( \sum_{i=0}^{j} X_i = 0, \sum_{i=0}^{j} |U_i| \geq \ell \right)
			\leq (1-\rho)^{\ell}
		\end{equation}
	\end{claim}
	
	If $|B_{k}(\mz,\omega_{\leq r})|\geq k$, then $B_{\lfloor k/2 \rfloor}(\mz,\omega_{\leq r})=S_0(\mz,\omega_{\leq r}) \cup \ldots \cup S_{\lfloor k/2 \rfloor}(\mz,\omega_{\leq r})$ has size at least $k/2$ and thus, for $k$ large enough, the set $S_0(\mz,\omega_{\leq r}) \cup \ldots \cup S_{\lfloor k/2 \rfloor}(\mz,\omega_{\leq r})$ intersects at least $k^{\frac{3}{5}}$ many sets of the form $V_u^K$ with $u\in \Z^d$. This holds as each set $V_u^K$ contains $K^d = \lceil k^{\frac{1}{4d}} \rceil^d$ many elements, and $k^{\frac{3}{5}} \lceil k^{\frac{1}{4d}} \rceil^d < \frac{k}{2}$ for large enough $k$. Thus we get that $|U_0|+|U_1|+\ldots+|U_{\lfloor k/2 \rfloor}| \geq k^{\frac{3}{5}}$ on the event where $|B_{k}(\mz,\omega_{\leq r})|\geq k$. Thus
	\begin{align}\label{eq:exp sqrtk}
		\notag &\p_{\beta,J} \left( \bigcap_{u \in U_0,\ldots,U_{\lfloor k/2 \rfloor}} \left\{\left|K_{y_u} \left(V_u^K; \omega_{\leq N}\right)\right| < k^{\frac{1}{4d}} \right\}, |B_{k}(\mz,\omega_{\leq r})|\geq k  \right) \\
		&
		\hspace{30mm} \leq
		\p_{\beta,J} \left( \sum_{i=0}^{\lfloor k/2 \rfloor} X_i = 0, \sum_{i=0}^{\lfloor k/2 \rfloor} |U_i| \geq k^{\frac{3}{5}} \right)
		\overset{\eqref{eq:expo explo}}{\leq} (1-\rho)^{k^{\frac{3}{5}}} \leq e^{-\sqrt{k}},
	\end{align}
	where the last inequalities hold for $k$ large enough, and where we used the results of Claim \ref{claim:expo explo} in the second-to-last inequality.\\

	Next, we argue that
	\begin{multline}\label{inclu}
		\bigcap_{u \in U_0,\ldots,U_{\lfloor k/2 \rfloor}} \left\{\left|K_{y_u} \left(V_u^K; \omega_{\leq N}\right)\right| < k^{\frac{1}{4d}} \right\} \\ \supset 
		\left\{ \text{there is no } Z \in \Delta(\omega_{\leq N}) \text{ s.t. } Z \subset B_k(\mz,\omega_{\leq r}) \text{ and } |Z| \geq  k^{\frac{1}{4d}} \right\} .
	\end{multline}
	Indeed, if there exists $u\in \Z^d$ with $\left|K_{y_u}\left(V_u^K; \omega_{\leq N}\right)\right| \geq k^{\frac{1}{4d}}$ and $u\in U_\ell$ for $\ell \leq \lfloor k/2 \rfloor$, then $y_u \in S_{\ell}(\mz,\omega_{\leq r})$ and thus $K_{y_u}\left(V_u^K; \omega_{\leq N}\right) \subset B_{K^d}(y_u,\omega_{\leq r}) \subset B_{\ell + K^d}(\mz,\omega_{\leq r})  \subset B_k(\mz,\omega_{\leq r})$. So in particular, the set $Z$ defined by $Z \coloneqq K_{y_u}\left(V_u^K; \omega_{\leq N}\right)$ satisfies $Z\in \Delta \left(\omega_{\leq N}\right)$ and $Z \subset B_k(\mz,\omega_{\leq r})$. This shows the subset-relation \eqref{inclu}. By the definition of the event $L_k^r(\mz)$, we thus get that
	\begin{equation*}
		\p_{\beta,J} \left(L_k^r (\mz)\right) \leq \p_{\beta,J} \left( \bigcap_{u \in U_0,\ldots,U_{\lfloor k/2 \rfloor}} \left\{\left|K_{y_u} \left(V_u^K; \omega_{\leq N}\right)\right| < k^{\frac{1}{4d}} \right\}, |B_{k}(\mz,\omega_{\leq r})|\geq k  \right) \overset{\eqref{eq:exp sqrtk}}{\leq} e^{-\sqrt{k}} .
	\end{equation*}
	for all large enough $k$, as required.
\end{proof}

We are left to prove Claim \ref{claim:expo explo}.
\begin{proof}[Proof of Claim \ref{claim:expo explo}]
	Inequality \eqref{layer} implies that for all $u_0,\ldots,u_m \in \N_0$ one has
	\begin{equation*}
		\p_{\beta,J} \left(X_m=0 \big| |U_0|=u_0, X_0=0, |U_1|=u_1, X_1=0 \ldots, |U_m|=u_m\right) \leq (1-\rho)^{u_m}.
	\end{equation*}
	Define the set 
	\begin{equation*}
		\mathcal{I_\ell}= \left\{(u_1,\ldots,u_j) \in (\N_0)^j : u_0+\ldots+u_j \geq \ell, \p_{\beta,J} \left( \bigcap_{i=0}^{j} \{|U_i|=u_i, X_i=0\} \right) > 0 \right\}.
	\end{equation*}
	Then we get that
	\begin{align*}
		&\p_{\beta,J} \left( \sum_{i=0}^{j} X_i = 0, \sum_{i=0}^{j} |U_i| \geq \ell \right)
		=
		\sum_{(u_1,\ldots,u_j) \in \mathcal{I}_{\ell}} \p_{\beta,J} \left( \bigcap_{i=0}^{j} \{|U_i|=u_i, X_i=0\} \right)
		\\
		&
		\hspace{7mm} =
		\sum_{(u_1,\ldots,u_j) \in \mathcal{I}_{\ell}}
		\prod_{m=0}^{j} \Bigg( \p_{\beta,J} \left( |U_m|=u_m \Big| \bigcap_{i=0}^{m-1} \{|U_i|=u_i, X_i=0\} \right)
		\\
		& \hspace{47mm} \cdot \p_{\beta,J} \left( X_m=0 \Big| \bigcap_{i=0}^{m-1} \{|U_i|=u_i, X_i=0\}, |U_m|=u_m \right) \Bigg)
		\\
		& \hspace{7mm}
		\leq
		\sum_{(u_1,\ldots,u_j) \in \mathcal{I}_{\ell}}
		\prod_{m=0}^{j} \Bigg( \p_{\beta,J} \left( |U_m|=u_m \Big| \bigcap_{i=0}^{m-1} \{|U_i|=u_i, X_i=0\} \right) (1-\rho)^{u_m} \Bigg)
		\\
		&
		\hspace{7mm} \leq (1-\rho)^{\ell}
		\sum_{(u_1,\ldots,u_j) \in \mathcal{I}_{\ell}}
		\prod_{m=0}^{j} \Bigg( \p_{\beta,J} \left( |U_m|=u_m \Big| \bigcap_{i=0}^{m-1} \{|U_i|=u_i, X_i=0\} \right)  \Bigg) \leq (1-\rho)^{\ell},
	\end{align*}
	which proves the Claim.
\end{proof}

Finally, we are able to go to the proof of Proposition \ref{prop:stretched exp}. One important property of the supercritical finite-range percolation cluster that we will use is the upper bound on the probability of a large finite cluster. Let $J_N$ be the symmetric kernel with finite range defined by $J_N(x)=J(x)\mathbbm{1}_{\|x\|_{\infty}\leq N}$. Then for all $\beta > \beta_c(J_N)$ there exists a constant $c_\beta > 0$ such that
\begin{equation}\label{eq:kesten zhang}
	\p_{\beta,J_N} \left(n \leq |K_\mz| < \infty \right) \leq \exp\left(-c_\beta n^{\frac{1}{d}}\right).
\end{equation}
This result goes back to Kesten and Zhang \cite{kesten1990probability}, improving an earlier result of Chayes, Chayes, and Newman \cite{chayes1987bernoulli}. See also the discussion in Grimmet's book \cite[(8.64)]{grimmett1999percolation} for a proof, and a result of Contreras, Martineau, and Tassion for an improvement of this result to general groups of polynomial growth \cite{contreras2021supercritical}.

\begin{proof}[Proof of Proposition \ref{prop:stretched exp}]
	Let $\eta \in (0,1)$ so that $d+ \eta(d-s) < 0$, which is possible since $s > 2d$. Choose $N \in \N$ (as in Claim \ref{claim:Lkdefin}) such that $\beta>\beta_c(J_N)$, which is possible since $\lim_{N\to \infty} \beta_c(J_N) = \beta_c(J) < \beta$, by Theorem \ref{theo:main}. We will first argue that
	\begin{align}\label{A1-A4}
		& \notag \left\{\exists x \in \mathcal{C}_\infty(\omega) \cap B_n(\mz): D \left( x , \mathcal{C}_\infty(\omega_{\leq N}) \cap B_{2n}(\mz)  \right) > n \right\}\\
		& \notag \hspace{5mm}
		\subset
		\left\{ \exists u \in B_{2n}(\mz), y\in \Z^d : u\sim y, \|u-y\|_\infty \geq n^\eta  \right\} 
		\cup 
		\left\{ \exists z \in B_{n}(\mz) : L_{\lfloor n^{1-\eta} \rfloor}^\infty(z) \text{ holds } \right\}\\
		& \hspace{40mm}
		\cup
		\left\{ \exists u \in B_{2n}(\mz) : \lfloor n^{1-\eta} \rfloor^{\frac{1}{4d}} \leq |K_u(\omega_{\leq N})| < \infty \right\}.
	\end{align}
	For the general structure of the argument, we will show that $\cA_1 \subset \cA_2 \cup \cA_3 \cup \cA_4$, with $\cA_1,\ldots,\cA_4$ the events listed above. We will show that if $\omega \in \cA_1$, $\omega \notin \cA_2$, and $\omega \notin \cA_3$, then $\omega \in \cA_4$. So assume that $\omega \in \cA_1$. Then there exists $x \in \mathcal{C}_\infty(\omega) \cap B_n(\mz)$ so that $x$ is connected to $B_{2n}(\mz)^c$ but $D \left( x , \mathcal{C}_\infty(\omega_{\leq N}) \cap B_{2n}(\mz)  \right) > n$. If $\omega \notin \mathcal{A}_2$, then there is no edge of length at least $n^\eta$ with an endpoint in $B_{2n}(\mz)$ and thus  $B_{n^{1-\eta}}(x,\omega) \subset B_{2n}(\mz)$. If $L_{\lfloor n^{1-\eta} \rfloor }^\infty(x)$ does not occur, then $B_{\lfloor n^{1-\eta} \rfloor }(x,\omega)$ contains a set $Z \in \Delta \left( \omega_{\leq N} \right)$ with $|Z| \geq \lfloor n^{1-\eta} \rfloor^\frac{1}{4d} $. As $Z\subset B_{n^{1-\eta}}(x,\omega) \subset B_{2n}(\mz)$, we can pick $u\in Z \cap B_{2n}(\mz)$.
	
	As $Z\in \Delta \left(\omega_{\leq N}\right)$ and $u\in Z$, we get that $Z \subset K_u(\omega_{\leq N})$, so in particular $|K_u(\omega_{\leq N})| \geq |Z| \geq \lfloor n^{1-\eta} \rfloor^\frac{1}{4d}$.
	Since
	\begin{equation*}
		\text{$D \left( x , \mathcal{C}_\infty(\omega_{\leq N}) \cap B_{2n}(\mz)  \right) > n$ but $D \left( x , Z \right) \leq n^{1-\eta}$,}
	\end{equation*}
	we also get that $Z \nsubseteq \mathcal{C}_\infty(\omega_{\leq N})$. As the set $Z$ is connected in the environment $\omega_{\leq N}$, i.e., $Z \in \Delta\left(\omega_{\leq N}\right)$, we also get that $u \notin \mathcal{C}_\infty(\omega_{\leq N})$, which says that the set $K_u(\omega_{\leq N})$ is finite.
	So in particular the point $ u \in B_{2n}(\mz)$ satisfies $\lfloor n^{1-\eta} \rfloor^{\frac{1}{4d}} \leq |K_u(\omega_{\leq N})| < \infty$, which shows that $\omega \in \cA_4$. Thus we proved the inclusion $\cA_1 \subset \cA_2 \cup \cA_3 \cup \cA_4$, with $\cA_1,\ldots,\cA_4$ the events in \eqref{A1-A4}. By a union bound, we get that
	\begin{align}\label{poly uni}
		& \notag \p_{\beta, J}\left(\exists x \in \mathcal{C}_\infty(\omega) \cap B_n(\mz): D \left( x , \mathcal{C}_\infty(\omega_{\leq N}) \cap B_{2n}(\mz)  \right) > n \right)\\
		& \notag
		\leq 
		\p_{\beta, J} \left( \exists u \in B_{2n}(\mz), y\in \Z^d : u\sim y, \|u-y\|_\infty \geq n^\eta  \right) 
		+ 
		\p_{\beta, J} \left( \exists u \in B_{n}(\mz) : L_{\lfloor n^{1-\eta} \rfloor}^\infty(u) \text{ holds } \right)\\
		&  \hspace{30mm}
		+
		\p_{\beta, J} \left( \exists u \in B_{2n}(\mz) : \lfloor n^{1-\eta} \rfloor^{\frac{1}{4d}} \leq |K_u(\omega_{\leq N})| < \infty \right)
	\end{align}
	and we are only left to show that each of the three summands is polynomially small in $n$.
	For the first summand, we have that
	\begin{align}\label{poly1}
		& \notag \p_{\beta, J} \left( \exists u \in B_{2n}(\mz), y\in \Z^d : u\sim y, \|u-y\|_\infty \geq n^\eta  \right) 
		\leq (4n+1)^d \sum_{y : \|y\|_\infty \geq n^\eta } \p_{\beta, J}(\mz\sim y)\\
		&
		\leq C^\prime n^d \sum_{y : \|y\|_\infty \geq n^\eta } \|y\|^{-s}
		\leq C^{\prime \prime} n^d \left(n^\eta\right)^{d-s}
		\leq C^{\prime \prime} n^{d + \eta (d-s)} 
	\end{align}
	for some constants $C^\prime, C^{\prime\prime} < \infty$ that depend on $d,\beta$, and $J$. The last expression in \eqref{poly1} is polynomially small in $n$ by our assumption on $\eta$ (we assumed that $d+ \eta(d-s) < 0$). The remaining two terms can be upper bounded using a union bound over the possible values of $z\in B_n(\mz)$ or $u \in B_{2n}(\mz)$, respectively, for $n$ large enough:
	\begin{align}
		\notag \p_{\beta, J} & \left( \exists z \in B_{n}(\mz) : L_{\lfloor n^{1-\eta} \rfloor}^{\infty}(z) \text{ holds } \right)
		\leq
		(2n+1)^d \p_{\beta, J} \left(  L_{\lfloor n^{1-\eta} \rfloor}^{\infty}(\mz) \right)\\
		\label{poly2}& \overset{\eqref{eq:Lkinequ}}{\leq} (2n+1)^d \exp \left(-\sqrt{ \lfloor n^{1-\eta} \rfloor }\right), \text{ and } 
		\\
		\notag \p_{\beta, J} & \left( \exists u \in B_{2n}(\mz) : \lfloor n^{1-\eta} \rfloor^{\frac{1}{4d}} \leq |K_u(\omega_{\leq N})| < \infty \right) 
		\leq 
		(4n+1)^d \p_{\beta, J_N}  \left( \lfloor n^{1-\eta} \rfloor^{\frac{1}{4d}} \leq |K_{\mz}| < \infty \right)
		\\
		& \label{poly3}
		\overset{\eqref{eq:kesten zhang}}{\leq}
		(4n+1)^d
		\exp \left(-c_\beta \left(\lfloor n^{1-\eta} \rfloor^{\frac{1}{4d}}\right)^{\frac{1}{d} } \right) .
	\end{align}
	Both quantities in the above expressions are (sub)polynomially small in $n$. So inserting inequalities \eqref{poly1}, \eqref{poly2}, and \eqref{poly3} into \eqref{poly uni}, we see that
	\begin{equation}\label{eq:kappabar}
		\p_{\beta,J} \left( \exists x \in \mathcal{C}_\infty(\omega) \cap B_n(\mz): D \left( x , \mathcal{C}_\infty(\omega_{\leq N}) \cap B_{2n}(\mz)  \right) > n  \right) \leq n^{-\bar{\kappa}}
	\end{equation}
	for some $\bar{\kappa} > 0$ and all $n$ large enough. \\
	
	If there exist $u,v \in B_n(\mz) \cap \cC_\infty(\omega)$ such that $D(u,v) > (C+2)n$, then either 
	$D \left( u , \mathcal{C}_\infty(\omega_{\leq N}) \cap B_{2n}(\mz)  \right) > n$, $D \left( v , \mathcal{C}_\infty(\omega_{\leq N}) \cap B_{2n}(\mz)  \right) > n$, or there are $x,y \in \mathcal{C}_\infty(\omega_{\leq N}) \cap B_{2n}(\mz)$ with $D(x,y; \omega_{\leq N}) > Cn$. Thus we get that for some $C$ large enough
	\begin{align*}
		& \p_{\beta, J} \Big( \exists u,v \in \mathcal{C}_\infty(\omega) \cap B_n(\mz): D \left( u,v \right) > (C+2) n \Big)\\
		& \hspace{10mm}
		\leq 
		\p_{\beta,J} \left( \exists x \in \mathcal{C}_\infty(\omega) \cap B_n(\mz): D \left( x , \mathcal{C}_\infty(\omega_{\leq N}) \cap B_{2n}(\mz)  \right) > n  \right) \\
		& \ \
		\hspace{45mm}+ 
		\p_{\beta, J} \Big( \exists x,y \in \mathcal{C}_\infty(\omega_{\leq N}) \cap B_{2n}(\mz): D \left( x , y; \omega_{\leq N} \right) > C n \Big)\\
		& \hspace{10mm}
		\overset{\eqref{eq:kappabar}}{\leq} 
		n^{-\bar{\kappa}} + \sum_{x,y \in B_{2n}(\mz)} 
		\p_{\beta,J} \Big( \infty > D \left( x , y; \omega_{\leq N} \right) > C n \Big)
		\\
		&
		\hspace{10mm} \overset{\eqref{eq:theret}}{\leq} n^{-\bar{\kappa}} + \left((4n+1)^d\right)^2 e^{-cn} \leq  n^{-\kappa},
	\end{align*}
	where $c>0$ and $\kappa > 0$ are positive constants, and where the last two inequalities hold for $n\in \N$ large enough.
\end{proof}

Finally, we go to the proof of Proposition \ref{propo:zeta}.

\begin{proof}[Proof of Proposition \ref{propo:zeta}]
	Fix $N\in \N$ such that the kernel $J_N$ defined by $J_N(x) = J(x)\mathbbm{1}_{\|x\|\leq N}$ satisfies $\beta> \beta_c(J_N)$. Such a $N\in \N$ exists by Theorem \ref{theo:main}.
	Let $x,y \in B_{n^\zeta}(\mz)$ be arbitrary, where $\zeta = \frac{1}{8d}$. First, explore the two clusters $K_x \left( \omega_{\leq n^{3/4}} \right)$ and $K_y \left( \omega_{\leq n^{3/4}} \right)$. We will now argue that the two inequalities
	\begin{align}\label{eq:argue1}
		& \p_{\beta,J} \left( n < D(x,y) < \infty, \ |K_x \left( \omega_{\leq n^{3/4}} \right)|\leq n^\zeta \text{ or } |K_y \left( \omega_{\leq n^{3/4}} \right)|\leq n^\zeta \right) \leq n^{\zeta-1.5}, \\
		&
		\label{eq:argue2}
		\p_{\beta,J} \left( n < D(x,y) < \infty, \ |K_x \left( \omega_{\leq n^{3/4}} \right)|> n^\zeta, |K_y \left( \omega_{\leq n^{3/4}} \right)|> n^\zeta \right) \leq n^{\zeta-1.5}
	\end{align}
	hold for all large enough $n\in \N$ and arbitrary $x,y \in B_{n^{\zeta}}(\mz)$. Note that the two probabilities in \eqref{eq:argue1} and \eqref{eq:argue2} add up to $\p_{\beta,J} \left( n < D(x,y) < \infty \right)$.
	Together with a union bound, inequalities \eqref{eq:argue1} and \eqref{eq:argue2} imply that
	\begin{align*}
		& \p_{\beta, J} \left( \exists x,y \in B_{n^\zeta}(\mz) : n < D(x,y) < \infty  \right)
		\leq 
		\sum_{ x,y \in B_{n^\zeta}(\mz)}
		\p_{\beta, J} \left(  n < D(x,y) < \infty  \right)\\
		&
		\leq \sum_{ x,y \in B_{n^\zeta}(\mz)} 2 n^{\zeta-1.5} \leq C n^{2d\zeta+\zeta-1.5} = C n^{\frac{1}{4} + \frac{1}{8d}-1.5} \leq n^{-1.1}
	\end{align*}
	for some constant $C<\infty$ and all $n\in \N$ large enough. Thus we are left to show that \eqref{eq:argue1} and \eqref{eq:argue2} hold. We start with \eqref{eq:argue1}. If $x \in K_y(\omega_{\leq n^{3/4}})$ and $|K_y(\omega_{\leq n^{3/4}})| \leq n^\zeta$, there is nothing to show as the chemical distance between $x$ and $y$ can be at most $n^\zeta$ in this case. Using this observation and the symmetry between $x$ and $y$ we get that
	\begin{align}
		\notag & \p_{\beta,J} \left( n < D(x,y) < \infty, \ |K_x \left( \omega_{\leq n^{3/4}} \right)|\leq n^\zeta \text{ or } |K_y \left( \omega_{\leq n^{3/4}} \right)|\leq n^\zeta \right) 
		\\
		\notag & \hspace{22mm}
		\leq 2 \p_{\beta,J} \left( n < D(x,y) < \infty, |K_y \left( \omega_{\leq n^{3/4}} \right)|\leq n^\zeta \right) \\
		&\notag  \hspace{22mm}
		= 2 \p_{\beta,J} \left( n < D(x,y) < \infty, |K_y \left( \omega_{\leq n^{3/4}} \right)|\leq n^\zeta, x \notin K_y \left( \omega_{\leq n^{3/4}} \right) \right) \\
		\label{eq:two step}& \hspace{22mm}
		\leq 2 \p_{\beta,J} \left( D(x,y) < \infty, |K_y \left( \omega_{\leq n^{3/4}} \right)|\leq n^\zeta, x \notin K_y \left( \omega_{\leq n^{3/4}} \right) \right) .
	\end{align}
	If $ D(x,y) < \infty$, but $x \notin K_y \left( \omega_{\leq n^{3/4}} \right)$, there needs to exist at least one open edge $\{u,v\}$ with $\|u-v\|_\infty > n^{3/4}$ and $u \in K_y(\omega_{\leq n^{3/4}}), v \notin K_y(\omega_{\leq n^{3/4}})$. As these edges are independent from the configuration $K_y \left( \omega_{\leq n^{3/4}} \right)$, we get that
	\begin{align*}
		& \p_{\beta,J} \left( D(x,y) < \infty, |K_y \left( \omega_{\leq n^{3/4}} \right)|\leq n^\zeta, x \notin K_y \left( \omega_{\leq n^{3/4}} \right) \right) \\
		& \leq \p_{\beta,J} \left( |K_y \left( \omega_{\leq n^{3/4}} \right)|\leq n^\zeta, \omega(\{u,v\})=1 \text{ for some } u\in K_y \left( \omega_{\leq n^{3/4}} \right) \text{ and } v\notin B_{n^{3/4}}(u) \right) \\
		& \leq n^\zeta \p_{\beta,J} \left( \omega(\{\mz,v\})=1 \text{ for some } v\notin B_{n^{3/4}}(\mz) \right)
		\leq  n^\zeta \sum_{v\notin B_{n^{3/4}}(\mz)} \p_{\beta,J} \left(\mz \sim v \right)\\
		&
		\leq  n^\zeta \sum_{v\notin B_{n^{3/4}}(\mz)} C_1 \|v\|^{-s}
		\leq C_2 n^\zeta n^{\frac{3}{4}(d-s)} \leq \frac{1}{2} n^{\zeta-1.5}
	\end{align*}
	for some constants $C_1,C_2$ and $n\in \N$ large enough. In the last inequality, we used that $d-s<-d\leq -2$. Plugging this into \eqref{eq:two step} finishes the proof of \eqref{eq:argue1}.\\
	
	\noindent
	Next, we prove \eqref{eq:argue2}. As in Notation \ref{notation}, we define sets $\left(S_{j}\left(x,\omega_{\leq n^{3/4}}\right)\right)_{j=0}^{\lfloor n^{\zeta} \rfloor}$ by $S_0\left(x,\omega_{\leq n^{3/4}}\right) = \{x\}$ and
	\begin{equation*}
		S_{j+1}\left(x,\omega_{\leq n^{3/4}}\right) = B_{j+1}\left(x,\omega_{\leq n^{3/4}}\right) \setminus B_{j}\left(x,\omega_{\leq n^{3/4}}\right).
	\end{equation*}
	Analogously, we define the sets $\left(S_{j}(y , \omega_{\leq n^{3/4}} )\right)_{j=0}^{\lfloor n^{\zeta} \rfloor}$.
	We only explore these spheres for $\lfloor n^{\zeta} \rfloor$ many steps to ensure that all these sets are contained in $B_{n^{0.9}}(\mz)$, where the exponent $0.9$ is arbitrary and any other exponent close enough to $1$ would also work. Note that 
	\begin{equation*}
		\text{$|K_x(\omega_{\leq n^{3/4}})| > n^\zeta$ if and only if }
		\left| \bigcup_{i=0}^{\lfloor n^{\zeta} \rfloor} S_i\left(x,\omega_{\leq n^{3/4}}\right) \right| > n^\zeta.
	\end{equation*}
	Define the events $\tilde{L}(x) $ and $\widetilde{Q}(x) $ by
	\begin{align}\label{eq:Lktildedefin}
		& \tilde{L}(x) \coloneqq \left\{\left|  B_{ n^{\zeta}}\left(x,\omega_{\leq n^{3/4}}\right) \right| > n^\zeta , \cC_\infty(\omega_{\leq N}) \cap   B_{ n^{\zeta}}\left(x,\omega_{\leq n^{3/4}}\right) = \emptyset \right\} \text{ and }
		\\
		&\notag
		\widetilde{Q}(x) \coloneqq
		\left\{ \left| B_{n^{\zeta}}\left(x,\omega_{\leq n^{3/4}}\right) \right|> n^{\zeta},  \nexists Z \in \Delta(\omega_{\leq N}) \text{ s.t. } Z \subset B_{n^{\zeta}}\left(x,\omega_{\leq n^{3/4}}\right) \text{ and } |Z| \geq  n^{\frac{\zeta}{4d}} \right\}
	\end{align}
	Claim \ref{claim:Lkdefin} applied with $r=n^{\frac{3}{4}}, k=n^{\zeta}$ shows that
	\begin{align}\label{eq:sqrtzeta}
		\notag & \p_{\beta,J}\Big( |B_{n^{\zeta}}\left(x,\omega_{\leq n^{3/4}}\right)|> n^{\zeta},  \nexists Z \in \Delta(\omega_{\leq N}) \text{ s.t. } Z \subset B_{n^{\zeta}}\left(x,\omega_{\leq n^{3/4}}\right) \text{ and } |Z| \geq  n^{\frac{\zeta}{4d}}  \Big) \\
		& \ \ = \p_{\beta,J} \left(\widetilde{Q}(x)\right) \leq e^{-\sqrt{n^{\zeta}}}
	\end{align}
	for all $n\in \N$ large enough. Assume that the event $\tilde{L}(x)$ holds but the event $\widetilde{Q}(x)$ does not hold. Let $Z \in \Delta \left(\omega_{\leq N}\right)$ be such that $Z \subset B_{n^{\zeta}}\left(x,\omega_{\leq n^{3/4}}\right)$ and $|Z| \geq n^{\frac{\zeta}{4d}}$. Note that the conditions $Z \subset B_{n^{\zeta}}\left(x,\omega_{\leq n^{3/4}}\right)$ and $x\in B_{n^\zeta}(\mz)$ already imply that $Z \subset B_n(\mz)$. Let $z\in Z$. As we assumed that the event $\tilde{L}(x)$ holds and $z \in B_{n^\zeta}\left(x, \omega_{\leq n^{3/4}}\right)$, we get that $z \notin \mathcal{C}_\infty(\omega_{\leq N})$ and thus
	\begin{equation*}
		\infty > \left|K_z(\omega_{\leq N})\right| \geq |Z| \geq n^{\frac{\zeta}{4d}} .
	\end{equation*}
	So in particular, we see that on the event $\tilde{L}(x) \cap \widetilde{Q}(x)^c$, there needs to exist $z\in B_n(\mz)$ for which $\infty > \left|K_z(\omega_{\leq N})\right| \geq n^{\frac{\zeta}{4d}}$. We thus get that
	\begin{align}\label{eq:gammaeq}
		\notag \p_{\beta, J}(\tilde{L}(x)) & \leq   
		\p_{\beta, J}(\widetilde{Q}(x)) + \p_{\beta,J}\left( \tilde{L}(x) \cap \widetilde{Q}(x)^c  \right)
		\\
		& \notag \leq   
		\p_{\beta, J}(\widetilde{Q}(x)) + \sum_{z\in B_n(\mz)} \p_{\beta,J} \left( n^{\frac{\zeta}{4d}} < |K_z(\omega_{\leq N})| < \infty \right)
		\\
		& \notag 
		\overset{\eqref{eq:sqrtzeta}}{\leq} e^{-\sqrt{n^{\zeta}}} + \sum_{z\in B_n(\mz)} \p_{\beta,J} \left( n^{\frac{\zeta}{4d}} < |K_z(\omega_{\leq N})| < \infty \right) \\
		&
		\overset{\eqref{eq:kesten zhang}}{\leq} e^{-\sqrt{n^{\zeta}}} + (2n+1)^d \exp\left(-c_\beta n^{\frac{\zeta}{4d} \cdot \frac{1}{d}}\right)  \leq  \exp\left(-n^\gamma\right)
	\end{align}
	for some $\gamma > 0$ and all large enough $n$. Similarly, we also get that $\p_{\beta, J}(\tilde{L}(y)) \leq \exp\left( - n^\gamma \right)$, where the event $\tilde{L}(y)$ is defined as in \eqref{eq:Lktildedefin}, with $x$ replaced by $y$. If both events $\tilde{L}(x)$ and $\tilde{L}(y)$ do not hold, then both $x$ and $y$ are connected by open paths of length at most $n^\zeta$ in the environment $\omega_{\leq n^{3/4}}$ to points $u_x \in \cC_\infty\left( \omega_{\leq N} \right)$ and $u_y \in \cC_\infty\left( \omega_{\leq N} \right)$, respectively. Furthermore, for $n$ large enough, we can choose the points $u_x, u_y$ such that $u_x,u_y \in  B_{n^{0.9}}(\mz)$. This holds, as the paths between $x$ and $u_x$, or between $y$ and $u_y$, respectively, have a length of at most $n^\zeta \leq n^{1/8}$, and the edges in the path have a length of at most $n^{3/4}$. In particular, we see that if both events $\tilde{L}(x)$ and $\tilde{L}(y)$ do not hold, but $|K_x \left( \omega_{\leq n^{3/4}} \right)|> n^\zeta, |K_y \left( \omega_{\leq n^{3/4}} \right)|> n^\zeta$, and $n<D(x,y)<\infty$, then there exist $u_x, u_y \in B_{n^{0.9}(\mz)} \cap \cC_\infty (\omega_{\leq N})$, such that $D(u_x,u_y) > n-2n^\zeta$. Thus we get that
	\begin{align*}
		& \p_{\beta,J} \left( n < D(x,y) < \infty, \ |K_x \left( \omega_{\leq n^{3/4}} \right)|> n^\zeta, |K_y \left( \omega_{\leq n^{3/4}} \right)|> n^\zeta \right)\\
		& \hspace{12mm}
		\leq
		\p_{\beta,J} \left( n < D(x,y) , \ |K_x \left( \omega_{\leq n^{3/4}} \right)|> n^\zeta, |K_y \left( \omega_{\leq n^{3/4}} \right)|> n^\zeta \right)\\
		& \hspace{12mm} \leq \p_{\beta,J} \left( \tilde{L}(x) \cup \tilde{L}(y) \cup \left\{\exists u_x,u_y \in  B_{n^{0.9}}(\mz) \cap \cC_\infty \left(\omega_{\leq N}\right) : D(u_x,u_y) > n-2n^\zeta \right\} \right)\\
		& \hspace{12mm}
		\overset{\eqref{eq:gammaeq}}{\leq} 2 \exp\left(-n^\gamma\right) + \sum _{u_x,u_y \in  B_{n^{0.9}}(\mz) } \p_{\beta,J} \left( \infty >  D(u_x,u_y;\omega_{\leq N}) > n-2n^\zeta  \right)\\
		& \hspace{12mm}
		\overset{\eqref{eq:theret}}{\leq } 2 \exp\left(-n^\gamma\right) + (2n^{0.9}+1)^d C e^{-cn} 
	\end{align*}
	for some constants $c>0$ and $C<\infty$, where the last two inequalities hold for all large enough $n$. So in particular this implies that \eqref{eq:argue2} holds.
\end{proof}

\section{Applications of Theorem \ref{theo:main}}\label{sec:applis}

Next, we discuss various applications of Theorem \ref{theo:main}.

\subsection{Locality of long-range percolation}

We start with the proof of Theorem \ref{theo:locality}. Note that Theorem \ref{theo:main} also provides a locality result of the type $\beta_c(J_n) \to \beta_c(J)$ if one defines the kernels $J_n$ by
\begin{equation*}
	J_n(x)= \begin{cases}
		J(x) & \text{ if } \|x\|_\infty \leq n \\
		0 & \text{ else }
	\end{cases}.
\end{equation*}

\begin{proof}[Proof of Theorem \ref{theo:locality}]
	Let $J_n \to J$ in $L_1$ on $\Z^d$. In order to show the result, we will show that
	\begin{align}
		&\label{eq:liminfconv} \liminf_{n \to \infty} \beta_c(J_n) \geq \beta_c(J) \text{ and } \\
		&\label{eq:limsupconv} \limsup_{n \to \infty} \beta_c(J_n) \leq \beta_c(J).
	\end{align}
	We need to show both inequalities for the case where $J_n$ converges to a resilient kernel $J$, and for the case where $J_n$ converges to a general kernel from above.
	
	We start with the proof of \eqref{eq:liminfconv}; here we do not make a distinction whether the kernel $J$ is resilient or not. Let $\beta < \beta_c(J)$. By the proof of sharpness of the phase transition by Duminil-Copin and Tassion \cite{duminil2016new,duminil2017new}, there exists a finite set $S \subset \Z^d$ such that $\mz \in S \subset \Z^d$ and
	\begin{equation}\label{eq:duminilcopin tassion}
		\phi_{\beta,J} \left( S \right) \coloneqq
		\sum_{x \in S} \sum_{y \notin S} \p_{\beta,J} \left( \mz \overset{S}{\longleftrightarrow} x\right)
		\left(1-e^{-\beta J(x-y)}\right) 
		<  1 .
	\end{equation}
	As $J_n \to J$ pointwise and $S$ is a finite set, this implies that $\lim_{n \to \infty}  \p_{\beta,J_n} \left( \mz \overset{S}{\longleftrightarrow} x\right) =  \p_{\beta,J} \left( \mz \overset{S}{\longleftrightarrow} x\right)$ for all $x\in S$.
	As $J_n$ converges to $J$ in $L_1$ of $\Z^d$, we also get that for each $x\in S$, the sum $\sum_{y \notin S} 	\left(1-e^{-\beta J_n(x-y)}\right)$ converges to $\sum_{y \notin S} 	\left(1-e^{-\beta J(x-y)}\right)$. Thus we also get that $\lim_{n \to \infty} \phi_{\beta,J_n} \left( S \right) = \phi_{\beta,J} \left( S \right)$. So in particular, by \eqref{eq:duminilcopin tassion}, one has
	\begin{equation*}
		\phi_{\beta,J_n} \left( S \right) =
		\sum_{x \in S} \sum_{y \notin S} \p_{\beta,J_n} \left( \mz \overset{S}{\longleftrightarrow} x\right)
		\left(1-e^{-\beta J_n(x-y)}\right) 
		<  1 
	\end{equation*}
	for all large enough $n$. The condition $\phi_{\beta,J_n} \left( S \right) < 1$ implies that $\beta \leq \beta_c(J_n)$ \cite{duminil2016new,duminil2017new} and thus $\beta \leq \liminf_{n \to \infty} \beta_c(J_n) $. As $\beta < \beta_c(J)$ was arbitrary, this finishes the proof of \eqref{eq:liminfconv}. \\
	
	We still need to show that \eqref{eq:limsupconv} holds. We start with the case where $J$ is a general kernel and $J_n$ converges to $J$ from above. If $J_n \geq J$, then $\beta_c(J_n) \leq \beta_c(J)$, which implies that $\eqref{eq:limsupconv}$ holds for this case.\\
	
	Next, let $J$ be a resilient kernel and let $(J_n)_{n\in \N}$ be a sequence of kernels such that $J_n$ converges to $J$ in $L_1$, not necessarily from above. Let $\eps > 0$. Take $N \in \N$ large enough so that the kernel $\tilde{J}$ defined by
	\begin{equation*}
		\tilde{J}(x)= \begin{cases}
			J(x) & \text{ if } \|x\|_\infty \leq N\\
			0 & \text{ else }
		\end{cases}
	\end{equation*}
	satisfies $\beta_c(\tilde{J}) < \beta_c(J) + \eps$. Such $N\in \N$ exists as $J$ is a resilient kernel. Define the kernel $\tilde{J_n}$ by
	\begin{equation*}
		\tilde{J}_n(x)= \begin{cases}
			J_n(x) & \text{ if } \|x\|_\infty \leq N\\
			0 & \text{ else }
		\end{cases}.
	\end{equation*}
	Then $\tilde{J_n}\to \tilde{J}$ pointwise. By construction one also has $\tilde{J_n} \leq J_n$ and thus $\beta_c(J_n) \leq \beta_c(\tilde{J_n})$. As $\tilde{J}$ has finite range and $\tilde{J}_n \to \tilde{J}$ pointwise, we have, for every $\delta \in (0,1)$, that $\tilde{J}_n(x)\geq (1-\delta)\tilde{J}(x)$ for all $x \in \Z^d$, for all large enough $n \in \N$. Thus, for such large enough $n \in \N$ we get that $\p_{\beta, \tilde{J_n}}$ stochastically dominates $\p_{\beta, (1-\delta)\tilde{J}} = \p_{(1-\delta)\beta,\tilde{J}}$, which directly implies that $\beta_c(\tilde{J}_n) \leq \frac{1}{1-\delta}\beta_c(\tilde{J})$ for $n\in \N$ large enough.
	Combining the previous inequalities we get for $n$ large enough that
	\begin{equation*}
		\beta_c(J_n) \leq \beta_c(\tilde{J_n}) \leq \frac{1}{1-\delta}\beta_c(\tilde{J}) < \frac{1}{1-\delta} \left(\beta_c(J) + \eps\right) ,
	\end{equation*}
	which implies that $\limsup_{n \to \infty} \beta_c(J_n) \leq \frac{1}{1-\delta} \left(\beta_c(J) + \eps\right)$. As $\eps > 0$ and $\delta \in (0,1)$ were arbitrary, this finishes the proof.
\end{proof}

\begin{remark}
	Note that the proof of \eqref{eq:liminfconv} used the $L_1$-convergence of the kernel $J_n$, but did not use any other property of the limiting kernel $J$. Contrary to that, the proof of \eqref{eq:limsupconv} used pointwise convergence of the kernels only, but also required resilience of the kernel $J$, or that the approximating sequence converges from above.
\end{remark}

\subsection{Continuity of the percolation probability outside criticality}

Next, we go to the proof of Corollary \ref{coro:percoprobconv}. Note that the restriction to $\beta \neq \beta_c(J)$ in the statement is essential. Indeed, proving the result of Corollary \ref{coro:percoprobconv} for $\beta=\beta_c(J)$ would imply continuity of the percolation phase transition, which is an important open problem even for finite-range percolation in intermediate dimensions.
A similar statement to that of Corollary \ref{coro:percoprobconv} for long-range percolation with exponentially decaying connection probabilities was already proven by Meester and Steif in \cite[Theorem 1.4]{meester1996continuity} and we follow a similar strategy of the proof as they did. 

\begin{proof}[Proof of Corollary \ref{coro:percoprobconv}]
	In order to show the corollary, we need to show that
	\begin{align}
		&\label{eq:limsup} \limsup_{n\to \infty} \theta \left(\beta_n, J_n\right) \leq \theta \left(\beta,J\right) \text{ and }\\
		&\label{eq:liminf} \liminf_{n\to \infty} \theta \left(\beta_n, J_n\right) \geq \theta \left(\beta,J\right).
	\end{align}
	We start with the proof of \eqref{eq:limsup}. Let $\eps > 0$. As the number of finite subsets of $\Z^d$ is countable, there exists a finite collection of different finite sets $\left(A_i\right)_{i\in \{1,\ldots,m\}}$ such that $\mz \in A_i \subset \Z^d$ for all $i\in \{1,\ldots,m\}$ and 
	\begin{equation*}
		\p_{\beta,J} \left(K_\mz \in \{A_1,\ldots,A_m\}\right) = \sum_{i=1}^{m} \p_{\beta,J} \left(K_\mz = A_i\right) \geq 1-\theta(\beta,J)-\eps.
	\end{equation*}
	As $A_i$ is a finite set, $\beta_n \to \beta$, and $J_n \to J$ in $L_1$, we get that
	\begin{equation*}
		\lim_{n\to \infty} \sum_{i=1}^{m} \p_{\beta_n,J_n} \left(K_\mz = A_i\right) = \sum_{i=1}^{m} \p_{\beta,J} \left(K_\mz = A_i\right) \geq 1-\theta(\beta,J)-\eps,
	\end{equation*}
	so in particular
	\begin{equation*}
		 \sum_{i=1}^{m} \p_{\beta_n,J_n} \left(K_\mz = A_i\right) \geq  1-\theta(\beta,J)-2\eps
	\end{equation*}
	for all $n$ large enough and thus also
	\begin{equation*}
		\p_{\beta_n,J_n } \left(|K_\mz|=\infty\right) \leq 1 - \sum_{i=1}^{m} \p_{\beta_n,J_n} \left(K_\mz = A_i\right) \leq \theta(\beta,J) + 2 \eps
	\end{equation*}
	for all $n$ large enough, which finishes the proof of \eqref{eq:limsup}, as $\eps > 0$ was arbitrary.
	
	Next, let us prove \eqref{eq:liminf}. We first assume that $\beta < \beta_c(J)$. As $\beta_n \to \beta$ and $\beta_c(J_n) \to \beta_c(J)$ for $n\to \infty$ (by Theorem \ref{theo:locality}), we have that $\beta_n < \beta_c(J_n)$ for all $n$ large enough. So in particular
	\begin{equation*}
		\theta(\beta_n,J_n) = 0 = \theta(\beta,J)
	\end{equation*}
	for all large enough $n\in \N$. Next, let us turn to the case $\beta > \beta_c(J)$. For $N\in \N$, define the kernel $I_N$ by $I_N(x) = J(x)\mathbbm{1}_{\{\|x\|_\infty\leq N\}}$. For a percolation environment $\omega \in \{0,1\}^E$, we define $\omega_{\leq N} \in \{0,1\}^E$ by
	\begin{align*}
		\omega_{\leq N}(e) = \begin{cases}
			\omega (e) & \text{ if } |e|\leq N\\
			0 & \text{ if } |e|>N
		\end{cases}.
	\end{align*}
	 As $J$ is a resilient kernel by assumption, we know from Theorem \ref{theo:locality} that $\beta_c(I_N) \to \beta_c(J)$ as $N\to \infty$, so in particular we can fix $M \in \N$ large enough so that $\beta > \beta_c(I_M)$. We couple the measures $\left(\p_{\beta,I_k}\right)_{k\in \N}$ for different values of $k \in \N$ using the Harris coupling (see e.g. \cite{heydenreich2017progress}), and we write  $\mathcal{C}_{\infty}(\omega_{\leq M})$ for the (almost surely unique) infinite cluster sampled by $\p_{\beta,I_M}$.
	We write $\mz \leftrightarrow \mathcal{C}_{\infty}(\omega_{\leq M})$ if the origin is connected to the infinite finite-range percolation cluster and we write $\mz \overset{\leq N}{\longleftrightarrow} \mathcal{C}_{\infty}(\omega_{\leq M})$ if the origin is connected to $\mathcal{C}_{\infty}(\omega_{\leq M})$ using only edges $\{x,y\}$ with $\|x-y\|_\infty \leq N$.
	Note that the almost sure uniqueness of the infinite cluster implies that the events $\left\{\mz \leftrightarrow \mathcal{C}_{\infty}(\omega_{\leq M})\right\}$ and $\left\{\mz \leftrightarrow \mathcal{C}_{\infty}(\omega)\right\}$ are almost surely identical and that for $N\geq M$ also the two events $\left\{ \mz \overset{\leq N}{\longleftrightarrow} \mathcal{C}_{\infty}(\omega_{\leq N})\right\}$ and $\left\{ \mz \overset{\leq N}{\longleftrightarrow} \mathcal{C}_{\infty}(\omega_{\leq M})\right\}$ are almost surely the same. Thus we get that
	\begin{align*}
		\theta(\beta,J) - \theta(\beta,I_N) & = \p\left( \left\{\mz \leftrightarrow \mathcal{C}_{\infty}(\omega)\right\} \cap \left\{ \mz \overset{\leq N}{\longleftrightarrow} \mathcal{C}_{\infty}(\omega_{\leq N})\right\}^c \right)
		\\
		&
		 = \p\left( \left\{\mz \leftrightarrow \mathcal{C}_{\infty}(\omega_{\leq M})\right\} \cap \left\{ \mz \overset{\leq N}{\longleftrightarrow} \mathcal{C}_{\infty}(\omega_{\leq M})\right\}^c \right)
	\end{align*}
	which converges to $0$ as $N \to \infty$ by the uniqueness of the infinite open cluster.
	So for each $\eps > 0$ we can find $N \geq M$ large enough so that
	\begin{equation*}
		\theta \left(\beta, I_N\right) \geq \theta(\beta,J) - \eps \ \text{ and } \ \beta > \beta_c(I_N)
	\end{equation*}
	The function  $\tilde{\beta} \mapsto \theta \left(\tilde{\beta}, I_N\right)$ is continuous at $\tilde{\beta} = \beta$, since $\beta > \beta_c(I_M) \geq \beta_c(I_N)$, see \cite[Lemma 8.10]{grimmett1999percolation}. Thus we can pick $\delta > 0$ small enough so that 
	\begin{equation*}
		\theta \left(\beta - \delta, I_N\right) \geq \theta(\beta,J) - 2\eps.
	\end{equation*}
	As $\beta_n \to \beta$ and $J_n \to J$ in $L_1$ (and thus also pointwise) this implies that $\beta_n J_n(x) \geq (\beta-\delta) I_N(x)$ for all $n\in \N$ large enough and $x\in \Z^d$ (Remember that $I_N$ has finite range). Thus one has for all edges $\{x,y\} \subset \Z^d$ and all $n$ large enough that $\p_{\beta_n, J_n} \left(\{x,y\} \text{ open}\right) \geq \p_{(\beta-\delta),I_N} \left(\{x,y\} \text{ open}\right)$. As different edges are independent, this pointwise bound already implies the corresponding stochastic dominance for the percolation measures, i.e., $\p_{\beta_n, J_n} \gtrsim \p_{(\beta-\delta),I_N}$ for all large enough $n$. For such sufficiently large $n$, we thus get by the stochastic domination that
	\begin{equation*}
		\theta(\beta_n,J_n) \geq \theta(\beta-\delta, I_N) \geq \theta(\beta,J) -2\eps,
	\end{equation*}
	which finishes the proof as $\eps > 0$ was arbitrary.
\end{proof}

\begin{remark}
	Note that the proof of \eqref{eq:limsup} did not use any previous results and holds without any further assumptions on the kernel $J$. Contrary to that, the proof of inequality \eqref{eq:liminf} heavily uses the resilience of the kernel $J$ and it can be easily seen that inequality \eqref{eq:liminf} does not hold in dimension $d=1$. However, the proof of \eqref{eq:liminf} does not use the $L_1$-convergence of $J_n$ to $J$, but requires pointwise convergence only.
\end{remark}

\subsection{Existence of large clusters}

For the proof of Theorem \ref{theo:large clusters}, we need the following claim for finite-range percolation. It says that with high probability all points $x,y$ in the infinite cluster of a box are connected in a slightly bigger box.

\begin{claim}\label{claim:connection in boxes}
	Let $J:\Z^d \to \left[0,\infty\right)$ be an irreducible and symmetric kernel with finite range, and let $\beta > \beta_c(J)$. Then
	\begin{align*}
		\lim_{n\to \infty} \p_{\beta,J} \left(\forall x,y \in \mathcal{C}_\infty \cap B_{n-\sqrt{n}}(\mz) : x \overset{B_n(\mz)}{\longleftrightarrow} y\right) = 1 .
	\end{align*}
\end{claim}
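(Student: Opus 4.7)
This is a classical-type statement about supercritical finite-range percolation, and my plan is to prove it by the Grimmett--Marstrand / Pisztora coarse-graining. Fix a small $\eps > 0$. Tile $\Z^d$ into cubes $(Q_i)$ of a large constant side length $K=K(\eps)$, and declare $Q_i$ \emph{good} if a local event $G_{Q_i}$ occurs. The event $G_Q$ depends only on edges inside the cube $Q^\ast$ of side $3K$ concentric with $Q$, and is designed so that on $G_Q$ there is a unique ``giant crossing cluster'' $C(Q) \subset Q^\ast$ satisfying: (a) $C(Q)$ touches every face of $Q$; (b) for every adjacent good cube $Q'$, the clusters $C(Q)$ and $C(Q')$ are connected inside $Q^\ast \cup (Q')^\ast$; (c) every open cluster of diameter at least $K/10$ lying in $Q^\ast$ is a subset of $C(Q)$. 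By the Grimmett--Marstrand finite-size criterion for supercritical finite-range percolation \cite{grimmett1990supercritical,pisztora1996surface}, $K$ can be chosen so that $\p_{\beta,J}(G_Q^c) \leq \eps$, and by locality the bad-cube indicators form a finite-range-dependent field stochastically dominated by a highly subcritical Bernoulli site percolation on the cube lattice.

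A standard Peierls estimate on the renormalized process then shows that, with probability $1-o(1)$ as $n \to \infty$, every connected component of bad cubes meeting $B_{n - \sqrt{n}/2}(\mz)$ has diameter at most $\sqrt{n}/(10K)$, and that the good cubes well inside $B_n(\mz)$ form a single connected component in the renormalized graph. On this event I argue as follows for $x,y \in \mathcal{C}_\infty \cap B_{n-\sqrt{n}}(\mz)$. Since $x \in \mathcal{C}_\infty$, the cluster of $x$ inside $Q_x^\ast$ must reach $\partial Q_x^\ast$ and hence has diameter $\geq K$; so if the cube $Q_x$ containing $x$ is good, property (c) gives $x \in C(Q_x)$. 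If $Q_x$ is bad, the bad island containing $Q_x$ has diameter at most $\sqrt{n}/(10K)$, and the infinite cluster provides an open path from $x$ out of that island into an adjacent good cube $Q_x'$; this path lies entirely in $B_n(\mz)$, and at the moment it enters $Q_x'$ it is in a cluster of diameter $\geq K/10$ inside $(Q_x')^\ast$, hence in $C(Q_x')$ by (c). The same reasoning applies to $y$. Finally, property (b) chains the giant clusters of any two good cubes inside $B_n(\mz)$ through a renormalized path of adjacent good cubes, all of whose $Q^\ast$-neighborhoods sit inside $B_n(\mz)$ (provided we route through good cubes well away from $\partial B_n(\mz)$, which the Peierls event permits). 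This yields $x \overset{B_n(\mz)}{\longleftrightarrow} y$.

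The main technical obstacle is the construction of $G_Q$ so that properties (a)--(c) hold simultaneously with probability tending to $1$ as $K \to \infty$. Properties (a) and (b) are the classical content of the Grimmett--Marstrand crossing lemma, but property (c) is more subtle: it has to be arranged by a ``sprinkling'' argument that, given (a) for $Q^\ast$, uses some reserved edges to merge every open cluster of diameter $\geq K/10$ inside $Q^\ast$ into $C(Q)$ with high probability. This is a standard block-construction ingredient implemented in the nearest-neighbor setting in Pisztora's paper \cite{pisztora1996surface}, and for our finite-range bounded-degree setup on $\Z^d$ his proof extends essentially verbatim. The remaining bookkeeping (a union bound over $O(n^d/K^d)$ cubes for the good-cube event, and subcritical estimates for the bad-cube Peierls bound) is routine and costs nothing compared to the $1-o(1)$ budget.
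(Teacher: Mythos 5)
Your proposal is a genuine re-implementation of the Pisztora coarse-graining, whereas the paper takes a much lighter route: it invokes the Cerf--Th\'eret bound $\p_{\beta,J}(x\leftrightarrow y,\ D(x,y)\geq l)\leq A_1 e^{-A_2 l}$ for $l\geq A_3\|x-y\|$ (Theorem~\ref{theo:theret}) and the stretched-exponential bound \eqref{eq:roberto} that $B_m(\mz)\cap\cC_\infty\neq\emptyset$ with overwhelming probability, both as black boxes. It then defines the event $\cG_n$ that (i) every $x\in B_n(\mz)$ has a $\cC_\infty$-point within distance $n^{1/4}$ and (ii) $D(x,y)\leq A_3(\|x-y\|\vee n^{1/4})$ for all $x,y\in\cC_\infty\cap B_n(\mz)$, shows $\p_\beta(\cG_n)\geq 1-1/n$ by a union bound, and finishes by chaining $x$ to $y$ along a string of markers $a_0,\dots,a_k$ at mutual spacing $n^{1/4}$, each with a nearby $\cC_\infty$-point $x_i$; consecutive $x_i$'s are then joined by paths of Euclidean diameter $O(n^{1/4})\ll\sqrt{n}$, which therefore stay inside $B_n(\mz)$. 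This avoids the block-lattice bookkeeping entirely. Of course the Cerf--Th\'eret estimate is itself proved by an Antal--Pisztora renormalization, so the two proofs are cousins, but the paper's version is at a much higher level of abstraction and correspondingly shorter.

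Your outline does work, but a few steps that you label routine are the ones that carry the real weight and need spelling out. First, the assertion that the good cubes ``well inside $B_n(\mz)$ form a single connected component'' does not follow merely from every bad island having diameter $\leq\sqrt{n}/(10K)$; you need the standard trick of taking bad islands to be $\ast$-connected so that the outer $\ast$-boundary of a good pocket is itself a single $\ast$-cluster, whence the pocket inherits the smallness bound. Second, the step where you ``follow the infinite cluster out of the bad island into an adjacent good cube $Q_x'$'' is undercooked: the adjacent good cube could sit inside another small good pocket surrounded by further bad cubes, so a single step does not land you in the main good region, and the argument that ``at the moment it enters $Q_x'$ it is in a cluster of diameter $\geq K/10$ inside $(Q_x')^\ast$'' needs justification (the path could dip into $(Q_x')^\ast$ by a short tongue without reaching $Q_x'$). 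Both are fixable --- the infinite path from $x$ must exit $B_n(\mz)$ and must therefore eventually touch the interior $Q'$ of a cube in the main good component; at that moment the path has travelled $\geq K$ inside $(Q')^\ast$, triggering property (c); and the preamble before reaching the main good region is contained in a union of nested small islands and pockets, of total diameter $O(\sqrt{n}/K)$, so the budget of $\sqrt{n}$ is respected --- but these are precisely the sort of topological/nesting arguments that the paper sidesteps by passing to the Cerf--Th\'eret chemical-distance estimate.
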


We will prove this result later; let us first see how it implies Theorem \ref{theo:large clusters}.

\begin{proof}[Proof of Theorem \ref{theo:large clusters} given Claim \ref{claim:connection in boxes}]
	By Theorem \ref{theo:main} we know that we can pick $N\in \N$ large enough so that the kernel $\tilde{J}$ defined by $\tilde{J}(x)=J(x)\mathbbm{1}_{\|x\|_\infty\leq N}$ satisfies $\beta > \beta_c(\tilde{J})$. So in particular there almost surely exists an infinite open cluster using only short edges. Define this cluster as
	\begin{align*}
		K = \left\{x \in \Z^d : x \overset{\leq N}{\longleftrightarrow} \infty  \right\}.
	\end{align*}
	The set $K$ is a random set and a subset of the infinite cluster $\mathcal{C}_\infty$ whose distribution is invariant under translations.
	For a point $y \in \Z^d$, define the random variable $Z_y$ as the distance to the set $K$:
	\begin{align*}
		Z_y= \inf \Big\{m \geq 0 : & \text{ There exist } y_0,y_1,\ldots,y_k \subset B_m(y) \text{ s.t. }\\
		& \ \  \{y_i,y_{i+1}\} \text{ open for $i=0,\ldots,k-1$}, \ y_0 =y, \text{ and } y_k\in K   \Big\} .
	\end{align*}
	So in particular $Z_y=0$ if and only if $y\in K$, and $Z_y < \infty$ if and only if $y\in \cC_\infty$ almost surely, by uniqueness of the infinite cluster. The probability of the event $\{\infty > Z_\mz >\sqrt{n}\}$ converges to $0$ as $n\to \infty$.
	So in particular, by stationarity, 
	\begin{equation*}
		|B_n(\mz)|^{-1} \sum_{y\in B_n(\mz) \cap \mathcal{C}_\infty} \mathbbm{1}_{\{Z_y > \sqrt{n}\}}
	\end{equation*}
	converges to $0$ in expectation, and thus also in probability, as $n\to \infty$. Now fix $\eps > 0$. Assume that
	\begin{align}
		\label{eq:assume1} & |\{x \in B_{n-3\sqrt{n}}(\mz) : x \in \mathcal{C}_\infty\}| \geq (\theta(\beta,J)-\eps) |B_{n-3\sqrt{n}}(\mz)|, \\
		\label{eq:assume2} & |B_n(\mz)|^{-1} \sum_{y\in B_n(\mz) \cap \mathcal{C}_\infty} \mathbbm{1}_{Z_y > \sqrt{n}} \leq \eps, \text{ and that } \\
		\label{eq:assume3} & B_{n-\sqrt{n}}(\mz) \cap K \text{ is connected within $B_n(\mz)$}.
	\end{align}
	All these three events hold with high probability in $n$. The event $\eqref{eq:assume1}$ holds with high probability because of ergodicity, the event \eqref{eq:assume2} holds with high probability as the sum converges to $0$ in probability, and the third event \eqref{eq:assume3} holds with high probability by Claim \ref{claim:connection in boxes}. Thus all three events hold simultaneously with high probability. Let $x,y \in B_{n-3\sqrt{n}}(\mz)$ be such that $Z_x, Z_y \leq \sqrt{n}$. Then there exist $a \in B_{\sqrt{n}}(x) \subset B_{n-\sqrt{n}}(\mz)$ and $b \in B_{\sqrt{n}}(y) \subset B_{n-\sqrt{n}}(\mz)$ such that $a,b \in K$, $a$ and $x$ are connected within $B_{n-\sqrt{n}}(\mz)$, and  $b$ and $y$ are connected within $B_{n-\sqrt{n}}(\mz)$. So if the event in \eqref{eq:assume3} holds, then for all $x,y \in B_{n-3\sqrt{n}}(\mz)$ with $Z_x, Z_y \leq \sqrt{n}$  there exists a path between them that stays entirely within $B_n(\mz)$. So if all three events \eqref{eq:assume1}, \eqref{eq:assume2}, and \eqref{eq:assume3} hold simultaneously, then
	\begin{align*}
		|K_{\max}\left(B_n(\mz)\right)| & \geq |\{x \in B_{n-3\sqrt{n}}(\mz) : x \in \mathcal{C}_\infty, Z_x \leq \sqrt{n}\}| \\
		& \geq (\theta(\beta,J)-\eps) |B_{n-3\sqrt{n}}(\mz)| - \eps |B_n(\mz)| \geq (\theta(\beta,J)-3\eps) |B_{n}(\mz)|
	\end{align*}
	where the last inequality holds for $n$ large enough. This shows Theorem \ref{theo:large clusters}, as all three events \eqref{eq:assume1}, \eqref{eq:assume2}, and \eqref{eq:assume3} hold with high probability in $n$.
\end{proof}

Finally, we prove Claim \ref{claim:connection in boxes}.

\begin{proof}[Proof of Claim \ref{claim:connection in boxes}]
	Define the event $\cG_n$ by
	\begin{align*}
		\cG_n = \bigcap_{x\in B_n(\mz)} \left\{B_{n^{1/4}}(x) \cap \cC_\infty \neq \emptyset\right\} 
		\cap
		\bigcap_{x,y\in B_n(\mz) \cap \cC_\infty } \left\{D(x,y) \leq A_3 (\|x-y\| \vee n^{1/4}) \right\} .
	\end{align*}
	Note that the condition $x,y\in \cC_\infty$ implies that $x\leftrightarrow y$ by uniqueness of the infinite open cluster. So using \eqref{eq:theret} and \eqref{eq:roberto} and a union bound over all possible values of $x,y \in B_n(\mz)$ one sees that
	\begin{align*}
		&\p_\beta \left(\cG_n^c\right) \\ & \leq \sum_{x\in B_n(\mz)} \p_\beta \left(B_{n^{1/4}}(x) \cap \cC_\infty = \emptyset\right)
		+
		\sum_{x,y\in B_n(\mz) } \p_\beta \left(x,y \in \mathcal{C}_\infty, D(x,y) > A_3 (\|x-y\| \vee n^{1/4}) \right)\\
		&
		\leq
		\sum_{x\in B_n(\mz)} C e^{-\lfloor n^{1/4} \rfloor^\eta}
		+
		\sum_{x,y\in B_n(\mz) } A_1 e^{-A_2 \left(\|x-y\| \vee n^{1/4}\right) }
	\end{align*}
	and thus $\p_{\beta}\left(\cG_n\right) \geq 1-\tfrac{1}{n}$ for all large enough $n$. We finish the proof by showing that the event $\cG_n$ implies that $x\overset{B_n(\mz)}{\longleftrightarrow} y$ for all $x,y\in B_{n-\sqrt{n}}(\mz) \cap \cC_\infty$.
	Let $x,y\in B_{n-\sqrt{n}}(\mz) \cap \cC_\infty$. Then we can pick $a_0,a_1,\ldots,a_k \in B_{n-\sqrt{n}}(\mz)$ such that $\|a_i-a_{i-1}\|_{\infty} \leq n^{1/4}$ for all $i=1,\ldots,k$ and $x \in B_{n^{1/4}}(a_0), y \in B_{n^{1/4}}(a_k)$. By the definition of the event $\cG_n$, for all $i\in\{0,\ldots,k\}$ there exists $x_i \in B_{n^{1/4}}(a_i) \cap \cC_\infty$. The Euclidean distance between $x_i$ and $x_{i-1}$ is bounded by
	\begin{equation*}
		\|x_i-x_{i-1}\|\leq \|x_i-a_i\| + \|a_i-a_{i-1}\| + \|a_{i-1}-x_{i-1}\| \leq 3d n^{1/4}
	\end{equation*}
	and thus the graph distance between $x_{i}$ and $x_{i-1}$ is bounded by $A_3 3d n^{1/4}$, by the definition of $\cG_n$. The same holds for the graph distance between $x$ and $x_0$ and the graph distance between $x_k$ and $y$. As $J$ is a kernel with finite range and $a_{i-1}, a_{i} \in B_{n-\sqrt{n}}(\mz)$, the shortest path between $x_{i-1}$ and $x_{i}$ stays inside the box $B_n(\mz)$ for large enough $n$, and the same holds for the shortest path between $x_0$ and $x$, or between $x_k$ and $y$, respectively. Thus we get that 
	\begin{equation*}
		x \overset{B_n(\mz)}{\longleftrightarrow} x_0 \overset{B_n(\mz)}{\longleftrightarrow} x_1
		\overset{B_n(\mz)}{\longleftrightarrow} \ldots 
		\overset{B_n(\mz)}{\longleftrightarrow} x_k
		\overset{B_n(\mz)}{\longleftrightarrow} y
	\end{equation*}
	which shows that $x \overset{B_n(\mz)}{\longleftrightarrow} y$. As $x,y \in B_{n-\sqrt{n}}\cap \cC_\infty$ were arbitrary, this finishes the proof.
\end{proof}

\subsection{Transience of random walks}

Next, we prove transience of the simple random walk on supercritical long-range percolation clusters in dimensions $d\geq 3$. Our main tool here is transience of the simple random walk on finite-range percolation clusters in dimensions $d\geq 3$. This was shown by Grimmett, Kesten, and Zhang for nearest-neighbor percolation \cite{grimmett1993random}. The proof for finite-range percolation works analogous and we will not pursue this here.

\begin{proof}[Proof of Theorem \ref{theo:transience}]
	Let $\beta>\beta_c(J)$ and let $N \in \N$ be large enough such that the kernel $\tilde{J}$ defined by 
	\begin{equation*}
		\tilde{J}(x)= \begin{cases}
			J(x) & \text{ if } \|x\|_\infty \leq N\\
			0 & \text{ else }
		\end{cases}
	\end{equation*}
	satisfies $\beta > \beta_c(\tilde{J})$. Such an $N\in \N$ exists, as the kernel $J$ was assumed to be resilient. We can sample the percolation configuration under the measure $\p_{\beta, J}$ by first sampling the percolation configuration under the measure $\p_{\beta, \tilde{J}}$ and then including the edges $e=\{x,y\}$ with $\|x-y\| > N$ with the corresponding probabilities. As $\tilde{J}$ is a kernel with finite range, the infinite cluster sampled by the measure $\p_{\beta, \tilde{J}}$ is almost surely transient \cite{grimmett1993random}. By Rayleigh's monotonicity principle \cite[Chapter 2.4]{lyons2017probability}, this implies that also the infinite cluster sampled by the percolation configuration $\p_{\beta, J}$ is almost surely transient.
\end{proof}

\section{Varying short edges only}\label{sec:short edges}

In this section, we prove Theorem \ref{theo:p1 larger}. The main tool for this is Proposition \ref{prop:strict ineq}, which we prove in section \ref{sec:strict}. After this, we conclude with the proof of Theorem \ref{theo:p1 larger} in section \ref{sec:p1}.

\subsection{Strict inequality of critical points}\label{sec:strict}

In this section, we prove Proposition \ref{prop:strict ineq}. 
In order to prove the strict inequality of critical points, we use the technique of enhancements developed by Aizenman and Grimmett \cite{aizenman1991strict}. The main item to prove here is the differential inequality \eqref{eq:inequality strict}.
For an integrable and translation-invariant kernel $J$ and $\beta,s \geq 0$, we define the combined measure $\p_{\beta,s,J}$ as the measure of independent bond percolation where an edge $\{x,y\}$ is open with probability
\begin{equation*}
	\p_{\beta,s,J} \left(\{x,y\} \text{ open}\right) = p(\beta,s,\{x,y\}) = 
	\begin{cases}
		1-\exp \left(-\beta J(\{x,y\}) - s \right) & \text{ if } \|x-y\| = 1\\
		1-\exp \left(-\beta J(\{x,y\})     \right) & \text{ otherwise } 
	\end{cases}.
\end{equation*}

\begin{proposition}\label{propo:strict}
	For every kernel $J$ satisfying condition \eqref{eq:comparability}, there exists a continuous function $g: \R_{>0} \times \R_{>0} \to \R_{>0}$ and $N \in \N$ such that for all $\beta,s>0$ and all $n \geq N$
	\begin{equation}\label{eq:inequality strict}
		\frac{\md}{\md \beta} \p_{\beta,s,J} \left(\mz \leftrightarrow B_n(\mz)^c \right)
		\leq g(\beta,s)
		\frac{\md}{\md s} \p_{\beta,s,J} \left(\mz \leftrightarrow B_n(\mz)^c \right).
	\end{equation}
\end{proposition}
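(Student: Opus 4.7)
The plan is to combine Russo's formula with a local finite-energy modification in the spirit of Aizenman--Grimmett \cite{aizenman1991strict}. Writing $\pi_e(\beta,s) = 1 - \exp(-\beta J(e) - s\mathbbm{1}_{|e|=1})$ and applying Russo's formula to the event $\{\mz \leftrightarrow B_n(\mz)^C\}$,
$$\frac{\md}{\md\beta}\p_{\beta,s,J}(\mz\leftrightarrow B_n(\mz)^C) = \sum_e J(e)\,e^{-\beta J(e)-s\mathbbm{1}_{|e|=1}}\,\p_{\beta,s,J}(e \text{ pivotal}),$$
$$\frac{\md}{\md s}\p_{\beta,s,J}(\mz\leftrightarrow B_n(\mz)^C) = \sum_{|e|=1}e^{-\beta J(e)-s}\,\p_{\beta,s,J}(e \text{ pivotal}).$$
Because $J$ is symmetric, $J(e)$ takes a common value $J_1$ on all nearest-neighbor edges, so the nearest-neighbor part of $\md\p/\md\beta$ equals $J_1 \cdot \md\p/\md s$. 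It therefore suffices to bound the long-range contribution $\Sigma_{\mathrm{long}} \coloneqq \sum_{|e|>1}J(e)e^{-\beta J(e)}\p_{\beta,s,J}(e \text{ pivotal})$ by a continuous multiple of $\md\p/\md s$.

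For each long edge $e=\{x,y\}$ I would pick a unit vector $u=u(e)$ for which $x+u$ is strictly closer to $y$ in $\ell^\infty$, and set $e'(e) \coloneqq \{x,x+u\}$ (nearest-neighbor) and $\tilde{e}(e) \coloneqq \{x+u,y\}$ (a shorter long edge). The technical core is the pointwise bound
$$\p_{\beta,s,J}(e \text{ pivotal}) \leq C(\beta,s)\,\p_{\beta,s,J}(e'(e) \text{ pivotal})$$
with $C(\beta,s)$ continuous and independent of $e$. I would prove this through a local modification $T_e$ acting inside a fixed-radius ball around $x+u$: close $e$, open $e'(e)$ and $\tilde{e}(e)$, and close every other short edge incident to $x+u$ inside the ball. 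The first two changes route the jump through $e'(e)$ and $\tilde{e}(e)$ so that $\{\mz\leftrightarrow B_n(\mz)^C\}$ is preserved in $T_e(\omega)$; the additional closures cut off alternative paths into $x+u$, forcing $e'(e)$ to be pivotal in $T_e(\omega)$. The Radon--Nikodym derivative of $T_e(\omega)$ with respect to $\omega$ factors over the toggled edges, and its two long-edge factors $(1-\pi_e)/\pi_e \asymp 1/(\beta J(e))$ and $\pi_{\tilde{e}(e)}/(1-\pi_{\tilde{e}(e)}) \asymp \beta J(\tilde{e}(e))$ cancel because $J(e) \asymp J(\tilde{e}(e))$ by the comparability assumption \eqref{eq:comparability}; the remaining factors come from a bounded family of short edges and depend only on $\beta$, $s$, and the constants $a, A$.

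Summing the pointwise bound and exchanging the order of summation,
$$\Sigma_{\mathrm{long}} \leq C(\beta,s)\sum_{|e'|=1}\p(e' \text{ pivotal})\sum_{\substack{|e|>1\\e'(e)=e'}}J(e)e^{-\beta J(e)} \leq C(\beta,s)\|J\|_1 \sum_{|e'|=1}\p(e' \text{ pivotal}),$$
where the inner sum, running over long edges incident to the fixed endpoint of $e'$ in the fixed direction $u$, is bounded by $\|J\|_1<\infty$ by integrability of $J$. Multiplying by $e^{\beta J_1+s}$ to rewrite the right-hand side as the summand of $\md\p/\md s$ yields \eqref{eq:inequality strict} with $g(\beta,s) = J_1 + C(\beta,s)\|J\|_1 e^{\beta J_1+s}$, which is continuous on $\R_{>0}\times\R_{>0}$. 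The main obstacle is the rigorous construction of $T_e$ and the verification that $e'(e)$ is genuinely pivotal in $T_e(\omega)$: preexisting open paths from $\mz$ into $x+u$ that bypass $e'(e)$ must be killed by the short-edge closures inside the modification ball, and long edges incident to $x+u$ escaping that ball require a separate argument (relying on the integrability of $J$) to show they contribute a negligible extra probability. The comparability condition \eqref{eq:comparability} is precisely what makes the Radon--Nikodym derivative uniformly bounded over all long $e$, while the hypothesis $n\geq N$ is used so that the fixed-radius modification ball fits inside $B_n(\mz)$.
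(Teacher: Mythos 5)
Your overall strategy coincides with the paper's: Russo's formula in both $\beta$ and $s$ reduces the differential inequality to comparing the $J(e)$-weighted sum of pivotal probabilities over all edges with the unweighted sum over nearest-neighbor edges, and the comparison is made by an Aizenman--Grimmett-type local rerouting, with the comparability condition \eqref{eq:comparability} controlling the Radon--Nikodym factors attached to the two long edges that get swapped. The structural difference lies in how the sum over long edges is collapsed. You keep a per-edge bound $\p_{\beta,s,J}(e\text{ pivotal}) \leq C(\beta,s)\,\p_{\beta,s,J}(e'(e)\text{ pivotal})$ and then absorb the multiplicity of the map $e\mapsto e'(e)$ by the integrability $\sum_e J(e) < \infty$, incurring a factor $\|J\|_1$. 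The paper instead reroutes to the more specific event $\{\{x,x-e_1\}\text{ and }\{x-e_1,y\}\text{ both open and pivotal}\}$, which \emph{tracks} the far endpoint $y$, and then observes that these events are pairwise disjoint over $y$ (at the vertex $x-e_1$ there can be at most one other open pivotal edge once $\{x,x-e_1\}$ is open and pivotal); the disjointness collapses the sum to $\p_{\beta,s,J}(\{x,x-e_1\}\text{ open and pivotal})$ with no $\|J\|_1$ factor at all. Both collapses are valid; the paper's is sharper but yours suffices since integrability is assumed.

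The one place where your write-up is genuinely thinner than it needs to be, and where you yourself flag the difficulty, is the pointwise bound $\p(e\text{ pivotal}) \leq C\,\p(e'(e)\text{ pivotal})$. Turning $e$ pivotal into $e'(e)$ pivotal via a \emph{bounded-radius} modification $T_e$ cannot be a deterministic surjection $\{e\text{ pivotal}\}\to\{e'(e)\text{ pivotal}\}$, because whether $e'(e)$ is pivotal in $T_e(\omega)$ depends on arbitrarily long open edges incident to $x+u$, which a fixed-radius modification never touches. This is why the paper's target event makes the accounting cleaner: by comparing to ``$\{x,x-e_1\}$ and $\{x-e_1,y\}$ both open and pivotal,'' the conclusion already encodes that $x-e_1$ sits on a cut, so the rerouting only needs to arrange for that cut structure and the disjointness step does the rest. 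In either version, to make the modification rigorous one should restrict to the positive-probability sub-event that the rerouting vertex carries no open long-range edges before modifying, and control the conditional probability of that sub-event given the configuration away from that vertex; this is a standard but non-trivial piece of the Aizenman--Grimmett machinery that you correctly identify but do not carry out. On balance your proposal is a correct outline along essentially the same lines as the paper, with a slightly cruder final step and the same implicit reliance on the standard local-modification lemma.
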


Assuming this proposition, we can directly prove Proposition \ref{prop:strict ineq}.

\begin{proof}[Proof of Proposition \ref{prop:strict ineq} assuming Proposition \ref{propo:strict}]
	Let $J$ be a kernel and assume that $0 < \beta_c(J) < \infty$. Let $K \in \N$ be large enough so that $\frac{1}{K} < \frac{\beta_c(J)}{4}$. Let $M$ be a large enough constant so that $g(\beta,s) \leq M$ for all $\beta \in \left[\tfrac{1}{2} \beta_c(J), 2 \beta_c(J)\right]$ and $s \in \left[\frac{1}{K}, \beta_c(J)\right]$. For abbreviation, we write $\beta_c=\beta_c(J)$ in the rest of the proof. Let $\eps \in (0,0.1)$ be small enough so that
	\begin{equation*}
		2\eps M < \frac{\beta_c}{4} \text{ and } \eps < \frac{\beta_c}{4}.
	\end{equation*}
	For $r \in \left[0,2\eps\right]$ define 
	\begin{equation*}
		\text{$\beta(r) \coloneqq \beta_c + \eps - r$ and $s(r) \coloneqq \frac{1}{K} + rM$.}
	\end{equation*}
	So in particular 
	\begin{equation*}
		\left(\beta(r),s(r)\right)\in \left[\frac{\beta_c}{2},2\beta_c\right] \times \left[\frac{1}{K},\frac{1}{K}+2\eps M\right] \subset \left[\frac{\beta_c}{2},2\beta_c\right] \times \left[\frac{1}{K},\beta_c(J)\right] \text{ for all } r\in \left[0,2\eps\right],
	\end{equation*}
	and thus $g(\beta(r),s(r))\leq M$ for all $r\in \left[0,2\eps\right]$. Differentiating yields that
	\begin{align*}
		& \frac{\md}{\md r} \p_{\beta(r), s(r), J} \left(\mz \leftrightarrow B_n(\mz)^c \right)\\ & 
		=
		- \frac{\md}{\md \beta(r)} \p_{\beta(r), s(r), J} \left(\mz \leftrightarrow B_n(\mz)^c \right) + M \frac{\md}{\md s(r)} \p_{\beta(r), s(r), J} \left(\mz \leftrightarrow B_n(\mz)^c \right) 
		\\
		&
		\geq
		- \frac{\md}{\md \beta(r)} \p_{\beta(r), s(r), J} \left(\mz \leftrightarrow B_n(\mz)^c \right) + g(\beta(r),s(r)) \frac{\md}{\md s(r)} \p_{\beta(r), s(r), J} \left(\mz \leftrightarrow B_n(\mz)^c \right) 
		\geq 0
	\end{align*}
	for $n$ large enough. Thus
	\begin{align*}
		& \p_{\beta_c-\eps, \frac{1}{K} + 2\eps M, J} \left(\mz \leftrightarrow B_n(\mz)^c \right)
		=
		\p_{\beta(2\eps), s(2\eps), J} \left(\mz \leftrightarrow B_n(\mz)^c \right)
		\geq 
		\p_{\beta(0), s(0), J} \left(\mz \leftrightarrow B_n(\mz)^c \right)\\
		& \ \ = \p_{\beta_c+\eps, \frac{1}{K}, J} \left(\mz \leftrightarrow B_n(\mz)^c \right)
		\geq
		\p_{\beta_c+\eps,0, J} \left(\mz \leftrightarrow B_n(\mz)^c \right)>0
	\end{align*}
	for $n$ large enough. Taking $n\to \infty$ shows that $\p_{\beta_c-\eps, \frac{1}{K} + 2\eps M, J} \left(\mz \leftrightarrow \infty\right) > 0$. Finally, we will prove that $\p_{\beta_c-\eps, \overline{J}}$ stochastically dominates $\p_{\beta_c-\eps, \frac{1}{K} + 2\eps M, J}$, i.e., 
	\begin{equation}\label{eq:stochastic domination}
		\p_{\beta_c-\eps, \frac{1}{K} + 2\eps M, J} \lesssim \p_{\beta_c-\eps, \overline{J}} ,
	\end{equation}
	which implies that $\p_{\beta_c-\eps, \overline{J}} \left(\mz \leftrightarrow \infty\right) > 0$ and thus $\beta_{c}(\overline{J}) \leq  \beta_c - \eps < \beta_c(J)$. In order to show \eqref{eq:stochastic domination}, we just need to show that for each edge the marginal probability of being open under the measure $\p_{\beta_c-\eps, \overline{J}}$ is at least the marginal probability of being open under the measure $\p_{\beta_c-\eps, \frac{1}{K} + 2\eps M, J}$. This is clear for edges $\{x,y\}$ with $\|x-y\|>1$, since $J(x-y)=\overline{J}(x-y)$ for such edges, and thus they have the same probability of being open under both measures. For edges $\{x,y\}$ with $\|x-y\|=1$, we need to show that
	\begin{equation*}
		(\beta_c-\eps)J(\{x,y\}) + \frac{1}{K} + 2\eps M \leq (\beta_c-\eps)\overline{J}(\{x,y\}),
	\end{equation*}
	which is true, since
	\begin{align*}
		&(\beta_c-\eps)J(\{x,y\}) + \frac{1}{K} + 2\eps M \leq (\beta_c-\eps)\overline{J}(\{x,y\})
		\Leftrightarrow
		\frac{1}{K} + 2\eps M \leq \beta_c-\eps\\
		&
		\Leftrightarrow
		\frac{1}{K} + 2\eps M + \eps \leq \beta_c
	\end{align*}
	and the last line follows from the assumptions on $K$ and $\eps$, as $\frac{1}{K} , 2\eps M , \eps \leq \tfrac{\beta_c}{4}$.
\end{proof}

\begin{proof}[Proof of Proposition \ref{propo:strict}]
	For an edge $e=\{x,y\}$, we write $p(\beta,s,e)$ for the probability that this edge is open under the measure $\p_{\beta,s,J}$. We write $E_n$ for the set of edges with at least one endpoint in $B_n(\mz)$, and we write $E_n^s$ for the set of edges $\left\{\{x,y\} \in E_n : \|x-y\| = 1\right\}$, i.e., the {\sl short} edges. We define the event $A_n=\{\mz \leftrightarrow B_n(\mz)^c\}$. Using Russo's formula, respectively a straight-forward modification for long-range percolation, and applying it for the two derivatives in \eqref{eq:inequality strict}, we need to show that there exists a continuous function $g$ such that for $n$ large enough
	\begin{align}
		&\notag \sum_{e\in E_n} \p_{\beta,s,J} \left(e \text{ is pivotal for the event } A_n\right) \frac{\md}{\md \beta} p(\beta,s,e)\\
		& \hspace{15mm} \label{eq:strict g(beta,s) 1} \leq g(\beta,s)
		\sum_{e\in E_n} \p_{\beta,s,J} \left(e \text{ is pivotal for the event } A_n\right) \frac{\md}{\md s} p(\beta,s,e).
	\end{align}
	From the definition of $p(\beta,s,e)$, we see that $\frac{\md}{\md \beta} p(\beta,s,e)$ is of order $J(e)$, whereas $\frac{\md}{\md s} p(\beta,s,e)$ is $0$ for $e \notin E_n^s$ and of constant order for $e\in E_n^s$.
	Thus inequality \eqref{eq:strict g(beta,s) 1} holds, provided we can show that there exists a continuous function $\overline{g}:\R_{>0} \times \R_{>0} \to \R_{>0}$ so that
	\begin{align}\label{eq:strict g(beta,s) 2}
		\sum_{e\in E_n} J(e) \p_{\beta,s,J} \left(e \text{ is pivotal for } A_n\right) 
		\leq \overline{g}(\beta,s)
		\sum_{e\in E_n^s} \p_{\beta,s,J} \left(e \text{ is pivotal for } A_n\right).
	\end{align}
	So we need to study the probability that edges are pivotal for the event $A_n$ for both short and long edges. We write $x \geq \mz$ if all coordinates of $x$ are non-negative. For fixed $\beta>0$, the probability that an edge $e$ is open is proportional to $J(e)$. Thus there exist constants $C_1,C_2 < \infty$ depending on the kernel $J$ and, in a continuous way, on the parameters $\beta$ and $s$ so that
	\begin{align}
		& \notag \sum_{e\in E_n} J(e) \p_{\beta,s,J} \left(e \text{ is pivotal for } A_n\right) \leq C_1
		\sum_{e\in E_n} \p_{\beta,s,J} \left(e \text{ is open and pivotal for } A_n\right) \\
		& \hspace{22mm} \notag
		\leq C_1
		\sum_{x\in B_n(\mz)} \ \sum_{y \in \Z^d\setminus\{x\}}
		\p_{\beta,s,J} \left(\{x,y\} \text{ is open and pivotal for } A_n\right) \\
		& \hspace{22mm} \leq  \label{eq:general}
		C_2
		\sum_{x\in B_n(\mz): x \geq \mz} \ \sum_{y \in \Z^d\setminus\{x\}}
		\p_{\beta,s,J} \left(\{x,y\} \text{ is open and pivotal for } A_n\right) .
	\end{align}
	The last inequality follows by symmetry of the model and the symmetry of the event $A_n$. We only restrict to $x\geq \mz$ as we want $x - e_1$ to be well-defined inside the box $B_n(\mz)$ in the following.
	
	For long enough edges $\{x,y\}$, the probability $\p_{\beta,s,J}\left(\{x,y\} \text{ open}\right)$ is of the same order as the probability $\p_{\beta,s,J}\left(\{x-e_1,y\} \text{ open}\right)$, by condition \eqref{eq:comparability}. So for long enough edges $\{x,y\}$ we can `reroute' the edge $\{x,y\}$ to start at $x-e_1$ instead of $x$. The probability of the corresponding new event only differs by a constant multiplicative factor. For short edges, we can replace the open edge $\{x,y\}$ with two edges $\{x-e_1,u\}$ and $\{u,y\}$ for some $u \in B_n(\mz)$ with $\|u-x\|=\mathcal{O}(1)$ and $J(\{x-e_1,u\}), J(\{u,y\}) > 0$. Using such local modifications, we see that there exists a constant $C_3$ (that does not depend on $n$) such that for $n$ large enough and for all $x\in B_n(\mz)$ with $x\geq \mz$ one has
	\begin{align}\label{eq:local modify}
		\notag & \sum_{y \in \Z^d\setminus\{x\}} \p_{\beta,s,J} \left(\{x,y\} \text{ is open and pivotal for } A_n\right)\\
		& \hspace{7mm}
		\leq
		C_3 \sum_{y \in \Z^d\setminus\{x\}} \p_{\beta,s,J} \left(\{x,x-e_1\} \text{ and } \{x-e_1,y\} \text{ are both open and pivotal for } A_n\right).
	\end{align} 
	Next, we argue that
	\begin{multline}
		 \sum_{y \in \Z^d\setminus\{x\}} \p_{\beta,s,J} \left(\{x,x-e_1\} \text{ and } \{x-e_1,y\} \text{ are open and pivotal for } A_n\right)\\
		 \leq \label{eq:argue to show}
		\p_{\beta,s,J} \left(\{x,x-e_1\} \text{ is open and pivotal for } A_n\right).
	\end{multline} 
	To show inequality \eqref{eq:argue to show}, first note that the events of the form 
	\begin{equation*}
		\Big\{\{x,x-e_1\} \text{ and } \{x-e_1,y\} \text{ are open and pivotal for } A_n\Big\}
	\end{equation*}
	are disjoint for distinct $y \in \Z^d\setminus \{x\}$. This holds, as there can never be three or more open edges with $x-e_1$ as an endpoint that are pivotal for a connection event like $A_n$. Thus we get that 
	\begin{align*}
		& \sum_{y \in \Z^d\setminus\{x\}} \p_{\beta,s,J} \big(\{x,x-e_1\} \text{ and } \{x-e_1,y\} \text{ are both open and pivotal for } A_n\big)\\
		& \hspace{8mm}
		=
		\p_{\beta,s,J} \left( \bigcup_{y \in \Z^d\setminus\{x\}} \big\{\{x,x-e_1\} \text{ and } \{x-e_1,y\} \text{ are both open and pivotal for } A_n\big\}\right)\\
		& \hspace{8mm}
		\leq
		\p_{\beta,s,J} \left( \{x,x-e_1\} \text{ open and pivotal for } A_n \right).
	\end{align*}
	This shows \eqref{eq:argue to show}. Inserting inequalities \eqref{eq:local modify} and \eqref{eq:argue to show} into \eqref{eq:general}, we get that
	\begin{align*}
		&\sum_{e\in E_n} J(e) \p_{\beta,s,J} \left(e \text{ is pivotal for } A_n\right) \\
		& \hspace{22mm}
		\overset{\eqref{eq:general}}{\leq} C_2
		\sum_{x\in B_n(\mz): x \geq \mz} \ \sum_{y \in \Z^d\setminus\{x\}}
		\p_{\beta,s,J} \left(\{x,y\} \text{ is open and pivotal for } A_n\right)\\
		& \hspace{22mm}
		\overset{\eqref{eq:local modify}, \eqref{eq:argue to show}}{\leq} C_2 C_3
		\sum_{x\in B_n(\mz): x \geq \mz} 
		\p_{\beta,s,J} \left(\{x,x-e_1\} \text{ is open and pivotal for } A_n\right)\\
		& \hspace{22mm}
		\leq
		 C_2 C_3
		\sum_{e \in E_n^s} 
		\p_{\beta,s,J} \left(e \text{ is pivotal for } A_n\right)
	\end{align*}
	which finishes the proof of \eqref{eq:strict g(beta,s) 2} and thus the proof of Proposition \ref{prop:strict ineq}.
\end{proof}

\subsection{The proof of Theorem \ref{theo:p1 larger}}\label{sec:p1}

We are left to prove Theorem \ref{theo:p1 larger}. This follows from the strict inequality of critical points, Proposition \ref{prop:strict ineq}, as we show in the following.

\begin{proof}[Proof of Theorem \ref{theo:p1 larger}]
	Let $f:\Z^d \to \left[0,1\right)$ and let $p \in \left(p_c(f),1\right)$. Define $\bar{p} = \frac{p+p_c(f)}{2} \in \left(p_c(f),p\right)$. Choose $\beta > 0$ so that
	\begin{equation}\label{eq:betadef}
		e^{-\beta} = \frac{1-p}{1-\bar{p}}
	\end{equation}
	which is possible since $1-\bar{p} > 1-p$. Define a kernel $J : \Z^d \setminus \{\mz\}\to \left[0,\infty\right]$ by
	\begin{align*}
		1-e^{-\beta J(x)} = \begin{cases}
			\bar{p} & \text{ if } \|x\| = 1\\
			f(x) & \text{ if } \|x\| > 1
		\end{cases}.
	\end{align*}
	Thus we get that the two measures $\p_{\beta,J}$ and $\p_{\bar{p},f}$ agree. As there is an infinite open cluster under the measure $\p_{\bar{p},f}=\p_{\beta,J}$, this directly implies that $\beta \geq \beta_c(J)$. Define the kernel $\overline{J}$ by 
	\begin{align*}
		\overline{J}(x) = \begin{cases}
			J(x) +1 & \text{ if } \|x\| = 1\\
			J(x) & \text{ else }
		\end{cases}.
	\end{align*}
	By construction we have for all edges $\{x,y\}$ with $\|x-y\|> 1$ that
	\begin{equation*}
		\p_{\beta,\overline{J}}(\{x,y\} \text{ closed})
		=
		\p_{\beta,J}(\{x,y\} \text{ closed})
		=
		1-f(x-y)
		=
		\p_{p,f}(\{x,y\} \text{ closed}) .
	\end{equation*}
	For nearest-neighbor edges $\{x,y\}$ with $\|x-y\|=1$ we have by the definition of $\beta$ \eqref{eq:betadef} that
	\begin{equation*}
		\p_{\beta,\overline{J}}(\{x,y\} \text{ closed}) = e^{-\beta\overline{J}(x-y)} = e^{-\beta J(x-y)} e^{-\beta} = (1-\bar{p}) \frac{1-p}{1-\bar{p}} = \p_{p,f}(\{x,y\} \text{ closed})
	\end{equation*}
	and thus the two measures $\p_{p,f}$ and $\p_{\beta,\overline{J}}$ agree. As $f(x) \simeq \|x\|^{-s}$ for some $s>d$ by assumption \eqref{eq:fnecessary}, this directly implies that condition \eqref{eq:comparability} is satisfied. Thus we can apply Proposition \ref{prop:strict ineq} for the kernel $J$, and in particular, we get that	
	\begin{equation*}
		\beta \geq \beta_c(J) > \beta_c(\overline{J}).
	\end{equation*}
	So the measure $\p_{p,f}$ equals the measure $\p_{\beta,\overline{J}}$, which is a measure for supercritical long-range percolation on $\Z^d$. From here, one can easily verify that the different results stated in Theorem \ref{theo:p1 larger} hold for the percolation configuration sampled from the measure $\p_{p,f}$.
\end{proof}


\begin{thebibliography}{10}
	
	\footnotesize
	
	\bibitem{aizenman1986discontinuity}
	Michael Aizenman and Charles~M. Newman.
	\newblock Discontinuity of the percolation density in one-dimensional $1/|x-y|^2$ percolation models.
	\newblock {\em Communications in Mathematical Physics}, 107(4) (1986) 611--647.	
	
	\bibitem{aizenman1987uniqueness}
	Michael Aizenman, Harry Kesten, and Charles~M. Newman.
	\newblock Uniqueness of the infinite cluster and continuity of connectivity
	functions for short and long range percolation.
	\newblock {\em Communications in Mathematical Physics}, 111(4):505--531, 1987.
	
	\bibitem{aizenman1991strict}
	Michael Aizenman and Geoffrey Grimmett.
	\newblock Strict monotonicity for critical points in percolation and
	ferromagnetic models.
	\newblock {\em Journal of Statistical Physics}, 63:817--835, 1991.
	
	\bibitem{antal1996chemical}
	Peter Antal and Agoston Pisztora.
	\newblock On the chemical distance for supercritical bernoulli percolation.
	\newblock {\em The Annals of Probability}, 24(2):1036--1048, 1996.
	
	\bibitem{baumler2022behavior}
	Johannes B{\"a}umler.
	\newblock Behavior of the distance exponent for $1/\|x-y\|^{2d}$ long-range
	percolation.
	\newblock {\em arXiv preprint arXiv:2208.04793}, 2022.
	
	\bibitem{baumler2022distances}
	Johannes B{\"a}umler.
	\newblock Distances in $1/\|x-y\|^{2d}$ percolation models for all dimensions.
	\newblock {{\em Commun. Math. Phys. (2023)} https://doi.org/10.1007/s00220-023-04861-z}.
	
	 
	
	\bibitem{baumler2023recurrence}
	Johannes B{\"a}umler.
	\newblock Recurrence and transience of symmetric random walks with long-range
	jumps.
	\newblock {\em Electronic Journal of Probability}, 28:1--24, 2023.
	
	\bibitem{baumler2024truncation}
	Johannes B{\"a}umler.
	\newblock Truncation of long-range percolation with non-summable interactions
	in dimensions $d\geq 3$.
	\newblock {\em arXiv preprint arXiv:2410.00303}, 2024.
	
	\bibitem{baumler2023polynomial}
	Johannes B{\"a}umler.
	\newblock The polynomial growth of the infinite long-range percolation cluster.
	\newblock {\em arXiv preprint arXiv:2311.14352}, 2023.
	
	\bibitem{barsky1991percolation}
	David~J. Barsky, Geoffrey~R. Grimmett, and Charles~M. Newman.
	\newblock Percolation in half-spaces: equality of critical densities and
	continuity of the percolation probability.
	\newblock {\em Probability Theory and Related Fields}, 90(1):111--148, 1991.
	
	\bibitem{benjamini2001diameter}
	Itai Benjamini and Noam Berger.
	\newblock The diameter of long-range percolation clusters on finite cycles.
	\newblock {\em Random Structures \& Algorithms}, 19(2):102--111, 2001.
	
	\bibitem{benjamini2011geometry}
	Itai Benjamini, Harry Kesten, Yuval Peres, and Oded Schramm.
	Geometry of the uniform spanning forest: transitions in dimensions 4,
	8, 12,….
	In {\em Selected Works of Oded Schramm}, pages 751--777. Springer,
	2011.
	
	\bibitem{berger2002transience}
	Noam Berger.
	\newblock Transience, recurrence and critical behavior for long-range
	percolation.
	\newblock {\em Communications in mathematical physics}, 226(3):531--558, 2002.
	
	\bibitem{berger2004lower}
	Noam Berger.
	\newblock A lower bound for the chemical distance in sparse long-range
	percolation models.
	\newblock {\em arXiv preprint math/0409021}, 2004.
	
	\bibitem{biskup2004scaling}
	Marek Biskup.
	\newblock On the scaling of the chemical distance in long-range percolation
	models.
	\newblock {\em The Annals of Probability}, 32(4):2938--2977, 2004.
	
	\bibitem{biskup2011graph}
	Marek Biskup.
	\newblock Graph diameter in long-range percolation.
	\newblock {\em Random Structures \& Algorithms}, 39(2):210--227, 2011.
	
	\bibitem{biskup2019sharp}
	Marek Biskup and Jeffrey Lin.
	\newblock Sharp asymptotic for the chemical distance in long-range percolation.
	\newblock {\em Random Structures \& Algorithms}, 55(3):560--583, 2019.
	
	\bibitem{biskup2024arithmetic}
	Marek Biskup and Andrew Krieger.
	\newblock Arithmetic oscillations of the chemical distance in long-range
	percolation on z d.
	\newblock {\em The Annals of Applied Probability}, 34(3):2986--3017, 2024.
	
	\bibitem{burton1989density}
	Robert~M. Burton and Michael Keane.
	\newblock Density and uniqueness in percolation.
	\newblock {\em Communications in mathematical physics}, 121:501--505, 1989.
	
	\bibitem{cerf2016weak}
	Rapha{\"e}l Cerf and Marie Th{\'e}ret.
	\newblock {Weak shape theorem in first passage percolation with infinite
		passage times}.
	\newblock {\em Annales de l'Institut Henri Poincaré, Probabilités et
		Statistiques}, 52(3):1351 -- 1381, 2016.
	
	\bibitem{chayes1987bernoulli}
	Jennifer~T. Chayes, Lincoln Chayes, and Charles~M Newman.
	\newblock Bernoulli percolation above threshold: an invasion percolation
	analysis.
	\newblock {\em The Annals of Probability}, pages 1272--1287, 1987.
	
	\bibitem{contreras2021supercritical}
	Daniel Contreras, S{\'e}bastien Martineau, and Vincent Tassion.
	\newblock Supercritical percolation on graphs of polynomial growth.
	\newblock {\em arXiv preprint arXiv:2107.06326}, 2021.
	
	\bibitem{contreras2023locality}
	Daniel Contreras, S{\'e}bastien Martineau, and Vincent Tassion.
	\newblock Locality of percolation for graphs with polynomial growth.
	\newblock {\em Electronic Communications in Probability}, 28:1--9, 2023.
	
	\bibitem{coppersmith2002diameter}
	Don Coppersmith, David Gamarnik, and Maxim Sviridenko.
	\newblock The diameter of a long-range percolation graph.
	\newblock In {\em Mathematics and computer science II}, pages 147--159.
	Springer, 2002.
	
	\bibitem{dembin2022almost}
	Barbara Dembin and Vincent Tassion.
	\newblock Almost sharp sharpness for poisson boolean percolation.
	\newblock {\em arXiv preprint arXiv:2209.00999}, 2022.
	
	\bibitem{deuschel1996surface}
	Jean-Dominique Deuschel and Agoston Pisztora.
	\newblock Surface order large deviations for high-density percolation.
	\newblock {\em Probability Theory and Related Fields}, 104(4):467--482, 1996.
	
	\bibitem{ding2013distances}
	Jian Ding and Allan Sly.
	Distances in critical long range percolation.
	{\em arXiv preprint arXiv:1303.3995}, 2013.
	
	\bibitem{ding2026uniqueness}
	Jian Ding, Zherui Fan, and Lu-Jing Huang.
	\newblock Uniqueness of the critical long-range percolation metrics.
	\newblock 2026.
	
	\bibitem{duminil2016new}
	Hugo Duminil-Copin and Vincent Tassion.
	\newblock A new proof of the sharpness of the phase transition for bernoulli
	percolation and the ising model.
	\newblock {\em Communications in Mathematical Physics}, 343(2) (2016) 725--745.
	
	\bibitem{duminil2017new}
	Hugo Duminil-Copin and Vincent Tassion.
	\newblock A new proof of the sharpness of the phase transition for bernoulli
	percolation on $\mathbb{Z}^d$.
	\newblock {\em L’Enseignement math{\'e}matique}, 62(1) (2017) 199--206.
	
	\bibitem{duminil2020long}
	Hugo Duminil-Copin, Christophe Garban, and Vincent Tassion.
	\newblock Long-range models in 1d revisited.
	\newblock {\em arXiv preprint arXiv:2011.04642}, 2020.
	
	\bibitem{durrett1988large}
	Richard Durrett and Roberto~H. Schonmann.
	\newblock Large deviations for the contact process and two dimensional
	percolation.
	\newblock {\em Probability theory and related fields}, 77(4):583--603, 1988.
	
	\bibitem{easo2023critical}
	Philip Easo and Tom Hutchcroft.
	\newblock The critical percolation probability is local.
	\newblock {\em arXiv preprint arXiv:2310.10983}, 2023.
	
	\bibitem{friedli2004longrange}
	Sacha Friedli, Bernardo N. B. de~Lima, and Vladas Sidoravicius.
	\newblock {On Long Range Percolation with Heavy Tails}.
	\newblock {\em Electronic Communications in Probability}, 9:175 -- 177,
	2004.
	
	\bibitem{friedli2006truncation}
	Sacha~Friedli and Bernardo N. B. De~Lima.
	\newblock On the truncation of systems with non-summable interactions.
	\newblock {\em Journal of statistical physics}, 122(6):1215--1236, 2006.
	
	\bibitem{garet2004asymptotic}
	Olivier Garet and R{\'e}gine Marchand.
	\newblock Asymptotic shape for the chemical distance and first-passage
	percolation on the infinite bernoulli cluster.
	\newblock {\em ESAIM: Probability and Statistics}, 8:169--199, 2004.
			
	\bibitem{grimmett1990supercritical}
	Geoffrey~R. Grimmett and John~M. Marstrand.
	\newblock The supercritical phase of percolation is well behaved.
	\newblock {\em Proceedings of the Royal Society of London. Series A:
		Mathematical and Physical Sciences}, 430(1879):439--457, 1990.
		
	\bibitem{grimmett1993random}
	Geoffrey~R. Grimmett, Harry Kesten, and Yu~Zhang.
	\newblock Random walk on the infinite cluster of the percolation model.
	\newblock {\em Probability Theory and Related Fields}, 96:33--44, 1993.
	
	\bibitem{grimmett1999percolation}
	Geoffrey~R. Grimmett.
	\newblock Percolation, volume 321 of.
	\newblock {\em Grundlehren der Mathematischen Wissenschaften [Fundamental
		Principles of Mathematical Sciences]}, 1999.
	
	\bibitem{heydenreich2017progress}
	Markus Heydenreich and Remco Van~der Hofstad.
	\newblock {\em Progress in high-dimensional percolation and random graphs}.
	\newblock Springer, 2017.
	
	\bibitem{heydenreich2017structures}
	Markus Heydenreich, Tim Hulshof, and Joost Jorritsma.
	\newblock {Structures in supercritical scale-free percolation}.
	\newblock {\em The Annals of Applied Probability}, 27(4):2569 -- 2604, 2017.
	
	\bibitem{kesten1990probability}
	Harry Kesten and Yu~Zhang.
	\newblock The probability of a large finite cluster in supercritical bernoulli
	percolation.
	\newblock {\em The Annals of Probability}, pages 537--555, 1990.
	
	\bibitem{kesten2003first}
	Harry Kesten.
	\newblock First-passage percolation.
	\newblock {\em From classical to modern probability: CIMPA Summer School 2001},
	pages 93--143, 2003.
	
	\bibitem{kingman1973subadditive}
	John Kingman.
	\newblock Subadditive ergodic theory.
	\newblock {\em The Annals of Probability},
	1(6):883–899, 1973.
	
	\bibitem{luchtrath2026all}
	Lukas L{\"u}chtrath.
	\newblock All spatial random graphs with weak long-range effects have chemical
	distance comparable to euclidean distance.
	\newblock {\em Journal of Theoretical Probability}, 39(1):12, 2026.
	
	\bibitem{lyons2017probability}
	Russell Lyons and Yuval Peres.
	\newblock {\em Probability on trees and networks}, volume~42.
	\newblock Cambridge University Press, 2017.
	
	\bibitem{meester1996continuity}
	Ronald Meester and Jeffrey~E. Steif.
	\newblock On the continuity of the critical value for long range percolation in
	the exponential case.
	\newblock {\em Communications in mathematical physics}, 180(2):483--504, 1996.
	
	\bibitem{menshikov2001note}
	Mikhail~Menshikov, Vladas~Sidoravicius, and Marina~Vachkovskaia.
	\newblock A note on two-dimensional truncated long-range percolation.
	\newblock {\em Advances in Applied Probability}, 33(4):912--929, 2001.
	
	\bibitem{monch2023inhomogeneous}
	Christian M{\"o}nch.
	\newblock Inhomogeneous long-range percolation in the weak decay regime.
	\newblock {\em arXiv preprint arXiv:2303.02027}, 2023.
	
	\bibitem{newman1986one}
	Charles~M. Newman and Lawrence~S. Schulman.
	\newblock One dimensional $1/|j-i|^s$ percolation models: The existence of a
	transition for $s\leq 2$.
	\newblock {\em Communications in Mathematical Physics}, 104(4):547--571, 1986.
	
	\bibitem{penrose1996large}
	Mathew~D. Penrose and Agoston Pisztora.
	\newblock Large deviations for discrete and continuous percolation.
	\newblock {\em Advances in applied probability}, 28(1):29--52, 1996.
	
	\bibitem{pisztora1996surface}
	Agoston Pisztora.
	\newblock Surface order large deviations for ising, potts and percolation
	models.
	\newblock {\em Probability Theory and Related Fields}, 104(4):427--466, 1996.
	
	\bibitem{sidoravicius1999truncated}
	Vladas~Sidoravicius, Donatas~Surgailis, and Maria E.~Vares.
	\newblock On the truncated anisotropic long-range percolation on $\Z^2$.
	\newblock {\em Stochastic processes and their applications}, 81(2):337--349,
	1999.
	
	\bibitem{schulman1983long}
	Lawrence~S. Schulman.
	\newblock Long range percolation in one dimension.
	\newblock {\em Journal of Physics A: Mathematical and General}, 16(17):L639,
	1983.
	
	\bibitem{sonmez2022random}
	Ercan S{\"o}nmez and Arnaud Rousselle.
	\newblock Random walk on the random connection model.
	\newblock {\em Indagationes Mathematicae}, 33(5):1049--1060, 2022.
	

	
	
\end{thebibliography}
\end{document}